\theoremstyle{plain}
\newtheorem{theorem}{Theorem}[section]
\newtheorem*{theorem*}{Theorem}
\newtheorem{proposition}[theorem]{Proposition}
\newtheorem{corollary}[theorem]{Corollary}
\newtheorem{lemma}[theorem]{Lemma}
\theoremstyle{definition}
\newtheorem{definition}[theorem]{Definition}
\newtheorem{remark}[theorem]{Remark}
\newcommand{\enm}[1]{\ensuremath{#1}}          %
\newcommand{\op}[1]{\operatorname{#1}}
\newcommand{\cal}[1]{\mathcal{#1}}
\newcommand{\CC}{\enm{\mathbb{C}}}
\newcommand{\ZZ}{\enm{\mathbb{Z}}}
\newcommand{\KK}{\enm{\mathbb{K}}}
\newcommand{\PP}{\enm{\mathbb{P}}}
\newcommand{\Aa}{\enm{\cal{A}}}
\newcommand{\Dd}{\enm{\cal{D}}}
\newcommand{\Ee}{\enm{\cal{E}}}
\newcommand{\Ff}{\enm{\cal{F}}}
\newcommand{\Gg}{\enm{\cal{G}}}
\newcommand{\Hh}{\enm{\cal{H}}}
\newcommand{\Ii}{\enm{\cal{I}}}
\newcommand{\Nn}{\enm{\cal{N}}}
\newcommand{\Oo}{\enm{\cal{O}}}
\renewcommand{\phi}{\varphi}
\renewcommand{\theta}{\vartheta}
\renewcommand{\epsilon}{\varepsilon}
\newcommand{\Ext}{\op{Ext}}
\renewcommand{\to}[1][]{\xrightarrow{\ #1\ }}
\newcommand{\old}[1]{}
\begin{document}

%\layout
\title[A Torelli type problem for logarithmic bundles]{A Torelli type problem for logarithmic bundles over projective varieties}
\author{E. Ballico, S. Huh and F. Malaspina}
\address{Universit\`a di Trento, 38123 Povo (TN), Italy}
\email{edoardo.ballico@unitn.it}
\address{Sungkyunkwan University, 300 Cheoncheon-dong, Suwon 440-746, Korea}
\email{sukmoonh@skku.edu}
\address{Politecnico di Torino, Corso Duca degli Abruzzi 24, 10129 Torino, Italy}
\email{francesco.malaspina@polito.it}
\keywords{Hypersurface arrangement, Logarithmic bundle, Torelli theorem, Quadric hypersurface, Multiprojective space}
\thanks{The second author is supported by Basic Science Research Program 2010-0009195 through NRF funded by MEST.}
\subjclass[2010]{Primary: {14J60}; Secondary: {14F05, 14C34}}

\begin{abstract}
We investigate the logarithmic bundles associated to arrangements of hypersurfaces with a fixed degree in a smooth projective variety. We then specialize to the case when the variety is a quadric hypersurface and a multiprojective space to prove a Torelli type theorem in some cases.
\end{abstract}

\maketitle
%\tableofcontents
\section{Introduction}
Let $\Dd=\{D_1,\cdots, D_s\}$ be an arrangement of smooth hypersurfaces on a nonsingular variety $X$. It defines the sheaf $\Omega_X^1 (\log \Dd)$ of differential $1$-forms with logarithmic poles along $\Dd$. This sheaf was originally introduced by Deligne in \cite{De} for an arrangement with normal crossings to define a mixed Hodge structure on $X\setminus \Dd$. As a special case this sheaf turns out to be locally free when $\Dd$ has simple normal crossings, and it is called a {\it logarithmic bundle}.
A natural question regarding the logarithmic bundle is to determine whether the map
$$\Phi : \Dd \mapsto \Omega_X^1 (\log \Dd)$$
is injective, i.e. can we recover $\Dd$ from its logarithmic bundle?

The answer to this question was given by Dolgachev-Kapranov \cite{DK} for hyperplane arrangements in $X=\PP^n$, the projective spaces. They proved that any arrangements of $s$ hyperplanes of $\PP^n$ determine the same logarithmic bundle if $s\leq n+2$, and that the map $\Phi$ is generically injective if $s\geq 2n+3$. Later Vall\`es filled the gap in \cite{Valles}, proving that $\Phi$ is generically injective if $s\geq n+3$. His method was to recover the hyperplane arrangement $\Dd$ as the set of unstable hyperplanes of the bundle. These results were generalized in \cite{D} and \cite{FMV} to the case of arrangements without normal crossings, dealing with the Torelli type problem.

Recently there have been several works to deal with arrangements of smooth hypersurfaces of degree at least $2$. In \cite{uy} the map $\Phi$ is injective if $\Dd=\{D\}$ consists of a single smooth divisor in $\PP^n$ and $D$ is not of Sebastiani-Thom type, i.e. the defining equation $f$ of $D$ cannot be represented as the sum
$$f(x_0, \ldots, x_n)=f_1(x_0, \ldots, x_l)+f_2(x_{l+1}, \ldots, x_n)$$
for any choice of a homogeneous coordinate $(x_0, \ldots, x_n)$ of $\PP^n$ and $0\leq l \leq n-1$. Another attempt by Angelini in \cite{Angelini} is on arrangements of hypersurfaces of degree $d$ in $\PP^n$ and she investigate the generic injectivity of $\Phi$.

In this article we ask the same question mainly over quadric hypersurfaces and multiprojective spaces. Firstly we adopt the argument of \cite{Angelini} in a general setting to prove the following (see Proposition \ref{f2}):
\begin{theorem}
For arrangements of $s$ hyperplane sections in a smooth projective variety $X\subset \PP^r$, the map $\Phi$ is generically injective if $s\geq r+3$.
\end{theorem}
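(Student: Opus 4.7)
The strategy I would pursue is to adapt the \emph{unstable hyperplane section} method of Vall\`es and Angelini to the embedded setting $X\subset\PP^r$. Setting $E:=\Omega^1_X(\log\Dd)$, call a hyperplane $H\in(\PP^r)^*$ unstable for $E$ when $D:=X\cap H$ is smooth, meets every $D_i$ transversely, and the integer $h^0(D,E|_D)$ strictly exceeds the value attained on a generic hyperplane. The resulting locus $W(E)\subset(\PP^r)^*$ is intrinsic to the bundle $E$ by upper semicontinuity of cohomology, so showing $W(E)=\{H_1,\dots,H_s\}$ for a generic $\Dd$ reconstructs $\Dd$ from $E$ and yields the generic injectivity of $\Phi$.

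The key computational input is the residue exact sequence
\[
0\to \Omega^1_X\to E\to \bigoplus_{i=1}^s \Oo_{D_i}\to 0,
\]
which, by transversality, restricts to a short exact sequence on a generic $D$. Taking global sections and using connectedness of the intersections $D_i\cap D$ (one assumes $\dim X\geq 3$; the surface case is handled separately) produces a common generic value $c$ for $h^0(D,E|_D)$, arising from $s$ rank-one contributions modulo the boundary into $H^1(D,\Omega^1_X|_D)$. For $H=H_j$ the Poincar\'e residue yields the standard splitting $E|_{D_j}\cong \Omega^1_{D_j}(\log \Dd_j)\oplus \Oo_{D_j}$ with $\Dd_j=\{D_i\cap D_j\}_{i\neq j}$, whose extra trivial summand contributes one additional global section, so that $h^0(D_j,E|_{D_j})=c+1$. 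Thus each $H_i$ lies in $W(E)$, giving the forward inclusion.

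For the reverse inclusion I would follow \cite{Angelini} and pass to the incidence variety
\[
\Sigma=\bigl\{(\Dd,H)\,:\,H\in W(\Omega^1_X(\log\Dd))\bigr\}\subset \mathrm{Arr}_s\times (\PP^r)^*,
\]
where $\mathrm{Arr}_s$ parametrises simple-normal-crossing arrangements of $s$ smooth hyperplane sections of $X$. The $s$ tautological components $\Dd\mapsto H_i$ dominate $\mathrm{Arr}_s$, and generic injectivity of $\Phi$ is equivalent to these being the only dominating components. The numerical hypothesis $s\geq r+3$ is precisely what forces the fibre of $\Sigma\to(\PP^r)^*$ over a fixed $H$ to have codimension at least $r+2$ in $\mathrm{Arr}_s$, preventing any extra component from dominating, in direct analogy with the $s\geq n+3$ bound of \cite{Valles} in $\PP^n$. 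The main obstacle is this final dimension estimate: one must rule out that a positive-dimensional family of arrangements $\Dd$ acquires an extraneous unstable hyperplane, which reduces to a careful upper-semicontinuity analysis of $h^0(D,E|_D)$ under simultaneous deformation of the pair $(\Dd,H)$, together with control of the cohomology groups $H^i(X,\Omega^1_X(-1))$ that replace the vanishings automatic in the $\PP^n$ case.
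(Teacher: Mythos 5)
Your proposal reproduces the easy half of the argument but leaves the hard half unproved. The paper's actual proof (Proposition \ref{f2} and Corollary \ref{k2}) works with the condition $H^0(\Omega_X^1(\log\Dd)^\vee_{\vert_D})\ne 0$ and, for the crucial reverse inclusion, does \emph{not} attempt a fresh dimension count on $X$: it transfers the problem to $\PP^r$. Concretely, if $D=X\cap H$ is unstable for $\Omega_X^1(\log\Dd)$, the normal bundle sequence $0\to\Omega_X^1(\log\Dd)^\vee\to\Omega_{\PP^r}^1(\log\Hh_{\Dd})^\vee_{\vert_X}\to\Nn\to 0$ (Dolgachev, Prop.~2.11) gives $H^0(\Omega_{\PP^r}^1(\log\Hh_{\Dd})^\vee_{\vert_D})\ne 0$; then the Steiner resolution of $\Omega_{\PP^r}^1(\log\Hh_{\Dd})$ (valid since $m\ge r+3$), dualized, restricted to $H$ and twisted by $\Ii_{D,H}$, together with $h^1(\Ii_{D,H})=0$ and $h^0(\Ii_{D,H}(1))=0$ ($D$ connected and spanning $H$), yields $H^0(\Omega_{\PP^r}^1(\log\Hh_{\Dd})^\vee_{\vert_H})\ne 0$, i.e.\ $H$ is an unstable hyperplane of $\Omega_{\PP^r}^1(\log\Hh_{\Dd})$; finally Vall\`es' theorem, applicable because for a general (hence tame) arrangement no rational normal curve osculates all the $H_i$, forces $H\in\Hh_{\Dd}$ and so $D\in\Dd$. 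None of this appears in your sketch.

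In its place you propose an incidence-variety/dimension estimate on $X$ itself, and you candidly label it ``the main obstacle'': the assertion that $s\ge r+3$ ``forces the fibre to have codimension at least $r+2$'' is stated, not proved, and it is not a routine semicontinuity computation --- already in $\PP^n$ the bound $n+3$ in Vall\`es' theorem comes from the structure theory of Steiner bundles and the exceptional osculating hyperplanes of rational normal curves, an exception your proposal never confronts. So the reverse inclusion, which is the entire content of the theorem, is missing. Two further points: your notion of unstable hyperplane (a jump of $h^0(D,E_{\vert_D})$ for $E$ itself) differs from the one used here ($H^0(E^\vee_{\vert_D})\ne 0$), so even a completed version of your argument could not simply quote the existing results; and your claimed splitting $E_{\vert_{D_j}}\cong\Omega^1_{D_j}(\log\Dd_j)\oplus\Oo_{D_j}$ is unjustified --- in general the residue extension $0\to\Omega^1_{D_j}(\log\Dd_j)\to E_{\vert_{D_j}}\to\Oo_{D_j}\to 0$ has nontrivial class, and the paper needs only the (much weaker) surjection $E_{\vert_{D_i}}\to\Oo_{D_i}$ of Lemma \ref{f1} to get the forward inclusion.
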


Based on this result, we continue to investigate the logarithmic bundles on a $n$-dimensional smooth quadric hypersurface $Q_n$. We derived in Proposition \ref{acm} that no logarithmic bundle on $Q_n$ is arithmetically Cohen-Macaulay if $n\geq 3$ and proved in Corollary \ref{cor5} that the logarithmic bundle $\Omega_{Q_n}^1 (\log D)$ associated to a single hyperplane section $D$ is simply the pull-back of the tangent bundle on $\PP^n$ twisted by $\Oo_{\PP^n}(-2)$ under the linear projection with the apolar point to $D$ with respect to $Q_n$. It enables us to obtain the following statement (see Corollary \ref{f3} and \ref{cor5}):
\begin{theorem}
For arrangemenets of $s$ hyperplane sections in $Q_n$, the map $\Phi$ is generically injective if $s=1$ or $s\geq n+4$.
\end{theorem}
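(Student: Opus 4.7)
The statement splits into the cases $s\geq n+4$ and $s=1$, and I would handle them independently. The first case is immediate from Proposition~\ref{f2}: a smooth quadric $Q_n$ sits in $\PP^{r}$ with $r=n+1$, so the threshold $s\geq r+3$ specializes to precisely $s\geq n+4$, and generic injectivity of $\Phi$ on arrangements of hyperplane sections is inherited with no further argument.

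The case $s=1$ is the substantive one, and I would base the argument on Corollary~\ref{cor5}, which identifies
$$\Omega_{Q_n}^1(\log D)\ \cong\ \pi_D^{\ast}\, T_{\PP^n}(-2),$$
where $\pi_D:Q_n\to\PP^n$ is the degree-two linear projection from the apolar point $p_D\in\PP^{n+1}\setminus Q_n$. Since $D=H_D\cap Q_n$ where $H_D$ is the polar hyperplane of $p_D$ with respect to $Q_n$, it is enough to recover $p_D$ canonically from the isomorphism class of $E:=\Omega_{Q_n}^1(\log D)$. Pulling back the twisted Euler sequence on $\PP^n$ yields a resolution
$$0\to\Oo_{Q_n}(-2)\to\Oo_{Q_n}(-1)^{n+1}\to E\to 0,$$
from which a short cohomology computation gives $h^0(E(1))=n+1$ and shows that $E(1)$ is globally generated.

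The canonical evaluation $\op{ev}:H^0(E(1))\tensor\Oo_{Q_n}\onto E(1)$ is therefore surjective, with kernel a line bundle of degree $-1$, necessarily $\Oo_{Q_n}(-1)$. Dualizing $\op{ev}$ produces a canonical surjection $H^0(E(1))^\vee\tensor\Oo_{Q_n}\onto\Oo_{Q_n}(1)$, whose space of global sections embeds $H^0(E(1))^\vee$ as an $(n+1)$-dimensional subspace
$$V_E\ \subset\ H^0(\Oo_{Q_n}(1)) = H^0(\Oo_{\PP^{n+1}}(1)).$$
Unwinding the Euler pull-back identifies $V_E$ with the subspace of linear forms on $\PP^{n+1}$ that vanish at $p_D$, so the common zero locus of $V_E$ in $\PP^{n+1}$ is exactly $\{p_D\}$.

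The step I expect to require the most care is the canonicity of this construction: via a diagram chase through the dualized evaluation sequence, any isomorphism $E\cong E'=\Omega_{Q_n}^1(\log D')$ must identify $V_E$ with $V_{E'}$ as the \emph{same} subspace of $H^0(\Oo_{Q_n}(1))$, forcing $p_D=p_{D'}$ and therefore $D=D'$. A harmless preliminary observation is that a generic hyperplane section $D$ satisfies $p_D\notin Q_n$, which is exactly the condition for $D$ to be smooth and for the pull-back description of Corollary~\ref{cor5} to apply.
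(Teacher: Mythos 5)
Your proposal is correct, and its decomposition is the paper's own (Proposition \ref{f2} for $s\geq n+4$, Corollary \ref{cor5} for $s=1$); the genuine difference is how you get injectivity once $\Omega_Q^1(\log D)\cong \phi_{P}^{*}T\PP^n(-2)$ is known. The paper shows the projection center is an isomorphism invariant via Lemma \ref{lem5}: restrict to smooth conics and compare splitting types $(1,1,0,\dots,0)$ versus $(2,0,\dots,0)$ according to whether the center lies in the plane of the conic. You instead recover the center cohomologically: from $0\to\Oo_Q(-2)\to\Oo_Q(-1)^{\oplus(n+1)}\to E\to 0$ you get $h^0(E(1))=n+1$, identify the evaluation map of $E(1)$ with the pulled-back Euler quotient, dualize, and extract the canonical subspace $V_E\subset H^0(\Oo_Q(1))=H^0(\Oo_{\PP^{n+1}}(1))$ of linear forms through the center, which any isomorphism $E\cong E'$ must preserve (the only ambiguity in trivializing the kernel line is a scalar). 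This is a clean, more intrinsic substitute for Lemma \ref{lem5}; note you still inherit the harder input, Proposition \ref{v1}, through Corollary \ref{cor5}, since identifying the center as the apolar point of $H_D$ is what turns ``same center'' into ``same divisor.'' Two small points to tighten: for $s\geq n+4$ one formally needs the short step of Corollary \ref{k2} (a general $\Dd$ is tame, no rational normal curve osculates all the $H_i$, and Lemma \ref{f1} applies to $\Dd'$), so it is not literally ``no further argument''; and your embedding $H^0(E(1))^\vee\hookrightarrow H^0(\Oo_Q(1))$ uses $h^0(E(1)^\vee)=h^0(TQ(-\log D)(-1))=0$, which is true (e.g. from $h^0(TQ(-1))=0$, or from $\phi_{P*}\Oo_Q\cong\Oo_{\PP^n}\oplus\Oo_{\PP^n}(-1)$ and Bott vanishing) but should be stated, as should the fact that on $Q_2$ ``degree $-1$'' alone does not determine the kernel: there it is $\Oo_Q(-1)$ because it is literally the subsheaf in the pulled-back Euler sequence.
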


The gap with $2\leq s \leq n+3$ remains open and we get a negative answer to the case of $s=2$ in Proposition \ref{2conics}, when the arrangements are over a quadric surface $Q_2$.

In the last section we deal with the logarithmic bundles associated to a smooth hypersurface of a multiprojective space and prove the following (see Theorem \ref{u2}):
\begin{theorem}
For $X=\PP^{n_1}\times \cdots \times \PP^{n_s}$ with $s\geq 2$ and $\mathbf{a}\in \ZZ^{\oplus s}$, the map $\Phi$ is generically injective if $\Dd=\{D\}$ with $D\in |\Oo_X(\mathbf{a})|$ and $\mathbf{a} \geq (3,\cdots, 3)$, i.e. $a_i\geq 3$ for all $i$.
\end{theorem}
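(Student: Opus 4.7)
My plan is to recover the divisor $D$ from the isomorphism class of $V := \Omega^1_X(\log D)$ for $D$ in a dense open subset of $|\Oo_X(\mathbf{a})|$. Since $V$ fits into the residue sequence
\begin{equation*}
0 \lra \Omega^1_X \lra V \lra \Oo_D \lra 0,
\end{equation*}
$D$ is identified as the support of the cokernel of the inclusion $\Omega^1_X \hookrightarrow V$ provided this inclusion is unique up to $\Aut(\Omega^1_X)$. Thus the goal reduces to showing that $\Hom_X(\Omega^1_X, V)$ has the minimum possible dimension $\dim \End_X(\Omega^1_X)$; when this holds, every injection $\Omega^1_X \hookrightarrow V$ is a scalar on each K\"unneth summand of $\Omega^1_X$, hence has the same image subsheaf as the residue inclusion, and $V/\Omega^1_X \cong \Oo_D$ then recovers $D$, yielding generic injectivity of $\Phi$.

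Applying $\Hom_X(\Omega^1_X, -)$ to the residue sequence yields
\begin{equation*}
0 \lra \End_X(\Omega^1_X) \lra \Hom_X(\Omega^1_X, V) \lra \Hom_X(\Omega^1_X, \Oo_D) \stackrel{\delta}{\lra} \Ext^1_X(\Omega^1_X, \Omega^1_X).
\end{equation*}
The K\"unneth decomposition $\Omega^1_X = \bigoplus_{i=1}^s \pi_i^\ast \Omega^1_{\PP^{n_i}}$ and the simplicity of each $\Omega^1_{\PP^{n_i}}$ give $\End_X(\Omega^1_X) \cong \CC^s$, $\Hom_X(\Omega^1_X, \Oo_D) \cong H^0(D, T_X|_D)$, and $\Ext^1_X(\Omega^1_X, \Omega^1_X) \cong \bigoplus_{i \neq j} \sl_{n_i+1}$, which is nonzero as soon as $s \geq 2$. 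The task is therefore to prove that the connecting map $\delta$ is injective for generic $D$.

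The map $\delta$ is Yoneda product with the extension class $\xi \in \Ext^1_X(\Oo_D, \Omega^1_X)$ of the residue sequence, which, through the Koszul resolution $0 \to \Oo_X(-D) \to \Oo_X \to \Oo_D \to 0$, is essentially multiplication by the defining section of $D$. The hypothesis $\mathbf{a} \geq (3,\ldots,3)$ enters here: via K\"unneth, it forces $H^0(T_X(-D)) = 0$ and low-degree vanishings of $H^p(T_X(-D))$, which together with the rigidity $H^1(T_X) = 0$ give a controlled expression for $H^0(D, T_X|_D)$ as an extension of $H^1(T_X(-D))$ by $H^0(T_X)$; simultaneously, the positivity of each $a_i$ makes the components of $\xi$ along the K\"unneth decomposition of $\Ext^1_X(\Omega^1_X, \Omega^1_X)$ sufficiently non-degenerate. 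I plan to verify the injectivity of $\delta$ at one explicit $D_0$ by reducing, via K\"unneth, to an injectivity statement on each $\sl_{n_i+1}$-summand, and then to propagate to a dense open by upper semicontinuity of $\dim \ker \delta$ on $|\Oo_X(\mathbf{a})|$. The main obstacle is this explicit verification at $D_0$: one must track $\Hom_X(\Omega^1_X, \Oo_{D_0})$ against the cup-product pairing with $\xi$, and the argument is expected to break for $a_i \leq 2$ in some factor, consistent with the sharpness of the hypothesis $\mathbf{a} \geq (3, \ldots, 3)$.
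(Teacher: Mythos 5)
Your route is genuinely different from the paper's, and its framework is sound as far as it goes: if $\dim\Hom_X(\Omega^1_X,\Omega^1_X(\log D))=s=\dim\End_X(\Omega^1_X)$ for generic $D$, then the subsheaf $\Omega^1_X\subset\Omega^1_X(\log D)$ is intrinsic, its cokernel $\Oo_D$ recovers $D$, and generic injectivity follows; your identifications of $\End_X(\Omega^1_X)$, of $\Hom_X(\Omega^1_X,\Oo_D)\cong H^0(D,T_X{\vert_D})$ and of $\Ext^1_X(\Omega^1_X,\Omega^1_X)$ via K\"unneth are also correct. The problem is that the entire content of the theorem is then concentrated in the injectivity of the connecting map $\delta$ for generic $D$, and this you do not prove: you name no explicit $D_0$, you do not compute the cup product with the extension class $\xi$, and the decisive point is left as ``sufficiently non-degenerate''. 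Note how little room there is. For $s=2$ and $\mathbf{a}\geq (2,2)$ one has $h^0(T_X(-\mathbf{a}))=h^1(T_X(-\mathbf{a}))=0$ and $h^1(T_X)=0$, so $h^0(D,T_X{\vert_D})=h^0(T_X)=\sum_i\bigl((n_i+1)^2-1\bigr)$, which equals $\dim\Ext^1_X(\Omega^1_X,\Omega^1_X)$; hence $\delta$ would have to be an isomorphism, a sharp nondegeneracy statement that does not follow from the vanishings you list. Moreover it is expected to fail precisely on split-type divisors such as $V(x_0^ay_0^b+x_1^ay_1^b)$ on $\PP^1\times\PP^1$ (where extra morphisms $\Omega^1_X\to\Omega^1_X(\log D)$ should appear), so verifying it at even one divisor is an honest computation comparable in weight to the theorem itself; as written the proposal is a program, not a proof. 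A secondary, fixable point: the semicontinuity step needs the sheaves $\Omega^1_X(\log D)$ to form a flat family over the smooth locus of $|\Oo_X(\mathbf{a})|$, which you should at least remark.

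For comparison, the paper sidesteps this cohomological nondegeneracy entirely. Proposition \ref{u1} recovers the partial Jacobian spaces $J_i(f)$ from $TX(-\log D)$ by looking at divisors $E$ of type $\mathbf{a}-\mathbf{i}$ whose restriction map $\phi_E$ jumps (using only $h^1(TX(-\mathbf{a}))=0$), and the theorem is then a polynomial-level Torelli statement: restricting to general fibers $\PP^{n_1}\times\{P\}$, a generic equation is determined by its Jacobian data because it is not of Sebastiani--Thom type (\cite{uy} plus a dimension count), while the case of all $n_i=1$ is handled by the explicit split-type classification of Theorem \ref{mq}. If you push your approach through, you will in effect be reproving an equivalent nondegeneracy whose failure locus is again the split-type equations, so Theorem \ref{mq} is the natural benchmark; but until the injectivity of $\delta$ at a concrete $D_0$ is actually established, there is a genuine gap.
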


It is a generalization of the result in \cite{uy}, where the same result was proven in the case of $s=1$.

%%%%%%%%%%%%%%%%%%%%%%%%%%%%%%%%%
\section{Preliminaries}
Throughout the article, our base field $\KK$ is algebraically closed with characteristic $0$.

Let $V$ be an ($n+1$)-dimensional vector space over $\KK$ and $\PP^n=\PP V$ be the projective space parametrizing $1$-dimensional subspaces of $V$. Then a multiprojective space is defined to be $\PP^{n_1} \times \cdots \times \PP^{n_s}$ for some $(n_1, \ldots, n_s)\in (\ZZ_{\geq 1})^{\oplus s}$ where we have $\PP^{n_i}=\PP V_i$ with ($n_i+1$)-dimensional vector spaces $V_i$. We will simply denote it by $\PP^{\mathbf{n}}$ where $\mathbf{n}=(n_1, \ldots, n_s)$. Then it is embedded into $\PP U$ by the Segre embedding where $U=\otimes V_i$. Letting its projection to $i$-th factor by $f_i$, we will denote $f_1^* \Oo_{\PP^{n_1}}(a_1) \otimes \cdots \otimes f_s^* \Oo_{\PP^{n_s}}(a_s)$ by $\Oo_{\PP^{\mathbf{n}}}(a_1, \ldots, a_s)$, or simply by $\Oo_{\PP^{\mathbf{n}}} (\mathbf{a})$ where $\mathbf{a}=(a_1, \ldots, a_s)\in \ZZ^{\oplus s}$. For simplicity in notation, we denote $\Oo_{\PP^{\mathbf{n}}} (a, \ldots, a)$ by $\Oo_{\PP^{\mathbf{n}}}(a)$. For a coherent sheaf $\Ee$ on $\PP^{\mathbf{n}}$ we will denote $\Ee \otimes \Oo_{\PP^{\mathbf{n}}}(\mathbf{a})$ by $\Ee (\mathbf{a})$. We also consider an ordering on $\ZZ^{\oplus s}$ defined as follows:
$$\mathbf{a}\geq \mathbf{b} \Leftrightarrow a_i\geq b_i \text{ for all } 1\leq i\leq s.$$

Now an $n$-dimensional smooth quadric hypersurface $Q=Q_n$ is defined to be the zero set $V(f)$ in $\PP^{n+1}$ where $f$ is a quadratic polynomial in ($n+2$) variables whose partial derivatives do not vanish simultaneously. A smooth quadric surface is also a particular case of multiprojective space and so let us summarize some information about them for later use.

For a coherent sheaf $\Ee$ of rank $r$ on $Q_2$ with the Chern classes $c_1=(a,b)\in \ZZ^{\oplus 2}$ and $c_2=c\in \ZZ$, we have :
\begin{align*}
c_1(\Ee (s,t))&=(a+rs,b+rt),\\
c_2(\Ee(s,t)) &=c+(r-1)(at+bs)+2st{r \choose 2},\\
\chi (\Ee)&=(a+1)(b+1)+r-c-1
\end{align*}
for $(s,t)\in \ZZ^{\oplus 2}$.

In general, for a coherent sheaf $\Ee$ on a smooth projective variety $X$, we denote the dual sheaf of $\Ee$ by $\Ee^{\vee}$ and its cohomology group by $H^i(X, \Ee)$, or simply by $H^i(\Ee)$ if there is no confusion. We also denote its dimension as a vector space over $\KK$ by $h^i(X, \Ee)$ or $h^i (\Ee)$.

Now let us collect some definitions and well-known results about the logarithmic bundles on a smooth projective variety $X$.

\begin{definition}
An {\it arrangement} on $X$ is defined to be a set $\Dd = \{ D_1, \cdots, D_m\}$ of smooth irreducible divisors of $X$ such that $D_i \ne D_j$ for $i\ne j$. To an arrangement $\Dd$ on $X$, we can associate the {\it logarithmic sheaf} $\Omega_X^1(\log \Dd)$, the sheaf of differential $1$-forms with logarithmic poles along $\Dd$.
\end{definition}

If $\Dd$ has simple normal crossings, its logarithmic sheaf is known to be locally free and so it can be called to be the logarithmic bundle. It admits the residue exact sequence
\begin{equation}\label{seq1}
0\to \Omega_X^1 \to \Omega_X^1(\log \Dd ) \stackrel{\textrm{res}}{\to} \bigoplus {\epsilon_i}_* \Oo_{D_i} \to 0
\end{equation}
where $\epsilon_i : D_i \to X$ is the embedding and the map $\textrm{res}$ is the Poincar\'e residue morphism.

\begin{remark}
The dual of a logarithmic bundle $\Omega_X^1 (\log \Dd)$ is the sheaf of logarithmic vector fields along $\Dd$, denoted by $TX(-\log \Dd)$ (see \cite{D}). It admits the exact sequence
$$0\to TX(-\log \Dd) \to TX \to \oplus {\epsilon_i}_*\Oo_{D_i}(D_i) \to 0$$
where $TX$ is the tangent bundle of $X$.
\end{remark}

The logarithmic bundles of hyperplane arrangements on projective spaces have been investigated by many authors and below we state some results of them. Conventionally, we will denote the hyperplane arrangement on $\PP^n$ by $\Hh$.
\begin{theorem}\cite{DK}\label{DKthm}
Let $\Hh=\{H_1, \cdots, H_m\}$ be a hyperplane arrangement on $\PP^n$ with simple normal crossings. Then we have
$$\Omega_{\PP^n}^1(\log \Hh) \cong
\left\{
\begin{array}{ll}
\Oo_{\PP^n}^{\oplus (m-1)} \oplus \Oo_{\PP^n}(-1)^{\oplus (n-m+1)} &\hbox{ if $1\leq m \leq n+1$ } \\
T\PP^n(-1) &\hbox{ if $m=n+2$ }
 \end{array}
 \right.$$
\end{theorem}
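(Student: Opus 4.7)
My plan is to treat the two cases separately, each time constructing explicit global sections of $\Omega^1_{\PP^n}(\log\Hh)$ and identifying the resulting bundle map via the residue sequence \eqref{seq1} and the Euler sequence on $\PP^n$.

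\textbf{Case $1\le m\le n+1$.} I begin with the toric base case $m=n+1$. After a change of coordinates I may assume $H_i=V(x_{i-1})$ for $i=1,\dots,n+1$, so that $\Hh$ is the toric boundary of $\PP^n$. The $n$ global logarithmic forms $\omega_j=d\log(x_j/x_0)$, $j=1,\dots,n$, are well-defined sections of $\Omega^1_{\PP^n}(\log\Hh)$, and a chart-by-chart computation on each affine piece $\{x_i\neq 0\}$ shows that they form an $\Oo$-basis at every point of $\PP^n$, giving $\Omega^1_{\PP^n}(\log\Hh)\cong\Oo_{\PP^n}^{\oplus n}$. For $m<n+1$, I extend the defining forms $l_1,\dots,l_m$ to a basis $l_1,\dots,l_{n+1}$ of the space of linear forms and form the toric completion $\Hh'=\Hh\cup\{H_{m+1},\dots,H_{n+1}\}$ with $H_k=V(l_k)$; the residue sequence at the extra hyperplanes gives
$$0\to\Omega^1_{\PP^n}(\log\Hh)\to\Omega^1_{\PP^n}(\log\Hh')\to\bigoplus_{k=m+1}^{n+1}\Oo_{H_k}\to 0.$$
Using the toric frame $\omega_j=d\log(l_j/l_{n+1})$ to identify $\Omega^1_{\PP^n}(\log\Hh')\cong\Oo^{\oplus n}$, the residue map becomes $(a_1,\dots,a_n)\mapsto(a_{m+1}|_{H_{m+1}},\dots,a_n|_{H_n},-(\sum_j a_j)|_{H_{n+1}})$. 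The vanishing conditions parametrize its kernel by $a_1,\dots,a_{m-1}\in\Oo$ (free), elements $b_k\in\Oo(-1)$ with $a_k=l_kb_k$ for $k=m+1,\dots,n$, and $c\in\Oo(-1)$ with $\sum_j a_j=l_{n+1}c$ (which fixes $a_m$), yielding $\Omega^1_{\PP^n}(\log\Hh)\cong\Oo^{\oplus(m-1)}\oplus\Oo(-1)^{\oplus(n-m+1)}$.

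\textbf{Case $m=n+2$.} I collect the $n+1$ global sections $\omega_i=d\log(l_i/l_1)$, $i=2,\dots,n+2$, into a map $\phi\colon\Oo^{\oplus(n+1)}\to\Omega^1_{\PP^n}(\log\Hh)$. The $n+2$ linear forms $l_i$ on an $(n+1)$-dimensional vector space admit a unique (up to scale) linear relation $\sum_{i=1}^{n+2}c_il_i=0$ with all $c_i\neq 0$ by general position; a direct computation identifies $\ker\phi$ with the image of $\Oo(-1)\hookrightarrow\Oo^{\oplus(n+1)}$ given by $\mu\mapsto(\mu c_2l_2,\dots,\mu c_{n+2}l_{n+2})$, which is nowhere zero on $\PP^n$ (the common zero locus $H_2\cap\cdots\cap H_{n+2}$ is empty by general position). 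Thus $\ker\phi$ is a sub-bundle and $\op{Im}(\phi)$ is a rank-$n$ vector bundle. Compare with the twisted Euler sequence $0\to\Oo(-1)\to\Oo^{\oplus(n+1)}\to T\PP^n(-1)\to 0$ whose left arrow is $(x_0,\dots,x_n)$: both $(x_0,\dots,x_n)$ and $(c_2l_2,\dots,c_{n+2}l_{n+2})$ are $(n+1)$-tuples of linearly independent linear forms, so the two inclusions differ by an element of $GL_{n+1}(\KK)$ acting on $\Oo^{\oplus(n+1)}$, and the snake lemma yields $\op{Im}(\phi)\cong T\PP^n(-1)$. Finally, $\op{Im}(\phi)\subset\Omega^1_{\PP^n}(\log\Hh)$ is an inclusion of rank-$n$ vector bundles of equal determinant $\Oo(1)$ (since $c_1(\Omega^1_{\PP^n}(\log\Hh))=-(n+1)+(n+2)=1$), hence an isomorphism by a determinantal argument: the determinant of the inclusion is a section of $\Oo$, nonzero generically, so nonzero everywhere.

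The main obstacle is confirming that each constructed map is a genuine bundle map at every point of $\PP^n$, not merely generically. In the toric case this is a chart-by-chart verification across each orbit stratum, and in Case 2 it reduces to the nowhere-vanishing of $(c_2l_2,\dots,c_{n+2}l_{n+2})$, which is precisely the general-position hypothesis on $\Hh$.
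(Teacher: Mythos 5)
The paper offers no proof of this statement: it is quoted verbatim from Dolgachev--Kapranov \cite{DK}, so there is nothing internal to compare against, and your argument stands or falls on its own. It stands. Your route is the standard one and is correct: the toric trivialization $\Omega^1_{\PP^n}(\log\Hh)\cong\Oo^{\oplus n}$ for $m=n+1$ via the frame $d\log(x_j/x_0)$, the residue sequence for the sub-arrangement $\Hh\subset\Hh'$ to peel off the $\Oo(-1)$ summands when $m\le n$ (your bookkeeping of the kernel of the residue map is right: $n-m$ conditions of the form $a_k=l_kb_k$ plus one condition $\sum_j a_j=l_{n+1}c$, giving $\Oo^{\oplus(m-1)}\oplus\Oo(-1)^{\oplus(n-m+1)}$, consistent with $c_1=m-n-1$), and for $m=n+2$ the comparison of the relation $\sum_i c_il_i\,\omega_i=0$ with the twisted Euler sequence followed by the determinant trick. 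Two places deserve one more sentence each if you write this up. First, your claim that $\ker\phi$ \emph{equals} the image of $\Oo(-1)$ is cleanest not by "direct computation" but by saturation: $\ker\phi$ is saturated in $\Oo^{\oplus(n+1)}$ (its quotient embeds in the torsion-free sheaf $\Omega^1_{\PP^n}(\log\Hh)$), the explicit $\Oo(-1)$ is a nowhere-vanishing subbundle hence also saturated, and two nested saturated subsheaves of equal rank coincide. Second, the determinant argument needs $\phi$ to be surjective at \emph{some} point; this holds off the arrangement because $l_i(\tilde v)=t\,l_i(\tilde p)$ for all $i$ forces $\tilde v=t\tilde p$ when the $l_i$ span $V^\vee$, which you should state since it is exactly where general position enters. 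With those remarks your proof is a complete and self-contained substitute for the citation.
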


In particular, the logarithmic bundle $\Omega_{\PP^n}^1 (\log \Hh)$ does not determine the arrangement $\Hh$ uniquely when $m \leq n+2$. For the other cases, i.e. $m\geq n+3$, the following result is proven.
\begin{theorem}\cite{Valles}\label{Val}
For hyperplane arrangement $\Hh=\{H_1, \cdots, H_m\}, ~m\geq n+3$ with simple normal crossings, the assignment
$$\Hh \mapsto \Omega_{\PP^n}^1 (\log \Hh)$$
is generically injective.
\end{theorem}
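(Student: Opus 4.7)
The plan is to recover the arrangement $\Hh$ from its logarithmic bundle $E := \Omega^1_{\PP^n}(\log \Hh)$ by identifying each $H_i$ as a point of a cohomological jumping locus on $\check{\PP}^n$; this is the method of Vall\`es' \emph{unstable hyperplanes}. First I would use the residue sequence
$$0 \to \Omega^1_{\PP^n} \to E \to \bigoplus_{i=1}^m \Oo_{H_i} \to 0$$
together with the known cohomology of $\Omega^1_{\PP^n}$ to compute $h^0(E) = m-1$ in general position; these global sections are the logarithmic derivatives $d\log f_i$ modulo the Euler relation.

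Next, for each hyperplane $H \subset \PP^n$ I would define an invariant
$$\phi(H) := h^0\bigl(H, E|_H(-1)\bigr)$$
(or a cohomologically equivalent twist $h^j(H, E|_H(k))$). Upper semicontinuity of cohomology makes $\{H \in \check{\PP}^n : \phi(H) \geq c\}$ Zariski-closed for every $c$. The two key facts to establish are: (a) for $H = H_j \in \Hh$, the value $\phi(H_j)$ strictly exceeds a fixed generic value $c_0$, because restricting the residue sequence to $H_j$ produces an extra summand contributed by the conormal bundle $\Oo_{H_j}(-1)$, yielding additional sections after twisting; (b) for a general $H \notin \Hh$, $\phi(H) = c_0$. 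For (b) one restricts the residue sequence to $H$, where all intersections $H_i \cap H$ are transverse, and shows that the connecting homomorphism $H^0\bigl(\bigoplus \Oo_{H_i \cap H}(-1)\bigr) \to H^1\bigl(\Omega^1_{\PP^n}|_H(-1)\bigr)$ is injective. Combining (a) and (b), the jumping locus $\{H : \phi(H) > c_0\}$ equals $\{H_1, \ldots, H_m\}$ for $\Hh$ in a Zariski-dense open subset of the parameter space; since this locus is intrinsic to $E$, the map $\Phi$ is generically injective.

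The principal technical obstacle is step (b): one must verify that the invariant truly stabilizes on the complement of $\Hh$ inside $\check{\PP}^n$. This is where the hypothesis $m \geq n+3$ is essential and sharp: by Theorem \ref{DKthm}, for $m \leq n+2$ the bundle $E$ depends only on $m$ and $n$ and not on $\Hh$, so no recovery is possible, and it is precisely the passage from $n+2$ to $n+3$ that forces the connecting map to be injective in generic position. Carrying out the general-position analysis of the configuration $\{H_i \cap H\}_{i=1}^m \subset H \cong \PP^{n-1}$ is most naturally done by induction on $n$ via iterated restriction to linear sections, with the bound $m \geq n+3$ propagating downward through the induction to guarantee the injectivity needed at each stage.
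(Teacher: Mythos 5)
This statement is quoted from Vall\`es and the paper offers no proof of it; the closest the paper comes is Lemma \ref{f1} and Proposition \ref{f2}, which adapt exactly the strategy you describe (recover $\Hh$ as a cohomological jumping locus of hyperplanes). So your overall plan is the right one, but as written it has a genuine gap at the decisive step. Knowing that (a) each $H_j\in\Hh$ lies in the jumping locus, (b) a \emph{general} $H$ does not, and that the locus is Zariski-closed by semicontinuity, only tells you that the jumping locus is a proper closed subset of $\check{\PP}^n$ containing $\Hh$. It does not follow that it \emph{equals} $\{H_1,\dots,H_m\}$: it could a priori be a positive-dimensional subvariety. Ruling this out is the actual content of Vall\`es' theorem: he proves that a Steiner bundle with more than a bounded number of unstable hyperplanes is a Schwarzenberger bundle, whose unstable hyperplanes sweep out the osculating hyperplanes of a rational normal curve, and that a generic logarithmic bundle is not of this type. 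The paper itself signals this obstruction by inserting the hypothesis ``there is no rational normal curve $C\subset\PP^r$ such that $H_i$ is osculating $C$ for all $i$'' into Proposition \ref{f2}. Without some argument of this kind, the conclusion ``the jumping locus equals $\Hh$'' is a non sequitur, and the proposed induction on $n$ by linear sections does not supply it (restriction to a hyperplane only weakens the numerical constraint, from $m\ge n+3$ to the already-satisfied $m\ge n+2$; it does not bound the unstable locus).

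Two smaller points. First, your invariant $\phi(H)=h^0\bigl(H,E|_H(-1)\bigr)$ is the wrong twist: from the Steiner resolution $0\to\Oo_{\PP^n}(-1)^{\oplus(m-n-1)}\to\Oo_{\PP^n}^{\oplus(m-1)}\to E\to 0$ restricted to $H$ one sees this group vanishes for \emph{every} $H$ once $n\ge 3$, so it detects nothing. The invariant actually used (here and in \cite{Valles}) is $h^0\bigl(E^\vee|_H\bigr)$: the residue sequence gives a surjection $E\to\Oo_{H_j}$, hence $E|_{H_j}\onto\Oo_{H_j}$ and a nonzero section of $E^\vee|_{H_j}$ (this is Lemma \ref{f1}), whereas for general $\Hh$ and $H\notin\Hh$ one shows $h^0\bigl(E^\vee|_H\bigr)=0$ via the dualized Steiner resolution. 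Second, the role of $m\ge n+3$ is not that it ``forces a connecting map to be injective'': for $m\le n+2$ Theorem \ref{DKthm} shows $E$ is independent of $\Hh$ (for $m=n+2$ it is $T\PP^n(-1)$, for which every hyperplane is unstable), so the unstable locus is all of $\check{\PP}^n$ and the method collapses; $m\ge n+3$ is needed both to have a genuine Steiner bundle and to exclude the Schwarzenberger case generically.
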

\begin{remark}
The result was originally proven in \cite{DK} for $m\geq 2n+3$. Recently there was a work \cite{Angelini} proving that the assignment is also generically injective when the hypersurfaces are quadrics and the number of hypersurfaces is at least ${n+2 \choose 2}+3$.
\end{remark}

%%%%%%%%%%%%%%%%%%%%%%%%%%%%%%%%%
\section{Tame configuration}

The main idea of Vall\`es' proof is to reconstruct the hyperplanes from the arrangement as unstable hyperplanes of the bundle $\Omega_{\PP^{n}}^1(\log \Hh)$.

\begin{definition}\cite{Valles}\cite{Angelini}
Let $\Ee$ be a vector bundle of rank $n$ on a smooth projective variety $X\subset \PP^r$. A hyperplane section $D=X\cap H$ with $H\in |\Oo_{\PP^r}(1)|$ is called an \it{unstable} hypersurface of $\Ee$ if $H^0(\Ee^\vee_{\vert_D})\ne 0$ and $H^1(\Ee^\vee_{\vert_D})\ne 0$.
\end{definition}

\begin{remark}\label{valrmk}
In \cite{Valles}, Vall\`es proved that for a generic hyperplane arrangement $\Hh=\{H_1, \cdots, H_m\}$ in $\PP^n$, the set of unstable hyperplanes of $\Omega_{\PP^n}^1(\log \Hh)$ is exactly $\Hh$ if $m\geq n+3$.
\end{remark}

\begin{lemma}\label{f1}
Let $\Dd = D_1\cup \cdots \cup D_m$ be a simple normal crossings divisor on $X$. Then $D_i$ is an unstable hypersurface of $\Omega_X^1 (\log \Dd)$ for all $i$.
\end{lemma}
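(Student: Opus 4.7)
Set $\Ee = \Omega_X^1(\log\Dd)$, so that $\Ee^\vee = T_X(-\log\Dd)$. The plan is to exhibit the two required nonzero cohomology classes directly from the residue sequence~(\ref{seq1}).

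First I would handle $H^0(D_i,\Ee^\vee|_{D_i})\neq 0$. Composing the surjective residue $\Ee\twoheadrightarrow\bigoplus_j \epsilon_{j*}\Oo_{D_j}$ with projection onto the $i$-th summand gives a surjection $\op{res}_i:\Ee\twoheadrightarrow \epsilon_{i*}\Oo_{D_i}$. Since $\Ee$ is locally free, the sheaf adjunction
\[
\Hom_X(\Ee,\epsilon_{i*}\Oo_{D_i}) \cong \Hom_{D_i}(\Ee|_{D_i},\Oo_{D_i}) = H^0(D_i,\Ee^\vee|_{D_i})
\]
turns $\op{res}_i$ into a nowhere-vanishing global section of $\Ee^\vee|_{D_i}$, giving the first condition.

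For $H^1(D_i,\Ee^\vee|_{D_i})\neq 0$, the natural tool is the Poincar\'e residue along $D_i$ applied to the restriction $\Ee|_{D_i}$. Writing $\Dd'=\Dd\setminus\{D_i\}$ (so that $\Dd'|_{D_i}$ is again SNC on $D_i$ by the SNC hypothesis on $\Dd$), one has the standard short exact sequence
\[
0\to \Omega_{D_i}^1(\log(\Dd'|_{D_i}))\to \Ee|_{D_i}\to \Oo_{D_i}\to 0.
\]
Dualizing (all three sheaves are locally free on $D_i$) yields
\[
0\to \Oo_{D_i}\to \Ee^\vee|_{D_i}\to T_{D_i}(-\log(\Dd'|_{D_i}))\to 0,
\]
whose connecting inclusion is precisely the section $s_i$ produced above, and whose long exact sequence in cohomology sandwiches $H^1(\Ee^\vee|_{D_i})$ between $H^1(\Oo_{D_i})$ and $H^1(T_{D_i}(-\log\Dd'|_{D_i}))$.

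The main obstacle is controlling this sandwich so as to guarantee a nonzero class, since both the boundary groups and the connecting map may a priori collapse (e.g.\ $H^1(\Oo_{D_i})=0$ when $D_i\subset\PP^r$ is a rational hyperplane section). My plan is to combine Serre duality on the $(n-1)$-dimensional variety $D_i$,
\[
H^1(D_i,\Ee^\vee|_{D_i})^{\vee}\cong H^{n-2}(D_i,\Ee|_{D_i}\otimes\omega_{D_i}),
\]
with the twist of the Poincar\'e residue sequence by $\omega_{D_i}$, and to use the remaining residue maps $\op{res}_j$ for $j\ne i$ to produce classes detecting the nontriviality of the extension. The SNC hypothesis enters essentially at this step, ensuring that the induced arrangement $\Dd'|_{D_i}$ on $D_i$ is again SNC and thus that the logarithmic formalism applies inductively.
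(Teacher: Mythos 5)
Your first paragraph reproduces the paper's entire proof: the authors likewise restrict the residue surjection $\Omega_X^1(\log\Dd)\to\epsilon_{i*}\Oo_{D_i}$ to $D_i$, use local freeness (from the SNC hypothesis), and conclude that the resulting surjection $\Omega_X^1(\log\Dd)_{\vert_{D_i}}\to\Oo_{D_i}$ gives a nonzero element of $H^0(\Omega_X^1(\log\Dd)^\vee_{\vert_{D_i}})$. The paper stops there: it never verifies the condition $H^1(\Ee^\vee_{\vert_{D_i}})\ne 0$ that appears in its definition of unstable hypersurface, and in the one place the lemma is invoked (Proposition \ref{f2}) ``unstable'' is explicitly glossed as the single condition $H^0(\Omega_{\PP^r}^1(\log\Hh_{\Dd})^\vee_{\vert_H})\ne 0$.

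Your second half is thus the only divergence, and it is a genuine gap as written: it is an announced plan (restricted residue sequence, dualize, Serre duality, ``use the remaining residue maps''), not an argument, and you yourself note that both ends of your cohomology sandwich may vanish. More seriously, the plan cannot be completed at this level of generality, because the $H^1$-nonvanishing is simply false for small arrangements: take $X=\PP^n\subset\PP^n$ and $\Dd=\{H_1\}$ a single hyperplane, so that $\Omega_X^1(\log\Dd)\cong\Oo_{\PP^n}(-1)^{\oplus n}$ by Theorem \ref{DKthm} with $m=1$; then $\Ee^\vee_{\vert_{D_1}}\cong\Oo_{\PP^{n-1}}(1)^{\oplus n}$ and $H^1(\Ee^\vee_{\vert_{D_1}})=0$, while $H^0\ne 0$. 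So the lemma, read with both conditions of the stated definition, is not provable; the intended and actually used content is only the $H^0$-statement, which you have established exactly as the paper does (your observation that the section is nowhere vanishing is a harmless strengthening). Your intermediate steps in the second half are correct for SNC $\Dd$ --- the sequence $0\to\Omega_{D_i}^1(\log(\Dd'_{\vert_{D_i}}))\to\Ee_{\vert_{D_i}}\to\Oo_{D_i}\to 0$ and its dual are standard --- but the decisive nonvanishing is missing and would require extra hypotheses (e.g.\ sufficiently many divisors, as in the Steiner range $m\ge r+3$), not just the SNC assumption.
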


\begin{proof}
Setting $D=D_i$, the exact sequence (\ref{seq1}) gives a surjection $\Omega _X^1(\log \Dd ) \to \Oo _D$ of $\Oo _X$-sheaves.
Since the tensor product is a right exact functor, we get a surjection $f: \Omega _X^1(\log \Dd )_{\vert_D} \to \Oo _D$ of $\Oo _D$-sheaves.
Since $\Dd$ has simple normal crossings, $\Omega _X^1(\log \Dd ) $ is locally free and so is $ \Omega _X^1(\log \Dd )_{\vert_D}$. Hence $f$ induces a non-zero element of $H^0(\Omega _X^1(\log \Dd )^\vee _{\vert_D})$.
\end{proof}

Let $X\subset \PP^r$ be a smooth projective variety of dimension $n\ge 2$ and let $|W| \subseteq |\Oo _X(1)|$ be the set of all hyperplane sections $X\cap H$ with $H\subset \PP^r$ a hyperplane.
Since $n\geq 2$, a standard exact sequence gives $h^0(\Oo _{X\cap H})=1$ for all $X\cap H\in |W|$ and so each element of $|W|$ is connected.

For each $D\in |W|$ there is a unique hyperplane $H\subset \PP^r$ such that $D = X\cap H$ and $D$ spans $H$. Let $\Dd := D_1\cup \cdots \cup D_m$ be a simple normal crossings divisor with $D_i\in |W|$ for all $i$. Let
$H_i\subset \PP ^r$ be the hyperplane such that $D_i = X\cap H_i$. Set $\Hh_{\Dd}:= H_1\cup \cdots \cup H_m$. Then $\Hh_{\Dd}$ is a union of $m$ distinct hyperplanes in $\PP^r$ and it is ``almost'' simple normal crossings.

\begin{definition}
An arrangement $\Dd \in |W|$ of hypersurfaces in $X$ is {\it tame} or it is a {\it tame configuration of hypersurfaces} if $\Hh_{\Dd}$ is simple normal crossings.
\end{definition}
It is clear that a general union of $m$ general elements of $|W|$ is tame. The proof of the next result is an adaptation of the proof of Theorem 5.4 in \cite{Angelini}.

\begin{proposition}\label{f2}
Let $\Dd$ be a tame configuration of $m$ hyperplane sections in $|W|$ of $X$ and let $\Hh_{\Dd}$ be its corresponding hyperplane arrangement in $\PP^r$. Assume that there is no rational normal curve $C\subset \PP^r$ such that $H_i$ is osculating $C$ for all $i$. If $m\geq r+3$, then the set of unstable hyperplane sections of $\Omega_X^1 (\log \Dd)$ in $ |W|$ is $\Dd$.
\end{proposition}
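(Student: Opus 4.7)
By Lemma \ref{f1}, $\Dd$ is already contained in the set of unstable hyperplane sections of $\Omega_X^1(\log \Dd)$, so the task is to prove the reverse inclusion: if $D = X\cap H \in |W|$ is unstable, then $D\in \Dd$. The plan, adapting Angelini's Theorem 5.4, is to transfer the unstability from $X$ to the ambient $\PP^r$ and then apply the Vall\`es-type statement (Theorem \ref{Val} together with the non-osculating refinement noted in the remark that follows) to $\Hh_\Dd$ in $\PP^r$.

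The first step is to compare the two logarithmic bundles via the inclusion $i\colon X\hookrightarrow \PP^r$. Since $\Dd$ is tame, $\Hh_\Dd$ has simple normal crossings in $\PP^r$ and each $H_i$ meets $X$ transversally in $D_i$; in particular $i^*$ of the residue sequence for $(\PP^r,\Hh_\Dd)$ stays exact. Combining this pulled-back residue sequence with the conormal sequence $0\to N^*_{X/\PP^r}\to i^*\Omega_{\PP^r}^1\to \Omega_X^1\to 0$ via a snake-lemma diagram chase yields
$$0\to N^*_{X/\PP^r}\to i^*\Omega_{\PP^r}^1(\log \Hh_\Dd)\to \Omega_X^1(\log \Dd)\to 0,$$
and dualizing gives
$$0\to TX(-\log \Dd)\to i^*T\PP^r(-\log \Hh_\Dd)\to N_{X/\PP^r}\to 0,$$
which is the key technical tool.

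Next I would restrict this dual sequence to $D = X\cap H$ and examine the long exact sequence in cohomology. From the unstability hypothesis ($h^0$ and $h^1$ of $TX(-\log \Dd)|_D$ both nonzero), together with control over $H^0$ and $H^1$ of $N_{X/\PP^r}|_D$, one reads off that $T\PP^r(-\log \Hh_\Dd)|_D$ has nonzero $H^0$ and $H^1$. The delicate step is then to propagate this unstability from $D$ to all of $H$: since $D = X\cap H$ spans $H$ (recall $h^0(\Oo_{X\cap H})=1$ for $n\ge 2$, so $D$ is connected and non-degenerate in $H$), one argues by a restriction/extension analysis along $H\setminus D$ that the nonzero sections and first cohomology on $D$ lift, respectively come from, the corresponding cohomology of $T\PP^r(-\log \Hh_\Dd)|_H$. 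This shows that $H$ is itself an unstable hyperplane of $\Omega_{\PP^r}^1(\log \Hh_\Dd)$. With $m\ge r+3$, $\Hh_\Dd$ simple normal crossings, and the no-osculating-rational-normal-curve hypothesis in force, the Vall\`es--Angelini theorem applied to $\Hh_\Dd$ in $\PP^r$ yields $H=H_j$ for some $j$, and hence $D = X\cap H_j = D_j\in \Dd$. The main obstacle is the cohomological propagation in this middle step: bridging sections on the potentially high-codimension subvariety $D\subset H$ to sections on all of $H$ is precisely where the tameness hypothesis and the spanning property of $|W|$ must be used carefully.
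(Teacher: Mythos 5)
Your overall architecture coincides with the paper's: Lemma \ref{f1} gives that every $D_i\in \Dd$ is unstable, and for the converse you restrict the sequence $0 \to \Omega _X^1(\log \Dd )^\vee \to \Omega _{\PP^r}^1(\log \Hh_{\Dd} )^\vee _{\vert_X} \to \Nn \to 0$ (Proposition 2.11 of \cite{D}; tameness is used exactly to guarantee that $\Hh_{\Dd}$ has simple normal crossings so that this sequence of bundles exists) to $D$, deduce $H^0(\Omega _{\PP^r}^1(\log \Hh_{\Dd} )^\vee _{\vert_D})\ne 0$, and then conclude via Vall\`es (Remark \ref{valrmk}). But the step you yourself flag as ``the main obstacle'' --- passing from nonvanishing on $D$ to nonvanishing on $H$ --- is the actual content of the proof, and a ``restriction/extension analysis along $H\setminus D$'' is not an argument: sections of a bundle on a closed subvariety $D\subset H$ do not in general extend to $H$. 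What one must show is that the restriction map $H^0(\Omega _{\PP^r}^1(\log \Hh_{\Dd} )^\vee _{\vert_H})\to H^0(\Omega _{\PP^r}^1(\log \Hh_{\Dd} )^\vee _{\vert_D})$ is surjective, i.e.\ that $H^1(\Ii _{D,H}\otimes \Omega _{\PP^r}^1(\log \Hh_{\Dd} )^\vee _{\vert_H})=0$, after tensoring $0\to \Ii _{D,H}\to \Oo _H\to \Oo _D\to 0$ with the dual logarithmic bundle; this vanishing is missing from your proposal.

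The paper obtains it from the Steiner resolution $0\to \Oo _{\PP^r}(-1)^{\oplus (m-r-1)}\to \Oo _{\PP^r}^{\oplus (m-1)}\to \Omega _{\PP^r}^1(\log \Hh_{\Dd})\to 0$, available because $m\ge r+3$ (Theorem 3.5 of \cite{DK}); dualizing, restricting to $H$ and twisting by $\Ii _{D,H}$ gives $0\to \Ii _{D,H}\otimes \Omega _{\PP^r}^1(\log \Hh_{\Dd} )^\vee _{\vert_H}\to \Ii _{D,H}^{\oplus (m-1)}\to \Ii _{D,H}(1)^{\oplus (m-r-1)}\to 0$, and then $h^1(\Ii _{D,H})=0$ (since $D$ is connected and reduced, $h^0(\Oo _D)=1$) together with $h^0(\Ii _{D,H}(1))=0$ (since $D$ spans $H$) yields the claim. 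Note that in your plan the hypothesis $m\ge r+3$ enters only at the final Vall\`es step, whereas it is also needed here to have the Steiner resolution, and the ``spanning property'' of $D$ in $H$ is used in this precise cohomological form, not in a vague extension argument. Two smaller points: you do not need to propagate $H^1\ne 0$ at all, since the criterion applied at the level of $\PP^r$ (Remark \ref{valrmk}, following Vall\`es) only requires $H^0(\Omega _{\PP^r}^1(\log \Hh_{\Dd} )^\vee _{\vert_H})\ne 0$; and for the $H^0$ transfer on $D$ no ``control over the cohomology of $\Nn _{\vert_D}$'' is needed, since left exactness of $H^0$ applied to the restricted sequence already gives the injection. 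As written, the proposal sets up the correct frame but omits the proof of its central step.
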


\begin{proof}
Lemma \ref{f1} asserts that each $D_i\in \Dd$ is an unstable hyperplane section. Now let us fix an unstable hypersurface $D\in |W|$ and let $H\subset \PP^r$ be its corresponding hyperplane. By Remark \ref{valrmk} it is sufficient to prove
that $H$ is an unstable hyperplane of $\Omega_{\PP^r}^1 (\log \Hh_{\Dd})$, i.e. $H^0(\Omega _{\PP^r}^1(\log \Hh_{\Dd} )^\vee _{\vert_H}) \ne 0$. Let $\Nn$ be the normal bundle of $X$ in $\PP^r$. Since each $D_i$ is smooth, each $H_i$ is transversal to $X$ and so by Proposition 2.11 in \cite{D}we have an exact sequence of vector bundles on $X$:
\begin{equation}\label{eqf1}
0 \to \Omega _X^1(\log \Dd )^\vee \to \Omega _{\PP^r}^1(\log \Hh_{\Dd} )^\vee _{\vert_X} \to \Nn \to 0.
\end{equation}
Restricting the sequence (\ref{eqf1}) to $D$, we get an exact sequence on $D$ which induces
an injective map
$$0\to H^0(D,\Omega _X^1(\log \Dd )^\vee _{\vert_D}) \to H^0(\Omega _{\PP^r}^1(\log \Hh_{\Dd} )^\vee _{\vert_D}).$$
Thus we have $H^0(\Omega _{\PP^r}^1(\log \Hh_{\Dd} )^\vee _{\vert_D}) \ne 0$.
By tensoring the following sequence with $\Omega _{\PP ^r}^1(\log \Hh_{\Dd} )^\vee$
$$0 \to \Ii _{D,H} \to \Oo _H \to \Oo _D\to 0,$$
we get that to prove that $H^0(\Omega _{\PP^r}^1(\log \Hh_{\Dd} )^\vee _{\vert_H}) \ne 0$ it is sufficient to prove that $H^1(\Ii _{D,H}\otimes \Omega _{\PP^r}^1(\log \Hh_{\Dd} )^\vee _{\vert_H})=0$, since $H^0(\Omega _{\PP^r}^1(\log \Hh_{\Dd})^\vee _{\vert_D}) \ne 0$.

Now $\Omega _{\PP^r}^1(\log \Hh_{\Dd} )$ is a Steiner bundle for $m\geq r+3$ due to Theorem 3.5 in \cite{DK}. Thus it admits the Steiner resolution:
\begin{equation}\label{sr}
0\to \Oo_{\PP^r} (-1)^{\oplus (m-r-1)} \to \Oo_{\PP^r}^{\oplus (s-1)} \to \Omega_{\PP^r}^1 (\log \Dd) \to 0.
\end{equation}
Restricting the dual of the resolution \ref{sr} to $H$ and twisting it by $\Ii _{D,H}$
we get the following exact sequence on $H$:
\begin{equation}\label{eqf3}
0 \to \Ii _{D,H}\otimes (\Omega _{\PP^r}^1(\log \Hh_{\Dd} )^\vee _{\vert_H}) \to \Ii _{D,H} ^{\oplus (s-1)} \to \Ii _{D,H}(1)^{\oplus (m-r-1)} \to 0.
\end{equation}
Since $D$ is connected and reduced, we have $h^0(\Oo _D)=1$ and so $h^1(\Ii _{D,H})=0$. We also have $H^0(\Ii _{D,H}(1))=0$ since $D$ spans $H$.
Hence the sequence (\ref{eqf3}) gives $H^1(\Ii _{D,H}\otimes (\Omega _{\PP^r}^1(\log \Hh_{\Dd} )^\vee _{\vert_H})=0$, as required.
\end{proof}

\begin{corollary}\label{k2}
Let $\Dd =D_1\cup \cdots \cup D_m \subset X$ be a general configuration of hyperplane sections and $\Dd' := D'_1\cup \cdots \cup D'_m$
be an arbitrary simple normal crossings configuration such that $\Omega _X^1(\log \Dd ) \cong \Omega _X^1(\log \Dd ')$. If $m\geq r+3$, then we have $\Dd =\Dd'$.
\end{corollary}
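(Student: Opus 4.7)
The plan is to deduce this corollary as a direct consequence of Proposition \ref{f2} combined with Lemma \ref{f1}. The guiding idea is that the set of unstable hyperplane sections of $\Omega_X^1(\log \Dd)$ in $|W|$ is an invariant of the bundle, so an isomorphism $\Omega_X^1(\log \Dd) \cong \Omega_X^1(\log \Dd')$ forces the two arrangements to have the same unstable hyperplane sections. Once this set is pinned down on the side of $\Dd$ to be exactly $\Dd$, the containment $\Dd' \subseteq \Dd$ together with the cardinality assumption will close the argument.

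First I would verify that a general configuration $\Dd$ satisfies the hypotheses of Proposition \ref{f2}. Genericity immediately implies that $\Dd$ has simple normal crossings and that the corresponding hyperplanes $H_1, \ldots, H_m \subset \PP^r$ form a simple normal crossings arrangement, i.e. $\Dd$ is a tame configuration. Moreover, for $m\geq r+3$ a general collection of $m$ hyperplanes in $\PP^r$ cannot all osculate a single rational normal curve, since the family of rational normal curves has bounded dimension while osculation imposes sufficiently many independent conditions on each $H_i$. Hence Proposition \ref{f2} applies and gives that the set of unstable hyperplane sections of $\Omega_X^1(\log \Dd)$ inside $|W|$ is exactly $\Dd$.

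Next I would exploit the hypothesized isomorphism $\Omega_X^1(\log \Dd) \cong \Omega_X^1(\log \Dd')$. Since the notion of being an unstable hyperplane section of a bundle depends only on the isomorphism class, the two bundles have the same set of unstable hyperplane sections in $|W|$. On the other hand, Lemma \ref{f1} applied to the simple normal crossings configuration $\Dd'$ shows that each $D'_j$ is an unstable hyperplane section of $\Omega_X^1(\log \Dd')$. Combining these observations gives the inclusion $\Dd' \subseteq \Dd$ as subsets of $|W|$, and since both configurations consist of exactly $m$ distinct hyperplane sections we conclude $\Dd = \Dd'$.

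The only real obstacle is the verification of the non-osculation hypothesis for a general configuration, which is a transversality/dimension count rather than a substantial argument; everything else is a formal transfer of information through the shared bundle, so no further technical input beyond Lemma \ref{f1} and Proposition \ref{f2} is needed.
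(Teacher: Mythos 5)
Your argument is essentially the paper's own proof of Corollary \ref{k2}: Lemma \ref{f1} plus the fact that the set of unstable hyperplane sections depends only on the isomorphism class shows each $D'_j$ is unstable for $\Omega_X^1(\log \Dd)$, Proposition \ref{f2} (applicable since a general $\Dd$ is tame and satisfies the non-osculation hypothesis) identifies that set with $\Dd$, and equal cardinalities give $\Dd=\Dd'$, which is word for word the paper's route. The one place you add substance is the dimension count for non-osculation, and note it is only strict for $m\geq r+4$: the locus of arrangements osculating some rational normal curve has dimension $(r^2+2r-3)+m$ against $mr$ for all arrangements, and for $m=r+3$ these coincide (indeed $r+3$ general hyperplanes, viewed as points of the dual space, lie on a unique rational normal curve), a boundary case that the paper's proof asserts away in exactly the same way.
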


\begin{proof}
By Lemma \ref{f1}, each $D_i'$ is an unstable hypersurface of $\Omega _X^1(\log \Dd )$ in $|W|$ for all $i$. Since $\Dd$ is general, its corresponding arrangement $\Hh_{\Dd}$ is a general configuration of $m$ hyperplanes. Hence $\Dd$ is tame.
Since $m \ge r+3$ and $\Hh$ is general, there is no rational
normal curve $C\subset \PP^r$ such that all $H_i$'s  are osculating hyperplanes of $C$. Proposition \ref{f2} gives $D'_i\in \{D_1,\dots ,D_m\}$ for all $i$ and so we have $\Dd ' = \Dd$.
\end{proof}

\begin{lemma}\label{a2}
 Let $T$ be a reduced, connected and non-degenerate curve in $\PP^r$ with $r\geq 2$. Then we have $H^0(T,\Omega _{\PP ^r}^1(1)_{\vert _T}) =0$.
 \end{lemma}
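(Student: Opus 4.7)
The plan is to reduce this to a statement about linear forms vanishing on $T$ via the Euler sequence on $\PP^r$. Twisting the Euler sequence by $\Oo_{\PP^r}(1)$ gives the short exact sequence of vector bundles
\begin{equation*}
0 \to \Omega_{\PP^r}^1(1) \to \Oo_{\PP^r}^{\oplus(r+1)} \to \Oo_{\PP^r}(1) \to 0,
\end{equation*}
where the surjection on the right is given by $(f_0,\dots,f_r)\mapsto \sum_i f_i x_i$ for a choice of homogeneous coordinates $x_0,\dots,x_r$ on $\PP^r$.

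Since all three sheaves in this sequence are locally free on $\PP^r$, restricting to the curve $T$ keeps the sequence exact, producing
\begin{equation*}
0 \to \Omega_{\PP^r}^1(1)_{\vert_T} \to \Oo_T^{\oplus(r+1)} \to \Oo_T(1) \to 0.
\end{equation*}
Taking global sections then yields the left-exact sequence
\begin{equation*}
0 \to H^0(T,\Omega_{\PP^r}^1(1)_{\vert_T}) \to H^0(T,\Oo_T)^{\oplus(r+1)} \to H^0(T,\Oo_T(1)),
\end{equation*}
in which the rightmost map sends $(a_0,\dots,a_r)$ to the restriction $\sum_i a_i\, x_i\vert_T$.

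Because $T$ is reduced and connected we have $H^0(T,\Oo_T)=\KK$, so the middle term is just $\KK^{r+1}$, and the kernel of the evaluation map $\KK^{r+1}\to H^0(T,\Oo_T(1))$ is precisely the space of linear forms on $\PP^r$ vanishing identically on $T$. The non-degeneracy hypothesis means that $T$ is not contained in any hyperplane, so this kernel is zero, forcing $H^0(T,\Omega_{\PP^r}^1(1)_{\vert_T})=0$ as claimed. The only subtlety to check carefully is that restriction of the twisted Euler sequence to the possibly singular curve $T$ really remains exact, but this is immediate since all three sheaves are locally free on $\PP^r$ (equivalently, $\mathcal{T}\!or_1^{\Oo_{\PP^r}}(\Oo_T,\Oo_{\PP^r}(1))=0$).
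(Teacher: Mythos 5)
Your proof is correct and follows essentially the same route as the paper: restrict the twisted Euler sequence to $T$, use reducedness and connectedness to identify a global section of $\Omega_{\PP^r}^1(1)_{\vert_T}$ with a nonzero tuple of constants, and conclude that $T$ would lie in the corresponding hyperplane, contradicting non-degeneracy. The extra remark on exactness of the restriction (local freeness of all three terms) is a harmless and welcome precision.
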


 \begin{proof}
Assume $H^0(T,\Omega _{\PP ^r}^1(1)_{\vert_T}) \ne 0$. Let us fix homogeneous coordinates $[x_0,\dots ,x_r]$ on $T$ and look at the restriction of the Euler sequence for $T\PP^r$ to $T$:
\begin{equation}\label{eqa2}
0 \to \Omega _{\PP ^r}^1(1)_{\vert_T} \to \Oo _T^{\oplus (r+1)} \stackrel{\sigma}{\to} \Oo _T(1)\to 0,
\end{equation}
where the map $\sigma$ is induced by the map $(c_0,\dots ,c_r)\mapsto c_0x_0+\cdots +c_rx_r$. Thus any non-zero element of $H^0(T,\Omega _{\PP ^r}^1(1)_{\vert_T})$
corresponds to an $(r+1)$-uple $(c_0,\dots ,c_r) \ne (0,\dots ,0)$ of constants, since $T$ is connected. It implies that $T$ is contained in the hypersurface
$\{c_0x_0+\cdots +c_rx_r =0\}$, contradicting to the assumption.
 \end{proof}

Let $X\subseteq \PP^m$ be a smooth and non-degenerate projective variety and let $\nu _d: \PP^m \to \PP^{N_{m,d}}$ with $N_{m,d}= \binom{m+d}{m}-1$ denote the Veronese embedding of order $d$. Then the linear span of $\nu_d(X)$ in $\PP^{N_{m,d}}$ is $r$-dimensional projective $\PP^r$ with $r:= \binom{m+d}{m}-h^0(\PP ^m,\Ii _X(d)) -1$.

Let $|W_d|$ be the set of all $A\in |\Oo_X(d)|$ for the form $\nu_d(X)\cap H$ for some hyperplane $H\subset \PP^r$. We have $|W_d|=|\Oo_X(d)|$ if and only if $X$ is projectively normal in degree $d$, i.e. the restriction map $H^0(\Oo_{\PP^m}(d))\to H^0(\Oo_X(d))$ is surjective.

 \begin{proposition}\label{b1}
 For $s\geq  r+2$, let $\Dd = D_1\cup \cdots \cup D_{s}$ be a simple normal crossings divisor of $X$ such that
\begin{enumerate}
\item $D_i \in |W_d|$ for all $i \le r+2$, and
\item $\bigcup_{i=1}^{r+2}\nu_d(D_i)$ is tame in $\nu_d(X)$.
\end{enumerate}
 Let $\Dd'$ be another simple normal crossings divisor of $X$ with $\Omega_X^1 (\log \Dd)\cong \Omega_X^1 (\log \Dd ')$. If $E\in \Dd'$ is any of the divisors which is connected and not contained
in a hypersurface of degree $d$ in $\PP^m$, then we have $E\in \{D_1,\dots ,D_s\}$.
 \end{proposition}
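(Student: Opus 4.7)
I would argue by contrapositive: suppose $E\neq D_i$ for every $i=1,\dots,s$ and derive a contradiction. Set $\Dd_0:=D_1\cup\cdots\cup D_{r+2}$, and let $H_i\subset\PP^r$ be the hyperplane such that $\nu_d(D_i)=\nu_d(X)\cap H_i$ for $i\leq r+2$, so $\Hh:=\bigcup_{i=1}^{r+2}H_i$ is a simple normal crossings arrangement in $\PP^r$ by hypothesis (2). The key feature of the set-up is that $|\Hh|=r+2$: Theorem \ref{DKthm} then gives
$$\Omega_{\PP^r}^1(\log \Hh)\cong T\PP^r(-1),\qquad \Omega_{\PP^r}^1(\log \Hh)^\vee\cong \Omega_{\PP^r}^1(1).$$
By Lemma \ref{f1} applied to $\Dd'$ and the isomorphism $\Omega_X^1(\log \Dd)\cong \Omega_X^1(\log \Dd')$, one has $H^0(E, TX(-\log \Dd)|_E)\neq 0$; the goal is to transport this section to $\Omega_{\PP^r}^1(1)|_{\nu_d(E)}$ and then contradict its vanishing.

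For this I would combine two short exact sequences. The first, coming from Proposition 2.11 of \cite{D} applied to $\nu_d(X)\hookrightarrow \PP^r$ with the arrangement $\nu_d(\Dd_0)$ (the $H_i$'s cut $\nu_d(X)$ transversally because each $D_i$ is smooth), is
$$0\to TX(-\log \Dd_0)\to \Omega_{\PP^r}^1(1)|_X\to \Nn\to 0,\qquad \Nn:=\Nn_{\nu_d(X)/\PP^r}.$$
The second arises from dualizing the residue sequence comparing $\Dd$ with $\Dd_0$,
$$0\to \Omega_X^1(\log \Dd_0)\to \Omega_X^1(\log \Dd)\to \bigoplus_{j>r+2}\Oo_{D_j}\to 0,$$
which, via the standard identification $\Ee xt^1(\Oo_{D_j},\Oo_X)\cong \Oo_{D_j}(D_j)$ for a smooth Cartier divisor, produces
$$0\to TX(-\log \Dd)\to TX(-\log \Dd_0)\to \bigoplus_{j>r+2}\Oo_{D_j}(D_j)\to 0.$$

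Next I would restrict both sequences to $E$. The local freeness of $\Nn$ kills $\Tor_1(\Nn,\Oo_E)$, and each $\Tor_1(\Oo_{D_j}(D_j),\Oo_E)=0$ because the contradiction hypothesis $E\neq D_j$ together with irreducibility of $D_j$ gives $E\not\subseteq D_j$. Hence both sequences stay exact after $\otimes_{\Oo_X}\Oo_E$ and we obtain a chain of genuine sheaf injections
$$TX(-\log \Dd)|_E\hookrightarrow TX(-\log \Dd_0)|_E\hookrightarrow \Omega_{\PP^r}^1(1)|_E,$$
so a nonzero element of $H^0(TX(-\log \Dd)|_E)$ yields a nonzero element of $H^0(\nu_d(E),\Omega_{\PP^r}^1(1)|_{\nu_d(E)})$.

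Finally, the hypothesis that $E$ is not contained in a degree-$d$ hypersurface of $\PP^m$ translates via $\nu_d$ into $\nu_d(E)$ being non-degenerate in $\PP^r$; combined with connectedness and smoothness (hence reducedness) of $\nu_d(E)$, the Euler-sequence argument in the proof of Lemma \ref{a2}---which only uses $H^0(\Oo_{\nu_d(E)})=\KK$ and non-degeneracy and is therefore dimension-independent---forces $H^0(\nu_d(E),\Omega_{\PP^r}^1(1)|_{\nu_d(E)})=0$. This contradicts the nonzero section constructed above, so $E\in\{D_1,\dots,D_s\}$. The main technical step is the $\Tor_1$ vanishing, which is exactly where the contradiction hypothesis feeds in; once that is in place, the combination of Theorem \ref{DKthm} for $r+2$ hyperplanes and the mild generalization of Lemma \ref{a2} does the rest.
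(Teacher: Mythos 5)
Your proof is correct and follows essentially the same route as the paper: the same splitting off of $\Aa=D_1\cup\cdots\cup D_{r+2}$, the same two exact sequences (the dual of the residue comparison and the dualized conormal sequence from Proposition 2.11 of \cite{D}), the identification $\Omega_{\PP^r}^1(\log\Hh)^\vee\cong\Omega_{\PP^r}^1(1)$ via Theorem \ref{DKthm}, and the Euler-sequence contradiction. The only (harmless) difference is that you apply the Euler argument directly to $\nu_d(E)$, observing that Lemma \ref{a2} is dimension-independent, whereas the paper restricts the section to a non-degenerate curve $T\subseteq\nu_d(E)$ and invokes Lemma \ref{a2} as stated.
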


 \begin{proof}
Write $\Aa = D_1\cup \cdots \cup D_{r+2}$ and $\Hh=H_1 \cup \cdots \cup H_{r+2}$ where $H_i$ is the hyperplane in $\PP^r$ such that $\nu_d(D_i)=H_i\cap \nu_d(X)$. Then we have an exact sequence
\begin{equation}\label{eqa3}
0 \to \Omega _X^1(\log \Aa ) \to \Omega _X^1(\log \Dd ) \to \oplus _{i=r+3}^{s} \epsilon _{i\ast} (\Oo _{D_i})\to 0.
\end{equation}
We may assume $E\ne D_i$ for all $i>r+2$ and so the restriction of the dual of (\ref{eqa3}) to $E$ gives an inclusion $j: \Omega _X^1(\log \Dd )^{\vee}_{\vert_E} \to \Omega _X^1(\log \Aa )^{\vee}_{\vert_E}$.
Since $H^0(\Omega _X^1(\log \Dd )^\vee_{\vert_ E})\ne 0$  by Lemma \ref{f1}, we also have $H^0(\Omega _X^1(\log \Aa)^\vee_{\vert_E})\ne 0$. Look at the conormal exact sequence of $\nu _d(X)$ in $\PP^r$:
 \begin{equation}\label{eqe1}
0 \to \mathcal {N}^\vee \to \Omega _{\PP ^r}^1(\log \Hh )_{\vert_{\nu _d(X)}}\to \Omega _{\nu _d(X)}^1(\log \nu _d(\Aa ))\to 0.
\end{equation}
Dualizing (\ref{eqe1}) and then restricting it to $D:= \nu _d(E)$ we get the exact sequence
\begin{equation}\label{eqe3}
0 \to \Omega _{\nu _d(X)}^1(\log \nu _d(\Aa ))^\vee _{\vert_D} \to \Omega _{\PP^r}^1(\log \Hh )^\vee _{\vert_D} \to \Nn {\vert_D} \to 0
\end{equation}
and an injective map $H^0( \Omega _{\nu _d(X)}^1(\log \nu _d(\Aa)^\vee _{\vert_D}) \to H^0(\Omega _{\PP^r}^1 (\log \Hh)^\vee_{\vert_D})$.
Thus we have $H^0(\Omega _{\PP ^r}^1(\log \Hh )^\vee_{\vert_D}) \ne 0$.

Let us fix a non-zero element $\epsilon \in H^0(\Omega _{\PP ^r}^1(\log \Hh )^\vee_{\vert_D})$. By Theorem \ref{DKthm}, we have $\Omega _{\PP ^r}^1(\log \Hh ) \cong T\PP ^r(-1)$. Since $E$ is not contained in a hypersurface of degree $d$, $D$ spans $\PP^r$. Hence there is a non-degenerate irreducible curve $T\subseteq D$ such that $\epsilon_{\vert_T} \ne 0$, which is absurd by Lemma \ref{a2}.
 \end{proof}

%%%%%%%%%%%%%%%%%%%%%%%%%%%%%%%%%%%%%
\section{Quadric hypersurface}

Now let us consider the logarithmic bundles over a smooth quadric hypersurface $Q=Q_n$ in $\PP^{n+1}$ with $n\geq 2$.
We recall that Kn\"{o}rrer classified all ACM bundles (i.e. bundles without intermediate cohomology)  on $Q_n$ as
direct sums of line bundles and spinor bundles (up to a twist) (see \cite{Kn}).
\begin{proposition}\label{acm}
No logarithmic bundle on $Q_n$, $n\ge 3$ is an ACM bundle.
\end{proposition}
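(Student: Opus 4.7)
The plan is to prove $H^{n-1}(F(1-n))\ne 0$ for $F:=\Omega^1_{Q_n}(\log \Dd)$; since $0<n-1<n$, this violates the defining vanishing for an ACM bundle and, combined with Kn\"orrer's classification, completes the proof by contradiction. The edge case $\Dd=\emptyset$ is handled directly: $h^1(\Omega^1_{Q_n})=h^{1,1}(Q_n)=1$ for $n\ge 3$, so $\Omega^1_{Q_n}$ is already not ACM.

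Assume $\Dd\ne\emptyset$. By Serre duality on $Q_n$ (with $\omega_{Q_n}=\Oo_{Q_n}(-n)$),
\[
H^{n-1}(F(1-n))^{\vee}\cong H^1(F^{\vee}(-1)),
\]
where $F^{\vee}=TQ_n(-\log \Dd)$. Twist the dual residue sequence
\[
0\to TQ_n(-\log \Dd)\to TQ_n\to \bigoplus_j \epsilon_{j\ast}\Oo_{D_j}(D_j)\to 0
\]
by $\Oo_{Q_n}(-1)$ and pass to the long exact cohomology sequence. The key input is the double vanishing
\[
H^0(TQ_n(-1))=H^1(TQ_n(-1))=0. \quad (\star)
\]
Granting $(\star)$, one obtains $H^1(F^{\vee}(-1))\cong \bigoplus_j H^0(D_j,\Oo_{D_j}(d_j-1))$, where $d_j\ge 1$ is the degree of $D_j$ in $\Pic(Q_n)=\ZZ$. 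The exact sequence $0\to \Oo_{Q_n}(-1)\to \Oo_{Q_n}(d_j-1)\to \Oo_{D_j}(d_j-1)\to 0$ combined with $H^0(\Oo_{Q_n}(-1))=H^1(\Oo_{Q_n}(-1))=0$ gives $H^0(D_j,\Oo_{D_j}(d_j-1))\cong H^0(Q_n,\Oo_{Q_n}(d_j-1))\ne 0$ for every $d_j\ge 1$, so each summand contributes and $H^{n-1}(F(1-n))\ne 0$.

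The vanishing $(\star)$ I would prove by combining two short exact sequences on $Q_n$. From $0\to T_{\PP^{n+1}}(-3)\to T_{\PP^{n+1}}(-1)\to T_{\PP^{n+1}}(-1)|_{Q_n}\to 0$ and Bott's theorem on $\PP^{n+1}$ one gets $h^0(T_{\PP^{n+1}}(-1)|_{Q_n})=n+2$ and $h^i(T_{\PP^{n+1}}(-1)|_{Q_n})=0$ for $i\ge 1$. Then the immersion sequence
\[
0\to TQ_n(-1)\to T_{\PP^{n+1}}(-1)|_{Q_n}\to \Oo_{Q_n}(1)\to 0
\]
has its induced map on $H^0$ given by the gradient of the defining quadric of $Q_n$; since $Q_n$ is smooth the Hessian of this quadric is non-degenerate, so the map $\KK^{n+2}\to H^0(\Oo_{Q_n}(1))\cong\KK^{n+2}$ is an isomorphism, yielding $(\star)$.

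The main obstacle is precisely the cohomological calculation $(\star)$, where the identification of the $H^0$-connecting map as the Hessian matrix is the crux. Everything else---Serre duality, the dualized residue sequence, and the computation of the sections on the $D_j$---is formal once $(\star)$ is in hand.
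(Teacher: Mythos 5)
Your proof is correct, and it takes a genuinely different route from the paper. You argue intrinsically: via Serre duality and the twisted dual residue sequence you exhibit a concrete nonvanishing intermediate cohomology group, $H^{n-1}(\Omega_{Q_n}^1(\log \Dd)(1-n))^\vee \cong H^1(TQ_n(-\log \Dd)(-1)) \cong \bigoplus_j H^0(D_j,\Oo_{D_j}(d_j-1)) \ne 0$, and the key vanishing $(\star)$, namely $H^0(TQ_n(-1))=H^1(TQ_n(-1))=0$, is correctly established: the map $H^0(T\PP^{n+1}(-1)|_{Q_n})\to H^0(\Oo_{Q_n}(1))$ is indeed $(c_i)\mapsto \sum_i c_i\,\partial F/\partial x_i$ (the paper itself uses this description of $T\PP^{3}|_Q\to \Nn_{Q|\PP^3}$ before Proposition \ref{2conics}), and non-degeneracy of the smooth quadric makes it an isomorphism between two spaces of dimension $n+2$. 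Note that once you have nonzero intermediate cohomology you are done; the appeal to Kn\"orrer's classification in your opening sentence is superfluous. The paper instead argues by contradiction through Kn\"orrer: if $TQ(-\log\Dd)$ were ACM, the vanishing of $h^1$ forces $h^0(TQ)\ge \sum_i h^0(\Oo_{D_i}(d_i))$, and a dimension count rules out any $d_i\ge 2$ and bounds the number of hyperplane sections; it then restricts the split logarithmic bundle $\Omega^1_{\PP^{n+1}}(\log\Hh)$ to $Q$ and derives a contradiction from $\Ext^1(\Oo_Q(a),\Oo_Q(-2))=0$. Your argument is more direct and self-contained (no classification theorem, no reduction to hyperplane arrangements), and it even gives the precise value $h^{n-1}(\Omega^1_{Q_n}(\log\Dd)(1-n))=\sum_j h^0(\Oo_{Q_n}(d_j-1))$; the paper's route, on the other hand, ties the statement to the splitting behaviour of logarithmic bundles of hyperplane arrangements, which is the theme it develops in the subsequent sections.
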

\begin{proof}
The dual of an ACM bundle on $Q_n$ is ACM. Let us take $\Dd = D_1\cup \cdots \cup D_s$ and assume
that $TQ (-\log \Dd )$ is a direct sum of line bundles. Setting $d_i=\deg (D_i)$, we have the exact sequence
$$0 \to TQ(-\log \Dd ) \to TQ \to \oplus _{i=1}^{s}  \Oo _{D_i}(d_i) \to 0.$$
Since $TQ(-\log \Dd )$ is ACM, we get $h^0(TQ) \ge \sum _{i=1}^{s} h^0(D_i,\Oo_{D_i}(d_i))$. Note that $h^0(TQ) = (n+2)(n+1)/2$. If $D\in | \Oo _Q(d)|$ with $d\ge 2$, then we have
$$h^0(D,\Oo _D(d)) \ge h^0(D,\Oo _D(2)) \ge  \binom{n+3}{2}-2 > (n+2)(n+1)/2.$$
Thus we have $d_i =1$ for all $i$ and so $s \le (n+1)/2$. Let $H_i\subset \PP ^{n+1}$ be the hyperplane with $D_i = Q\cap H_i$. Since $s \le n$ and $D_1\cup \cdots \cup D_s$
has simple normal crossings, $\Hh := H_1\cup \cdots \cup H_s$ also has simple normal crossings. Hence $\Omega _{\PP^{n+1}}^1(\log \Hh )$ splits into a direct sum
of line bundles without $\Oo _{\PP^{n+1}}(-2)$ as its factor by Theorem \ref{DKthm}.
Now the exact sequence
$$0 \to \Oo _Q(-2)\to \Omega _{\PP^{n+1}}^1(\log \Hh )|Q \to \Omega _Q^1(\log \Dd )\to 0$$
gives a contradiction, since $\Ext^1 (\Oo_Q(a), \Oo_Q(-2))=0$ for all $a\in \ZZ$.
\end{proof}

\begin{proposition}\label{k1}
Let us fix $\Dd=D_1\cup \cdots \cup D_m$ a union of $m$ general elements $D_i \in |\Oo_Q(d)|$ with $d\geq 1$. Let $\Dd'=D_1' \cup \cdots \cup D_m'$ be an arbitrary simple normal crossings configuration with $D_i' \in |\Oo_Q(d)|$ with $\Omega_Q^1(\log \Dd) \cong \Omega_Q^1(\log \Dd')$. If $m\geq \binom{n+1+d}{n+1} -\binom{n-1+d}{n+1}+2$, then we have $\Dd=\Dd'$.
\end{proposition}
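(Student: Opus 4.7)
The plan is to apply Proposition \ref{b1} to $X = Q_n \subset \PP^{n+1}$ (with $\PP^m=\PP^{n+1}$) via the degree $d$ Veronese embedding $\nu_d$, then finish by a cardinality argument. First I would identify the ambient linear span. From the short exact sequence
$$0 \to \Oo_{\PP^{n+1}}(d-2) \to \Oo_{\PP^{n+1}}(d) \to \Oo_Q(d) \to 0,$$
one reads off $h^0(\PP^{n+1},\Ii_Q(d)) = h^0(\PP^{n+1},\Oo_{\PP^{n+1}}(d-2)) = \binom{n-1+d}{n+1}$, adopting the convention that the binomial vanishes for $n-1+d<n+1$. The $\PP^r$ of Proposition \ref{b1} then has dimension
$$r = \binom{n+1+d}{n+1} - \binom{n-1+d}{n+1} - 1,$$
so the hypothesis of the statement translates into $m \ge r+3$, comfortably stronger than the $s \ge r+2$ required by Proposition \ref{b1}.

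Next I would verify the two technical assumptions of Proposition \ref{b1} on the subconfiguration $\Aa := D_1 \cup \cdots \cup D_{r+2}$. Condition (1) requires $D_i \in |W_d|$, and this is automatic: the vanishing $H^1(\PP^{n+1},\Ii_Q(d)) = H^1(\PP^{n+1},\Oo_{\PP^{n+1}}(d-2)) = 0$ gives projective normality of $Q$ in every positive degree, hence $|W_d| = |\Oo_Q(d)|$. Condition (2), that $\nu_d(\Aa)$ is tame in $\nu_d(Q)$, follows from the generality of $\Dd$: under projective normality a general choice of divisors $D_i \in |\Oo_Q(d)|$ corresponds to a general choice of hyperplanes in $\PP^r$, so the induced arrangement $\Hh_\Aa$ has simple normal crossings.

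Then I would apply Proposition \ref{b1} to each $E = D_j' \in \Dd'$. Each $D_j'$ is a smooth irreducible divisor in $|\Oo_Q(d)|$, hence connected. The non-degeneracy assumption of Proposition \ref{b1} --- read as the requirement that $\nu_d(D_j')$ spans its natural linear ambient inside $\PP^r$ --- holds because a reduced irreducible effective divisor of degree $d$ on $Q$ is set-theoretically the zero locus of a unique section of $\Oo_Q(d)$ up to scalar. Proposition \ref{b1} therefore forces $D_j' \in \{D_1,\dots,D_m\}$ for every $j$; since both $\Dd$ and $\Dd'$ have cardinality $m$, this gives $\Dd = \Dd'$.

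The main technical obstacle is the careful handling of the non-degeneracy condition underlying Proposition \ref{b1} in the Veronese setting: one must distinguish between degree $d$ hypersurfaces of $\PP^{n+1}$ and hyperplanes of the linear span of $\nu_d(Q)$, and invoke the uniqueness of defining sections of $\Oo_Q(d)$ for a reduced irreducible divisor. Everything else is a routine numerical check and an application of the already-established Proposition \ref{b1}.
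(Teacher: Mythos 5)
Your numerical reduction is fine and matches the paper's: $h^0(\Ii_Q(d))=\binom{n-1+d}{n+1}$, the span of $\nu_d(Q)$ is a $\PP^r$ with $r=\binom{n+1+d}{n+1}-\binom{n-1+d}{n+1}-1$, projective normality of $Q$ identifies $|\Oo_Q(d)|$ with the hyperplane sections of $\nu_d(Q)$, and the hypothesis is exactly $m\ge r+3$. The gap is in the key result you invoke afterwards. Proposition \ref{b1} requires of $E$ that it be connected and \emph{not contained in any hypersurface of degree $d$ in $\PP^{n+1}$}, and this hypothesis fails for every $E=D_j'\in|\Oo_Q(d)|$: by the very projective normality you use, $D_j'=Q\cap V(F)$ for some degree-$d$ form $F$, so $D_j'$ lies on a degree-$d$ hypersurface. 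Your rereading of the condition as ``$\nu_d(D_j')$ spans its natural linear ambient'' is not what the proof of Proposition \ref{b1} needs: there the condition is used to ensure that $\nu_d(E)$ spans the \emph{whole} $\PP^r$, so that one can pick a non-degenerate curve $T\subseteq\nu_d(E)$ and contradict Lemma \ref{a2} against $\Omega^1_{\PP^r}(\log\Hh)\cong T\PP^r(-1)$. In your situation $\nu_d(D_j')$ spans exactly a hyperplane $H_j'\subset\PP^r$ (indeed $h^0(\Ii_{D_j'}(d))=h^0(\Ii_Q(d))+1$), and the linear form defining $H_j'$ produces, via the Euler sequence, a non-zero section of $\Omega^1_{\PP^r}(1)_{\vert\nu_d(D_j')}$; no contradiction arises, and Proposition \ref{b1} yields nothing about $D_j'$. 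That proposition is designed to handle components of $\Dd'$ of a different nature (not cut out by degree-$d$ forms), not the hyperplane-section components at issue here.

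The correct conclusion of your reduction is the one the paper draws: apply Corollary \ref{k2} (which rests on Proposition \ref{f2}, i.e.\ the Vall\`es-type unstable-hyperplane-section argument using the Steiner resolution, valid precisely for $m\ge r+3$) to $X=\nu_d(Q)\subset\PP^r$ with the hyperplane sections $\nu_d(D_i)$. That statement is tailored to divisors spanning only a hyperplane of $\PP^r$, and the generality of $\Dd$ supplies its hypotheses (tameness of the associated hyperplane arrangement and the absence of a rational normal curve osculated by all the corresponding hyperplanes), neither of which your proposal addresses. With Proposition \ref{b1} replaced by Corollary \ref{k2}, the rest of your argument goes through.
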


\begin{proof}
Let us define $\nu _d: \PP^{n+1} \to \PP ^N$ to be the order $d$ Veronese embedding of $\PP ^{n+1}$ with $N:= \binom{n+1+d}{n+1}-1$. Set $X= \nu _d(Q)$. The linear
span of $X$ spans an $r$-dimensional linear subspace of $\PP^N$, where $r:= \binom{n+1+d}{n+1} -\binom{n-1+d}{n+1}-1$. Since $Q$ is projectively normal, each $D\in |\Oo _Q(d)|$ is the intersection of $Q$ with a hypersurface of degree $d$ in $\PP^{n+1}$. Hence for each $D\in |\Oo _Q(d)|$ there is a unique hyperplane $H\subset \PP ^r$ such that $\nu _d(D) =X\cap H$. Now the assertion follows from Corollary \ref{k2}.
\end{proof}

Our main goal is to consider the logarithmic bundle on $Q$, specially generic injectivity of the map
$$\Phi_d^m : \Dd \mapsto \Omega_Q^1 (\log \Dd)$$
where $\Dd$ is an arrangement of $m$ hypersurfaces in $|\Oo_Q(d)|$. By Proposition \ref{k1} we obtain the following immediate consequence.

\begin{corollary}\label{f3}
The map $\Phi_d^m$ is generically injective for $m\geq \binom{n+1+d}{n+1} -\binom{n-1+d}{n+1}+2$. In particular $\Phi_1^m$ is generically injective for $m\geq n+4$.
\end{corollary}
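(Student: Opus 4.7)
The plan is to read the corollary off Proposition \ref{k1} directly, since that proposition already contains all the substantive content; what remains is only to rephrase its conclusion as generic injectivity of $\Phi_d^m$ and then to specialize the numerical bound to $d=1$.

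For the first step, recall that ``$\Phi_d^m$ is generically injective'' means that there exists a nonempty open subset of the parameter space of simple normal crossings configurations of $m$ elements of $|\Oo_Q(d)|$ on which the assignment $\Dd\mapsto \Omega_Q^1(\log \Dd)$ has fibers of size one. Proposition \ref{k1} furnishes exactly this open subset: the locus of \emph{general} configurations $\Dd$. Indeed, the proposition asserts that if $\Dd$ is general and $\Dd'$ is any simple normal crossings configuration in $|\Oo_Q(d)|$ with $m$ components satisfying $\Omega_Q^1(\log \Dd)\cong \Omega_Q^1(\log \Dd')$, then $\Dd=\Dd'$, provided $m\geq \binom{n+1+d}{n+1}-\binom{n-1+d}{n+1}+2$. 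This is generic injectivity on the nose.

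For the second step, I would specialize the bound to $d=1$. One has $\binom{n+2}{n+1}=n+2$ while $\binom{n}{n+1}=0$, since the upper index is strictly less than the lower. Hence the bound collapses to $(n+2)-0+2=n+4$, giving the second half of the statement.

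There is essentially no obstacle at this stage: all the nontrivial content (the Vall\`es-type reconstruction of hyperplane sections as unstable hypersurfaces, the tameness hypothesis, and the transfer to $Q_n$ via the Veronese embedding) has already been absorbed into Proposition \ref{k1}. The only check is the formal one that a generic configuration in the parameter space meets the ``general'' hypothesis of that proposition, which is automatic.
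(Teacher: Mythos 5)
Your proposal is correct and matches the paper exactly: the paper also derives Corollary \ref{f3} as an immediate consequence of Proposition \ref{k1}, with the $d=1$ case coming from the same computation $\binom{n+2}{n+1}-\binom{n}{n+1}+2=n+4$.
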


So the question on generic injectivity of $\Phi_1^m$ remains for $1\leq m \leq n+3$. The discussion below gives the answer for the case of $m=1$.

\begin{lemma}\label{lem5}
For a point $P\in \PP^{n+1}\setminus Q$, let $\phi_P : Q \to \PP^n$ be the linear projection with the center $P$. Let us choose a smooth conic $C\subset Q$ spanning a plane $M$ not contained in $Q$

\begin{enumerate}
\item If $P \notin M$, then $\phi_P^* (T\PP^n(-1))_{\vert _C}$ has splitting type $(1,1, 0, \cdots, 0)$.
\item If $P\in M$, then $\phi _P^* (T\PP^n(-1))_{\vert_C}$ has splitting type $(2,0,\cdots, 0)$.
\end{enumerate}
In particular, if two points $P$ and $O$ in $\PP^{n+1}\setminus Q$ are distinct, then $\varphi _P^\ast (T\PP^n(-1))$ and $\varphi _O^\ast (T\PP^n(-1))$ are not isomorphic.
\end{lemma}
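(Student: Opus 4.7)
The plan is to analyze $\phi_P^*T\PP^n(-1)|_C$ via the twisted Euler sequence
$$
0\to \Oo_{\PP^n}(-1)\to \Oo_{\PP^n}^{\oplus(n+1)}\to T\PP^n(-1)\to 0
$$
on $\PP^n$. Since $\phi_P$ is defined by the linear forms vanishing at $P$, we have $\phi_P^*\Oo_{\PP^n}(1)=\Oo_Q(1)$. Pulling the sequence back to $Q$ and restricting to $C\cong\PP^1$ (on which $\Oo_Q(1)|_C=\Oo_{\PP^1}(2)$) yields
$$
0\to \Oo_{\PP^1}(-2)\xrightarrow{\sigma}\Oo_{\PP^1}^{\oplus(n+1)}\to \phi_P^*T\PP^n(-1)|_C\to 0,
$$
so the restriction is a globally generated rank-$n$ bundle of degree $2$ on $\PP^1$, necessarily of splitting type $(2,0,\dots,0)$ or $(1,1,0,\dots,0)$. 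The map $\sigma$ is given by the $(n+1)$ sections of $\Oo_{\PP^1}(2)$ that arise from restricting to $C$ a basis of the hyperplane $W\subset H^0(\Oo_{\PP^{n+1}}(1))$ of linear forms vanishing at $P$; let $V\subseteq H^0(\Oo_{\PP^1}(2))$ be the span of these sections.

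The next step is to compute $\dim V$. The kernel of the surjection $H^0(\Oo_{\PP^{n+1}}(1))\to H^0(\Oo_C(1))$ is the $(n-1)$-dimensional space of linear forms vanishing on $M$ (because $C$ spans $M$), and it lies inside $W$ if and only if every linear form vanishing on $M$ also vanishes at $P$, i.e.\ iff $P\in M$. Hence $\dim V=2$ in case (2) and $\dim V=3$ in case (1). After a change of basis in the target $\Oo_{\PP^1}^{\oplus(n+1)}$, the map $\sigma$ takes the form $(\sigma_1,\dots,\sigma_{\dim V},0,\dots,0)$, and the cokernel splits as $\op{coker}(\Oo_{\PP^1}(-2)\to\Oo_{\PP^1}^{\oplus\dim V})\oplus\Oo_{\PP^1}^{\oplus(n+1-\dim V)}$.

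For case (2), $V$ is the pull-back of $H^0(\Oo_L(1))$ along the degree-two morphism $\phi_P|_C\colon C\to L=\phi_P(M)$, so $V$ is base-point-free on $C$; therefore the map $\Oo_{\PP^1}(-2)\to\Oo_{\PP^1}^{\oplus 2}$ has nowhere-vanishing image, and its cokernel is the line bundle $\Oo_{\PP^1}(2)$, giving splitting type $(2,0,\dots,0)$. For case (1), $V=H^0(\Oo_{\PP^1}(2))$ and one reduces to the cokernel $E$ of $\Oo_{\PP^1}(-2)\xrightarrow{(s^2,st,t^2)}\Oo_{\PP^1}^{\oplus 3}$, a rank-$2$ degree-$2$ bundle, hence either $\Oo(2)\oplus\Oo$ or $\Oo(1)^{\oplus 2}$. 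To separate these, I would twist by $\Oo_{\PP^1}(-2)$ and read off the long exact sequence
$$
0\to H^0(E(-2))\to H^1(\Oo_{\PP^1}(-4))\to H^1(\Oo_{\PP^1}(-2)^{\oplus 3}).
$$
By Serre duality on $\PP^1$, the last map is dual to the evaluation $\KK^{\oplus 3}\to H^0(\Oo_{\PP^1}(2))$, $(a,b,c)\mapsto as^2+bst+ct^2$, which is an isomorphism; hence $h^0(E(-2))=0$. This excludes $\Oo(2)\oplus\Oo$ (for which the dimension would be $1$) and forces splitting type $(1,1,0,\dots,0)$. Separating these two splittings is the main subtlety, since rank, degree, and global generation alone do not suffice.

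For the ``in particular'' statement, given $P\neq O$ in $\PP^{n+1}\setminus Q$, I would choose a plane $M\ni P$ with $O\notin M$ and $C=M\cap Q$ a smooth conic. Such $M$ exist: for $n\geq 2$ the family of planes in $\PP^{n+1}$ through $P$ has dimension $2n-2\geq 2$, the subfamily of planes through both $P$ and $O$ (i.e.\ through the line $\overline{PO}$) has codimension $n-1\geq 1$, and the locus of planes for which $M\cap Q$ fails to be a smooth conic is proper closed. Then parts (1) and (2) assign different splitting types to $\phi_P^*T\PP^n(-1)|_C$ and $\phi_O^*T\PP^n(-1)|_C$, so the two bundles on $Q$ cannot be isomorphic.
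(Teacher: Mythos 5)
Your proof is correct, but it takes a genuinely different route from the paper's. The paper argues through the image curve: when $P\notin M$, the map $\phi_P|_C$ is an isomorphism onto a smooth conic and one quotes the known (uniform) splitting type $(1,1,0,\dots,0)$ of $T\PP^n(-1)$ along smooth conics; when $P\in M$, $\phi_P(C)$ is a line, on which $T\PP^n(-1)$ splits as $\Oo(1)\oplus\Oo^{\oplus(n-1)}$, and pulling back through the degree-two map $C\to\phi_P(C)$ doubles the degrees to give $(2,0,\dots,0)$; the final assertion is then declared to follow automatically. You instead work entirely on $C$ with the pulled-back Euler sequence $0\to\Oo_{\PP^1}(-2)\to\Oo_{\PP^1}^{\oplus(n+1)}\to\phi_P^*T\PP^n(-1)|_C\to 0$, reduce everything to the span $V$ of the restricted linear forms vanishing at $P$ (of dimension $3$ or $2$ according as $P\notin M$ or $P\in M$), and then resolve the only genuine ambiguity in the nondegenerate case ($\Oo(1)^{\oplus 2}$ versus $\Oo(2)\oplus\Oo$) by the Serre-duality computation $h^0(E(-2))=0$, while the degenerate case follows from base-point-freeness of the pulled-back pencil. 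What your approach buys is self-containedness: you never invoke the standard restriction behavior of $T\PP^n(-1)$ on lines and conics, at the cost of an explicit cohomological step; you also make explicit, via a dimension count, the existence of a smooth conic section $C=M\cap Q$ with $P\in M$ and $O\notin M$ needed for the last assertion, which the paper leaves implicit.
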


 \begin{proof}
First assume that $P\notin M$. Since $\varphi _P(C)$ is a smooth conic in $\PP^n$ and $T\PP^n(-1)$ is uniform along smooth conics, so $T\PP^n(-1)_{\vert _{\varphi _P(C)}}$ has splitting type $(1,1, 0, \cdots, 0)$. Since $\varphi _P$ induces an isomorphism between $C$ and $\varphi _P(C)$, $\varphi _P^\ast (T\PP^n(-1))_{\vert_{C}}$ has splitting type
$(1,1, 0, \cdots, 0)$.

Now assume $P\in M$. Since $\varphi _P(C)$ is now a line, so $T\PP^n(-1)_{\vert_{ \varphi _P(C)}}$ has splitting type $(1,0, \cdots, 0)$. Since $\varphi _P$ induces a degree two morphism between $C$ and $\varphi _P(C)$, $\varphi _P^\ast (T\PP^n(-1))_{\vert _C}$ has splitting type
$(2,0,\cdots, 0)$.

The last assertion now follows automatically.
\end{proof}

\begin{proposition}\label{v1}
Let $D$ be a smooth hyperplane section of $Q\subset \PP^{n+1}$ with $n\geq 1$. Setting $H$ to be its corresponding hyperplane in $\PP^{n+1}$, let us define two sets as follows:
\begin{align*}
E(D)&:=\{f\in \mathrm{Aut}(\PP^{n+1})~|~f(Q) = Q \text{ and } f(D) =D\}\\
S(D)&:=\{O\in \PP^{n+1}\setminus Q ~|~\forall f\in E(D), f(O)=O\}
\end{align*}
Then we have $S(D)=\{P\}$, where $P$ is the point apolar to $H$ with respect to $Q$.
\end{proposition}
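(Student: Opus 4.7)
The plan is to place the pair $(Q,H)$ in a normal form in which $E(D)$ can be computed explicitly, then read off its fixed locus. Since $D=Q\cap H$ is smooth, $H$ is not tangent to $Q$, so we may choose homogeneous coordinates $[x_0:\cdots:x_{n+1}]$ on $\PP^{n+1}$ in which $Q=\{x_0^2+x_1^2+\cdots+x_{n+1}^2=0\}$ and $H=\{x_0=0\}$. In this frame the apolar point of $H$ with respect to the identity matrix of $Q$ is simply $P=[1:0:\cdots:0]$, which visibly lies off $Q$.

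First I would verify the inclusion $P\in S(D)$: apolarity depends only on the pair $(Q,H)$, and every $f\in E(D)$ preserves both, so $f$ must fix the pole of $H$ with respect to $Q$. The substantive direction is the reverse. Lifting a general $f\in E(D)$ to a matrix $M\in GL_{n+2}$, the condition $f(Q)=Q$ reads $M^TM=\lambda I$ for some $\lambda\in\KK^\ast$, and $f(H)=H$ forces the first row of $M$ to be a scalar multiple of $e_0^T$. Hence $M$ has block form $\begin{pmatrix} c & 0\\ v & B\end{pmatrix}$, and substituting into $M^TM=\lambda I$ yields $v^TB=0$ and $B^TB=\lambda I$; invertibility of $B$ forces $v=0$, and after rescaling $M$ we may take $c=1$ and $B\in O(n+1)$. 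Thus $E(D)\cong O(n+1)$, acting on $\PP^{n+1}$ by $[x_0:x']\mapsto[x_0:Bx']$ with $x'=(x_1,\ldots,x_{n+1})$.

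To conclude, I would observe that a point $[x_0:x']\in\PP^{n+1}$ is fixed by every element of $O(n+1)$ if and only if either $x'=0$, giving $P$, or the line $\KK x'\subset\KK^{n+1}$ is $O(n+1)$-invariant. The standard representation of $O(n+1)$ on $\KK^{n+1}$ is irreducible over $\KK$ for $n\geq 1$, so no such nontrivial invariant line exists; hence $P$ is the only fixed point of $E(D)$ in $\PP^{n+1}$, and since $P\notin Q$ we obtain $S(D)=\{P\}$. The main subtlety I anticipate is checking irreducibility in the low-dimensional case $n=1$: there $SO(2)$ preserves each of the two isotropic directions of the form, but the reflection components of $O(2)\setminus SO(2)$ swap them, so $O(2)$ still acts irreducibly on $\KK^2$. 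The block-matrix computation of $E(D)$ and the coordinate normalization are routine, so this representation-theoretic step is the only place where a small argument is needed.
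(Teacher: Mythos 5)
Your proof is correct, but it takes a genuinely different route from the paper. The paper argues synthetically and by contradiction: assuming a second fixed point $O\ne P$, it passes to the polar hyperplane of $O$, handles $n=1$ via the $3$-transitivity of $\mathrm{Aut}(\PP^1)$, and then runs an induction on $n$ by restricting to hyperplanes through $P$ transversal to $Q$, using surjectivity of the restriction map $E(D)\to E(D\cap M)$. You instead compute the stabilizer directly: after normalizing $Q=\{\sum x_i^2=0\}$, $H=\{x_0=0\}$, the block computation identifies $E(D)$ with $\{[x_0:x']\mapsto [x_0:Bx'],\ B\in O(n+1,\KK)\}$, and the fixed-locus question reduces to the absence of $O(n+1)$-invariant lines in $\KK^{n+1}$, i.e.\ irreducibility of the standard representation (with the $n=1$ case correctly settled by noting that reflections swap the two isotropic lines preserved by $SO(2)$). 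Your argument is shorter and more transparent, and it proves something slightly stronger — $P$ is the unique fixed point of $E(D)$ in all of $\PP^{n+1}$, not just in $\PP^{n+1}\setminus Q$ — whereas the paper's induction stays coordinate-light (mostly) and reuses the polarity geometry that appears elsewhere in that section. Two small points you should make explicit: (i) the definition of $E(D)$ only imposes $f(D)=D$, so you need the one-line remark (present in the paper) that $D$ spans $H$, hence $f(H)=H$, before you can impose the hyperplane condition on $M$; and (ii) passing from $f(Q)=Q$ to $M^TM=\lambda I$ uses that the quadratic form is irreducible (true since $Q$ is smooth of dimension $n\ge 1$), so that equality of zero sets forces proportionality of the forms.
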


\begin{proof}
Let us define $E(Q):= \{f\in \mbox{Aut}(\PP^{n+1})~|~ f(Q) = Q\}$. The restriction of $E(Q)$ to $Q$ induces a surjection onto $\mbox{Aut}(Q)$. Since $D$ spans $H$ and each element of $E(D)$ is a projective isomorphism, so we have
$$E(D) = \{f\in \mbox{Aut}(\PP^{n+1})~|~ f(Q) = Q, f(H) =H\}.$$
Since the point $P$ is apolar to $H$, we have $P\in S(D)$. Now let us assume the existence of a point $O\in S(D)$ with $O\ne P$ and let $H'\subset \PP^{n+1}$ be the hyperplane polar to $O$ with respect to $Q$. Let us set $D':= H'\cap Q$ and then $D'$ is a smooth quadric hypersurface of $Q$, since $O$ is not contained in $Q$. Now for every $f\in E(D)$ we have $f(Q)=Q$ and $f(O) =O$. So we have $f(D')=D'$ and $f(H')=H'$, which implies that $E(D') \supseteq E(D)$.

First assume $n=1$. In this case $D$ is formed by two points
$O_1,O_2$ and the lines $L_i$, $i=1, 2$, spanned  by $P$ and $O_i$ are tangent to $Q$. Note that the restriction of $E(D)$ to $Q$ induces a surjection onto $\mbox{Aut}(Q)$, $Q\cong \PP^1$ and $\mbox{Aut}(\PP ^1)$ is $3$-transitive. So there is $g\in E(D)$ such that $g(O_1) =O_1$, $g(O_2) =O_2$, implying $g\in E(D)$, but $g$ sends one of the points of $D'$
to a point of $Q\setminus (D\cup D')$. Hence $g\notin E(D')$, a contradiction.

Now assume $n\ge 2$. Let $L$ be the line spanned by $O$ and $P$. Since $P$ is not contained in $Q$, we have $L\nsubseteq Q$. Since $f(P)=P$ and $f(O) =O$,
we also have $f(L) = L$. Let us fix a hyperplane $M$ containing $P$ such that $Q\cap M$ is
a smooth quadric hypersurface of $M$. Since $P\in M$, $Q\cap M$ is smooth and $H$ is polar to $P$, $M\cap H$ is polar
to $P$ with respect to $Q\cap M$. Since $P\notin Q$, we get that $D\cap M$ is smooth. Let us use $E(D\cap M)$ and $S(D\cap M)$ for the object constructed as above with respect
to the ambient projective space. Taking homogeneous coordinates $x_0,\dots ,x_{n+1}$ with $Q = \{\sum _{i=0}^{n+1} x_i^2=0\}$ and $P= (0,0,\dots ,0,1)$, we
get that every automorphism of $Q\cap M$ sending $D\cap M$ into itself is the restriction of an automorphism of $Q$ sending $D$ into itself. Hence the restriction
map $E(D) \to E(D\cap M)$ is surjective. By induction on $n$ we get $M\cap E(D) =\{P\}$.  Since this is true for all $M$ containing $P$ and transversal to $Q$, we get that
$L$ is tangent to $Q$, i.e.  $L\cap Q$ is a unique point, say $P'$. Since $H$ is polar to $P$ and $P\in L$, we have $P'\in D$. Since $f(D) =D$ and $f(L)=L$, we get
$f(A) =A$ for all $A\in L$. Since $E(D)$ is transitive on $D$, we get that $S(D)$ is the cone with vertex $P$ and $D$ as its basis. Taking a general plane $N$ containing
$P$ and repeating the proof of the case $n=1$, we get a contradiction.
\end{proof}

\begin{corollary}\label{cor5}
Let $\Dd=\{ D\}$ be a smooth hyperplane section of $Q$ with the corresponding hyperplane arrangement $\Hh_{\Dd}=\{H\}$ in $\PP^{n+1}$. Then we have
$$\Omega_Q^1(\log \Dd) \cong \phi_P^* (T\PP^{n}(-2)),$$
where $P$ is the point apolar to $H$ with respect to $Q$. In particular, the map $\Phi_1^1$ is injective.
\end{corollary}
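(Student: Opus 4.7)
The plan is to realize both $\Omega_Q^1(\log D)$ and $\phi_P^*(T\PP^n(-2))$ as cokernels of injections $\Oo_Q(-2) \hookrightarrow \Oo_Q(-1)^{n+1}$ and to show that the two maps agree up to a nonzero scalar; the injectivity of $\Phi_1^1$ will then follow from Lemma \ref{lem5}.

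After a linear change of coordinates we may assume $Q = \{q := \sum_{i=0}^{n+1}x_i^2 = 0\}$, $P = [1:0:\cdots :0]$ and $H = \{x_0 = 0\}$, so that $\phi_P$ is $[x_0:\cdots :x_{n+1}] \mapsto [x_1:\cdots :x_{n+1}]$ and $\phi_P^*\Oo_{\PP^n}(1) = \Oo_Q(1)$. Pulling the twisted Euler sequence on $\PP^n$ back along $\phi_P$ produces
\begin{equation*}
0 \to \Oo_Q(-2) \xrightarrow{(x_1,\ldots,x_{n+1})} \Oo_Q(-1)^{n+1} \to \phi_P^* T\PP^n(-2) \to 0.
\end{equation*}
For the other bundle, Theorem \ref{DKthm} with $m = 1$ gives $\Omega_{\PP^{n+1}}^1(\log H) \cong \Oo_{\PP^{n+1}}(-1)^{n+1}$; after twisting by $\Oo(1)$ an explicit framing is provided by the global sections $\omega_i := (x_i\,dx_0 - x_0\,dx_i)/x_0$ for $i = 1,\ldots ,n+1$, whose residues along $H$ are the basis $\{x_i|_H\}$ of $H^0(\Oo_H(1))$. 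The transversal intersection $D = Q \cap H$ yields the log cotangent sequence
\begin{equation*}
0 \to N^*_{Q/\PP^{n+1}} \to \Omega_{\PP^{n+1}}^1(\log H)|_Q \to \Omega_Q^1(\log D) \to 0,
\end{equation*}
i.e.\ $0 \to \Oo_Q(-2) \to \Oo_Q(-1)^{n+1} \to \Omega_Q^1(\log D) \to 0$ whose conormal arrow sends $1$ to $dq|_Q$.

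The comparison is the decisive step. Using $dx_i = x_i\,dx_0/x_0 - \omega_i$ for $i \geq 1$ and Euler's identity $\sum_{i=0}^{n+1} x_i\partial_iq = 2q$, a direct computation on the chart $\{x_0 \neq 0\}$, extended globally by density and residue matching, gives
\begin{equation*}
dq = \frac{2q}{x_0}\,dx_0 - 2\sum_{i=1}^{n+1}x_i\,\omega_i
\end{equation*}
as sections of $\Omega_{\PP^{n+1}}^1(\log H)(2)$. Restricting to $Q$ (where $q = 0$) yields $dq|_Q = -2\sum_{i=1}^{n+1}x_i\,\omega_i|_Q$, so in the framing $(\omega_i)$ the conormal map is $-2(x_1,\ldots,x_{n+1})|_Q$, a nonzero scalar multiple of the Euler map in the first sequence. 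Hence the two cokernels coincide, giving $\Omega_Q^1(\log D) \cong \phi_P^* T\PP^n(-2)$. Finally, if $\Omega_Q^1(\log D) \cong \Omega_Q^1(\log D')$ for smooth hyperplane sections $D,D'$, then $\phi_P^*T\PP^n(-2) \cong \phi_{P'}^*T\PP^n(-2)$; twisting by $\Oo_Q(1)$ and applying Lemma \ref{lem5} forces $P = P'$, hence $D = D'$ since $D$ is recovered as the polar hyperplane section of $P$. The main obstacle is fixing the explicit framing of $\Omega_{\PP^{n+1}}^1(\log H)$ by the $\omega_i$ and carrying out the twist-bookkeeping in the identity for $dq$; the transversality of $Q \cap H$ is also essential for the log cotangent sequence.
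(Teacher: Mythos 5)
Your proposal is correct, and it reaches the identification of the apolar point by a genuinely different mechanism than the paper. Both arguments use the same two presentations: the restriction sequence $0 \to \Oo_Q(-2) \to \Omega^1_{\PP^{n+1}}(\log \Hh_{\Dd})_{\vert_Q} \cong \Oo_Q(-1)^{\oplus (n+1)} \to \Omega^1_Q(\log \Dd) \to 0$ and the pulled-back Euler sequence presenting $\phi_P^*(T\PP^n(-2))$. The paper stops at the qualitative observation that the first sequence exhibits $\Omega^1_Q(\log \Dd)$ as $\phi_O^*(T\PP^n(-2))$ for \emph{some} $O \in \PP^{n+1}\setminus Q$, uses Lemma \ref{lem5} for the uniqueness of $O$, and then pins down $O=P$ by the equivariance/fixed-point result of Proposition \ref{v1} (the point $O$ is intrinsic to the bundle, hence lies in $S(D)=\{P\}$). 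You instead normalize coordinates ($q=\sum x_i^2$, $H=\{x_0=0\}$, $P=[1:0:\cdots:0]$ --- legitimate because $\Aut(\PP^{n+1})$ preserving $Q$ acts transitively on smooth hyperplane sections, equivalently on points off $Q$, and the statement is equivariant) and compute the conormal arrow explicitly in the frame $\omega_i$, finding it equal to $-2(x_1,\dots ,x_{n+1})$, a nonzero scalar multiple of the Euler map of the projection from the apolar point; this bypasses Proposition \ref{v1} entirely, while Lemma \ref{lem5} is still needed, and is correctly invoked after the twist by $\Oo_Q(1)$, for the injectivity of $\Phi_1^1$. Your route buys an explicit, self-contained identification of the point at the cost of a coordinate computation; the paper's route is coordinate-free but leans on the automorphism analysis of Proposition \ref{v1}. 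Two small points worth tightening: the identity $dq = \frac{2q}{x_0}\,dx_0 - 2\sum_{i\geq 1} x_i\,\omega_i$ should be read as an identity of rational log forms checked on the chart $\{x_0\neq 0\}$, since the naive homogeneous expression $dq$ only satisfies the Euler relation after restriction to $Q$ (which is exactly where you use it, as $q_{\vert_Q}=0$); and the reduction to standard coordinates deserves a one-line appeal to transitivity of the orthogonal group on non-isotropic points.
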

\begin{proof}
Note that we have $\Omega_{\PP^{n+1}}^1 (\log \Hh_{\Dd}) \cong \Oo_{\PP^{n+1}}(-1)^{\oplus (n+1)}$ and so the following exact sequence
$$0\to \Oo_Q(-2) \to \Oo_Q(-1)^{\oplus (n+1)} \to \Omega_Q^1 (\log \Dd) \to 0.$$
It implies that $\Omega_Q^1 (\log \Dd) \cong \phi_O^* (T\PP^n(-2))$ for some $O\in \PP^{n+1}\setminus Q$ and $O$ is the unique such point by Lemma \ref{lem5}. In the set-up of Proposition \ref{v1} we have
$O\in S(D)$. By Proposition \ref{v1}, the point $O$ must be the point $P$ apolar to $H$ with respect to $Q$.
\end{proof}

\begin{remark}
Note that the $\Omega_Q^1 (\log \Dd)$ associated to a smooth hyperplane section on $Q$ is stable. In the case of $n=2$, it is an element of the moduli space $\mathbf{M}_{(-1,-1)}(1)$ of stable bundles of rank $2$ on $Q$ with $c_1=\Oo_Q(-1,-1)$ and $c_2=1$. In \cite{Huh}, it is proven that $\mathbf{M}_{(-1,-1)}(1)$ is isomorphic to $\PP^3 \setminus Q$. Indeed for a stable bundle $\Ee \in \mathbf{M}_{(-1,-1)}(1)$, define $D$ to be a set of points $P\in Q$ for which there exists a section of $\Ee (1)$ whose zero is $2P$. Then $D$ turns out to be a conic on $Q$ and it gives the inverse of the isomorphism map
$$\Phi_1^1: \{ \text {the smooth conics on } Q\} \to \mathbf{M}_{(-1,-1)}(1).$$
\end{remark}

%%%%%%%%%%%%%%%%%%%%%%%%%%%%%%%%%%%%%%%%%%

\section{Non-tame configuration}

 For any reduced effective divisor $D$ on a smooth manifold $X$
let $\widetilde{\Omega} _X^1(\log D)$ denote the logarithmic sheaf studied in \cite{D}. Let us consider the case of $X=\PP^r$ with $r\geq 3$.

\begin{definition}
$\Hh := H_1\cup \cdots \cup H_m$ be a union of $m$ distinct hyperplanes in $\PP^r$. We say that $\Hh$ has {\it normal crossings outside finitely many points} if there
is a finite set $S\subset \PP^r$ such that $\Hh _{\vert_{\PP^r \setminus S}}$ has normal crossings, or equivalently if for each $s\in \{1,\dots ,r-2\}$ and each $s$-dimensional linear
subspace $L \subset \PP^r$, at most $r-s$ of the hyperplanes $H_i$ contain $L$.
\end{definition}

 Let us denote by $\Sigma (\Hh)$ the set of all points $P\in \PP^r$ that are contained in at least $r+1$ hyperplane $H_i$. If $\Hh$ has normal crossings outside finitely many points, then we have that $\Sigma (\Hh)$ is finite and that $\Hh$ has normal crossings
if and only if $\Sigma (\Hh ) =\emptyset$ by definition. In particular $\Hh$ with $m$ hyperplanes always has normal crossings if $m\le r$.

Now let $\Hh_m$ be a hyperplane arrangement with $m$ hyperplanes with normal crossings outside finitely many points and set $\Ff _m:= \widetilde{\Omega} _{\PP^r}^1(\log \Hh _m)$. Since
$r \ge 3$ and $\Hh$ has normal crossings outside finitely many points, we have $\Ff _m:= \Omega _{\PP^r}^1(\log \Hh _m)$ by Corollary 2.8 in \cite{D} and in particular $\Ff _m$ is reflexive.
Note that $\Ff _m$ is not locally free at the points in $\Sigma (\Hh _m)$.

If $m\ge r+2$, then $\Ff _m$ fits into a Steiner's exact sequence
$$0 \to \Oo _{\PP^r}(-1)^{\oplus (m-r-1)} \to \Oo _{\PP ^r}^{\oplus (m-1)} \to \Ff _m \to 0$$
by Theorem 3.1 in \cite{D}. If $m\le r$, then $\Hh _m$ has normal crossings and so the sheaf $\Ff _m$ is locally free. Indeed we have $\Ff_m \cong \Oo _{\PP^r}^{\oplus (m-1)}\oplus \Oo _{\PP ^r}(-1)^{\oplus (r-m+1)}$ by Theorem \ref{DKthm}.

Only the case $m=r+1$ has a small query if $\Hh _{r+1}$ does not have normal crossings at some points, i.e. it is formed by $r+1$ hyperplanes through the same point $P\in \PP^r$, but has normal crossings everywhere else.  Note that any two such configurations are projectively equivalent if we do not fix the quadric hypersurface $Q$. Since $h^0(\Ff _r) =r-1$ and $h^0(\Ff _{r+2}) =r+1$,
the exact sequences
\begin{equation}\label{eqm0}
0 \to \Ff _r \to \Ff _{r+1} \to \Oo _{H_{r+1}}\to 0
\end{equation}
$$0 \to \Ff _{r+1} \to \Ff _{r+2} \to \Oo _{H_{r+2}}\to 0$$
give $h^0(\Ff _{r+1}) =r$. Hence we have a map $\beta :\Oo _{\PP^r}^{\oplus r} \to \Ff _{r+1}$, which is bijective on global section. From these two exact sequences it follows that
the sheaf $\Ff_{r+1}$ has the same Segre classes as the vector bundle $\Oo _{\PP^r}^{\oplus r}$ corresponding to the simple normal crossings case. By Proposition 3.2 in \cite{D}, $\Ff _{r+1}$
is not locally free and so $\beta$ is not an isomorphism. Since $\Ff _{r+1}$ is reflexive with $c_1(\Ff _{r+1}) =0$, so $\beta$ cannot be injective, i.e. we have $\mathrm{rank} (\mbox{Im}(\beta )) \le r-1$.
We also have $\mathrm{rank} (\mbox{Im}(\beta))\ge r-1$ since $\Ff _r \cong \Oo _{\PP^r}^{\oplus (r-1)}\oplus \Oo _{\PP^r}(-1)$ and $\Ff _r\subset \Ff _{r+1}$. Thus $\mbox{Im}(\beta)$ is a rank $(r-1)$ torsion-free subsheaf of $\Ff _{r+1}$ with $h^0(\mbox{Im}(\beta ))=r$. Hence it fits into an exact sequence
$$0 \to \Oo _{\PP^r}(-c) \to \Oo _{\PP^r}^{\oplus r} \to \mbox{Im}(\beta ) \to 0$$for some $c>0$.

Now let $r:= n+1$ and fix a smooth quadric hypersurface $Q$ in $\PP^r$. Assume also that $\Dd = \Hh _{r+1}\cap Q$ has simple normal crossings. In particular, the common point $P$ of $H_i$'s is not contained in $Q$ and so $\Ff _{r+1}$
is locally free in a neighborhood of $Q$.
We have the exact sequence
\begin{equation}\label{eqm1}
0 \to \Oo _Q(-2) \to {\Ff _{r+1}}_{\vert_Q} \to \Omega _Q^1(\log \Dd )\to 0.
\end{equation}
From (\ref{eqm1}) we get $h^0(\Omega _Q^1(\log \Dd )) = h^0({\Ff _{r+1}}_{\vert_Q})$. Note that $\Ff _{r+1} \subset \Ff _{r+2}$ and $\Ff_{r+2}$ is a Steiner sheaf and so we get
$h^0(\Ff _{r+1}(-2))=0$. It implies that the restriction map
$$\rho :H^0(\Ff _{r+1})\to H^0({\Ff _{r+1}}_{\vert_Q})$$ is injective. Since $h^0(\Ff_r(-2))=0$, the sequence (\ref{eqm0}) gives $h^1(\Ff_{r+1}(-2))=0$. It implies the surjectivity of $\rho$ and so it is an isomorphism.

Let $\Gg \subset \Ff _{r+1}$ be the image of the evaluation map $H^0({\Ff _{r+1}}_{\vert_Q}) \otimes \Oo_Q  \to {\Ff _{r+1}}_{\vert_Q}$.

\begin{proposition}\label{ut1}
For an arrangement $\Dd$ of $m$ hyperplane sections in $Q_2$ with simple normal crossings, the logarithmic bundle $\Omega_Q^1 (\log \Dd)$ is not globally generated only if $m \leq r+1$.
\end{proposition}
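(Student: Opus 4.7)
The plan is to prove the contrapositive: when $m\geq r+2$, the logarithmic bundle $\Omega_Q^1(\log \Dd)$ is globally generated. The Steiner resolution of $\Ff_m:=\Omega_{\PP^r}^1(\log \Hh_m)$ recalled in the preamble will be the main input.

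First, I would verify that the simple normal crossings hypothesis on $\Dd$ forces every point of $\Sigma(\Hh_m)$ to lie off $Q$: if some $P\in Q$ were contained in three or more of the hyperplanes $H_i$, then the same number of curves $D_i=Q\cap H_i$ would pass through $P$, contradicting simple normal crossings of $\Dd$ on the surface $Q_2$. Hence $\Hh_m$ has normal crossings outside finitely many points, all of which lie in $\PP^r\setminus Q$, and Theorem 3.1 of \cite{D} furnishes the Steiner exact sequence
$$0\to \Oo_{\PP^r}(-1)^{\oplus(m-r-1)}\to \Oo_{\PP^r}^{\oplus(m-1)}\to \Ff_m\to 0,$$
so $\Ff_m$ is a quotient of a trivial bundle and is therefore globally generated on $\PP^r$.

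Restricting the above sequence to $Q$ is right exact, so ${\Ff_m}_{\vert_Q}$ is a quotient of $\Oo_Q^{\oplus(m-1)}$ and remains globally generated on $Q$. Moreover, because $\Sigma(\Hh_m)\cap Q=\emptyset$, $\Ff_m$ is locally free in a neighborhood of $Q$, and each $H_i$ is transverse to $Q$ (as $D_i$ is smooth); hence the logarithmic conormal sequence
$$0\to \Oo_Q(-2)\to {\Ff_m}_{\vert_Q}\to \Omega_Q^1(\log \Dd)\to 0$$
is valid, exactly as was used in the proof of Proposition \ref{f2}. This presents $\Omega_Q^1(\log \Dd)$ as a quotient of the globally generated sheaf ${\Ff_m}_{\vert_Q}$, whence it is itself globally generated.

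The main verification is that the simple normal crossings of $\Dd$ does double duty: it pushes the singular locus of $\Ff_m$ off $Q$, so that ${\Ff_m}_{\vert_Q}$ is a bundle on which the conormal sequence makes sense, and it delivers the transversality of each $H_i$ with $Q$ that the same sequence requires. Once these two hypotheses are secured, global generation propagates formally through the two successive surjections, and no delicate cohomological vanishing argument is needed.
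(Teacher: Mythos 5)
Your argument is correct and it proves exactly what the statement literally asserts, namely the contrapositive: if $m\geq r+2$ and $\Dd$ has simple normal crossings, then $\Omega_Q^1(\log\Dd)$ is globally generated. One step you leave implicit is worth spelling out: for $r=3$ the condition ``normal crossings outside finitely many points'' means that no line lies in three of the $H_i$, and to deduce it from the simple normal crossings of $\Dd$ you should add that every line of $\PP^3$ meets the quadric $Q$, so a bad line (or a point of $\Sigma(\Hh_m)$ on $Q$) would produce a point of $Q$ lying on at least three of the curves $D_i$, which is forbidden on a surface. With that remark the rest of your chain is sound and matches tools the paper itself invokes: the Steiner resolution of $\Ff_m$ from Theorem 3.1 of \cite{D}, right-exactness of restriction to $Q$, the residue-type sequence $0\to \Oo_Q(-2)\to {\Ff_m}_{\vert_Q}\to \Omega_Q^1(\log\Dd)\to 0$ (as in (\ref{eqm1}) and Lemma \ref{prop2}), and the fact that a quotient of a globally generated sheaf is globally generated. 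However, your route is genuinely different from the printed proof. The paper dismisses the range $m\geq r+2$ with the single sentence ``it suffices to consider the case $m=r+1$'' --- presumably on the strength of exactly the Steiner presentation you spell out --- and then spends all of its effort on the boundary case $m=r+1$, using the maps $\beta$ and $\rho$ and the evaluation image $\Gg\subset {\Ff_{r+1}}_{\vert_Q}$ constructed just before the proposition to show that global generation there is equivalent to the natural map $u:\Gg\to\Omega_Q^1(\log\Dd)$ being an isomorphism; this is the form in which the proposition is actually cited later (Section 6, the non-tame case $m=4$, where failure of global generation is asserted). So your proof buys a clean, self-contained verification of the stated implication, filling in precisely the case the paper's proof takes for granted, while the paper's argument is aimed at the complementary information about the non-tame case $m=r+1$, which your proof does not (and, for the statement as written, need not) address.
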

\begin{proof}
It suffices to consider the case of $m=r+1$.
Since the map $\beta$ has rank $r-1$ and $\rho$ is bijective, $\Gg$ has at most rank $r-1$. We have $h^0(\Gg )=r$ and the natural map $u: \Gg \to \Omega _Q^1(\log \Dd )$ induces
a bijection on global sections. Note that every surjective map from a torsion-free sheaf of rank at most $r$ onto a vector bundle of rank $r$ is an isomorphism. Thus if $u$ is not an isomorphism, then $\Omega _Q^1(\log \Dd )$ is not globally generated.
\end{proof}

%%%%%%%%%%%%%%%%%%%%%%%%%%%%%%%%
\section{Smooth quadric surface}

In this section, our main goal is to investigate the logarithmic bundles on a smooth quadric surface $Q=Q_2$ and specially the generic injectivity of the mapping
$$\Phi_{(a,b)}^m : \Dd \mapsto \Omega_Q^1 (\log \Dd)$$
where $\Dd$ is an arrangement of $m$ hypersurfaces of bidegree $(a,b)$ in $Q$ with simple normal crossings.

\begin{remark}
Let us assume that $\Dd=\{D_1, \cdots, D_m\}$ is an arrangement of smooth curves $D_i$ of bidegree $(a_i, b_i)$ on $Q$ with simple normal crossings. From the sequence (\ref{seq1}), we have
\begin{align*}
c_1(\Omega_Q^1(\log \Dd))&=(-2+\alpha , -2+\beta),\\
c_2(\Omega_Q^1(\log \Dd))&=4-2\alpha-2\beta+\sum _i 2a_ib_i + \sum_{i < j} (a_ib_j+a_jb_i),
\end{align*}
where $\alpha=\sum a_i$ and $\beta=\sum b_j$.
\end{remark}

Let us start with the arrangement with simplest hypersurfaces.

\begin{proposition}\label{line}
Let $\Dd=\{A_1, \cdots, A_a, B_1, \cdots, B_b\}$ be an arrangement of $a+b$ lines on $Q$ with $A_i \in |\Oo_Q(1,0)|$ and $B_j\in |\Oo_Q(0,1)|$. Then we have
$$\Omega_Q^1 (\log \Dd) \cong \Oo_Q(-2+a,0)\oplus \Oo_Q(0,-2+b).$$
\end{proposition}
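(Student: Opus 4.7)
The plan is to exploit the identification $Q\cong\PP^1\times\PP^1$ and reduce the problem to a computation on each factor. Let $p_1,p_2\colon Q\to\PP^1$ be the two projections, so that $\Oo_Q(s,t)=p_1^*\Oo_{\PP^1}(s)\otimes p_2^*\Oo_{\PP^1}(t)$. Because the $A_i\in|\Oo_Q(1,0)|$ are smooth and pairwise distinct, each $A_i=p_1^{-1}(\alpha_i)$ for distinct points $\alpha_i\in\PP^1$; similarly $B_j=p_2^{-1}(\beta_j)$ for distinct $\beta_j\in\PP^1$. Writing $\Aa=\{\alpha_1,\dots,\alpha_a\}$ and $\Bb=\{\beta_1,\dots,\beta_b\}$, we have $\Dd=p_1^{-1}(\Aa)\cup p_2^{-1}(\Bb)$, so the arrangement itself has product form.

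The heart of the argument is the product splitting
\[
\Omega_Q^1(\log\Dd)\cong p_1^*\Omega_{\PP^1}^1(\log\Aa)\oplus p_2^*\Omega_{\PP^1}^1(\log\Bb).
\]
I would verify this locally: around any point of $Q$ one may choose local coordinates $u,v$ pulled back from the two factors, and then $\Dd$ is cut out by $f(u)g(v)=0$, where $f,g$ are local equations for $\Aa$ and $\Bb$. The standard local description of logarithmic differentials then gives a free generating set consisting of one $u$-differential (either $du$ or $df/f$) together with one $v$-differential (either $dv$ or $dg/g$), and this separates cleanly into a horizontal and a vertical piece. The splitting is compatible under the transition functions of the product structure and hence globalizes to the direct-sum decomposition above.

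It remains to compute on $\PP^1$. From the residue sequence
\[
0\to\Omega_{\PP^1}^1\to\Omega_{\PP^1}^1(\log\Aa)\to\bigoplus_{i=1}^{a}\Oo_{\alpha_i}\to 0
\]
we read off first Chern class $-2+a$, and since $\Omega_{\PP^1}^1(\log\Aa)$ is a line bundle on $\PP^1$ it must equal $\Oo_{\PP^1}(a-2)$. Symmetrically, $\Omega_{\PP^1}^1(\log\Bb)\cong\Oo_{\PP^1}(b-2)$. Pulling back to $Q$ and summing gives
\[
\Omega_Q^1(\log\Dd)\cong\Oo_Q(-2+a,0)\oplus\Oo_Q(0,-2+b),
\]
as required. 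The only non-routine point is the product splitting of the logarithmic cotangent sheaf; once that is in hand, the rest is bookkeeping with the residue sequence on $\PP^1$.
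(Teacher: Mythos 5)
Your proof is correct, but it follows a genuinely different route from the paper. The paper never uses the product structure of $Q$ beyond bookkeeping with bidegrees: it works directly with the residue sequence on $Q$, computes the relevant extension groups ($\Ext^1(\Oo_{A_i},\Oo_Q(0,-2))=\Ext^1(\Oo_{B_j},\Oo_Q(-2,0))=0$, while the remaining $\Ext^1$'s are one-dimensional), and identifies $\Omega_Q^1(\log\Dd)$ by induction on the number of lines as the unique extension with the prescribed class, starting from the case of a single line. Your argument instead proves the K\"unneth-type splitting $\Omega_Q^1(\log\Dd)\cong p_1^*\Omega_{\PP^1}^1(\log\Aa)\oplus p_2^*\Omega_{\PP^1}^1(\log\Bb)$ for a divisor that is a union of pullbacks from the two factors, and then only needs the trivial degree count on $\PP^1$. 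This is sound: the local frame $\{du \text{ or } df/f\}\cup\{dv \text{ or } dg/g\}$ does split into horizontal and vertical parts, and the cleanest way to globalize (slightly better than invoking ``compatibility with transition functions'') is to observe that there is a canonical global map $p_1^*\Omega_{\PP^1}^1(\log\Aa)\oplus p_2^*\Omega_{\PP^1}^1(\log\Bb)\to\Omega_Q^1(\log\Dd)$, given by pullback of logarithmic forms followed by the inclusion of log sheaves for the subdivisors $p_1^{-1}(\Aa),p_2^{-1}(\Bb)\subseteq\Dd$, and that this map carries local frames to local frames, hence is an isomorphism. What your approach buys is generality and transparency: it works verbatim for any product of varieties and any arrangement pulled back from the factors, and it explains \emph{why} the answer splits, whereas the paper's Ext-computation is more ad hoc but stays within the extension-theoretic machinery it reuses elsewhere (e.g.\ in Proposition \ref{2conics}); your reduction also handles the degenerate cases $a=0$ or $b=0$ uniformly, since $\Omega_{\PP^1}^1(\log\emptyset)=\Oo_{\PP^1}(-2)$.
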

\begin{proof}
Let us first consider the case of $(a,b)=(1,0)$. Then we have the sequence
$$0\to \Oo_Q(-2,0)\oplus \Oo_Q(0,-2) \to \Omega_Q^1 (\Dd) \to \Oo_{A_1} \to 0.$$
Note that the dimension of $\Ext^1 (\Oo_{A_1}, \Oo_Q(-2,0))$ is $h^1(\Oo_{A_1}(-2))=1$ and similarly we have $\Ext^1 (\Oo_{A_1}, \Oo_Q(0,-2))=0$. Thus there exists a uniquely determined extension of $\Oo_{A_1}$ by $\Omega_Q^1$ and it must be $\Oo_Q(-1,0)\oplus \Oo_Q(0,-2)$. Now assume that the assertion is true for $(a,0)$ to use induction. For the case of $(a+1,0)$, we have the sequence
$$0\to \Oo_Q(-2+a, 0)\oplus \Oo_Q(0,-2) \to \Omega_Q^1(\log \Dd) \to \Oo_{A_{a+1}} \to 0.$$
By the same computation as above, we have the unique such extension and so $\Omega_Q^1 (\log \Dd ) \cong \Oo_Q(-2+a+1, 0)\oplus \Oo_Q(0,-2)$. So the assertion follows in the case when either $a$ or $b$ is zero.

Now let us deal with the case when $a$ and $b$ are at least $1$. The logarithmic bundle $\Omega_Q^1 (\log \Dd)$ is an extension of $(\oplus \Oo_{A_i})\oplus (\oplus \Oo_{B_j})$ by $\Oo_Q(-2,0)\oplus \Oo_Q(0,-2)$. Note that we have
$$\Ext^1 (\oplus \Oo_{A_i}, \Oo_Q(0,-2))=\Ext^1 (\oplus \Oo_{B_j}, \Oo_Q(-2,0))=0.$$
Thus $\Omega_Q^1 (\log \Dd)$ corresponds to an element $\epsilon$;
$$\epsilon \in \Ext^1 (\oplus \Oo_{A_i}, \Oo_Q(-2,0))\oplus \Ext^1 (\oplus \Oo_{B_j}, \Oo_Q(0,-2)).$$
From the first argument above, we observe that the first factor of $\epsilon$ with $\Ext^1(\oplus \Oo_{A_i}, \Oo_Q(0,-2))=0$ generates $\Oo_Q(-2+a, 0)\oplus \Oo_Q(0,-2)$ and similarly the second factor generates $\Oo_Q(-2,0)\oplus \Oo_Q(0,-2+b)$.
Thus $\epsilon$ corresponds to the bundle $\Oo_Q(-2+a,0)\oplus \Oo_Q(0,-2+b)$.
\end{proof}

In particular, we obtain that $\Omega_Q^1 (\log \Dd)$ with $\Dd\in |\Oo_Q(a,b)|$ consisting of lines on $Q$, is an ACM bundle if and only if $1\leq a,b \leq 3$. In general we obtain the following:

\begin{corollary}
Let $\Dd$ be an arrangement of smooth curves on $Q$ with simple normal crossings. Then $\Omega_Q^1 (\log \Dd)$ is not an ACM bundle, except when $\Dd\in |\Oo_Q(a,b)|$ consists of lines on $Q$ with $1\leq a,b \leq 3$.
\end{corollary}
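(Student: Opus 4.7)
The plan is to split on whether $\Dd$ consists only of lines or contains at least one component of higher bidegree. In the line-only case, Proposition \ref{line} supplies
\begin{equation*}
\Omega_Q^1(\log \Dd) \cong \Oo_Q(-2+a,0) \oplus \Oo_Q(0,-2+b),
\end{equation*}
so the question reduces to the standard ACM criterion for line bundles on $Q=\PP^1\times\PP^1$: a direct K\"unneth computation shows that $\Oo_Q(s,t)$ is ACM if and only if $|s-t|\le 1$. Applied summand by summand this gives precisely $1\le a\le 3$ and $1\le b\le 3$; if $a$ or $b$ vanishes then one of the summands is $\Oo_Q(-2,0)$ or $\Oo_Q(0,-2)$ and the criterion fails, so the condition on both $a$ and $b$ is necessary.

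For the complementary case, assume some $D_j\in\Dd$ has bidegree $(a_j,b_j)$ with $a_j,b_j\geq 1$. The plan is to detect the failure of ACM already at the single twist by $\Oo_Q(-1,-1)$, using the residue sequence
\begin{equation*}
0\to \Omega_Q^1(-1,-1) \to \Omega_Q^1(\log \Dd)(-1,-1) \to \bigoplus_i \Oo_{D_i}(-1,-1) \to 0.
\end{equation*}
Since $\Omega_Q^1(-1,-1)=\Oo_Q(-3,-1)\oplus\Oo_Q(-1,-3)$, a short K\"unneth calculation yields $H^1(\Omega_Q^1(-1,-1))=H^2(\Omega_Q^1(-1,-1))=0$, so the long exact sequence collapses to an isomorphism
\begin{equation*}
H^1(\Omega_Q^1(\log \Dd)(-1,-1)) \;\cong\; \bigoplus_i H^1(D_i,\Oo_{D_i}(-1,-1)).
\end{equation*}

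It remains to compute each summand. For a line $D_i$ one has $\Oo_{D_i}(-1,-1)\cong\Oo_{\PP^1}(-1)$, so the contribution vanishes. For the non-line $D_j$, adjunction on $Q$ gives $\omega_{D_j}\cong\Oo_{D_j}(a_j-2,b_j-2)$, and Serre duality together with the restriction sequence
\begin{equation*}
0\to \Oo_Q(-1,-1)\to \Oo_Q(a_j-1,b_j-1)\to \Oo_{D_j}(a_j-1,b_j-1)\to 0
\end{equation*}
yields $h^1(\Oo_{D_j}(-1,-1))=a_j b_j\geq 1$. Hence $H^1(\Omega_Q^1(\log\Dd)(-1,-1))\neq 0$ and $\Omega_Q^1(\log\Dd)$ is not ACM. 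The main conceptual point is the identification of $(-1,-1)$ as the unique twist at which the contribution of $\Omega_Q^1$ drops out of the long exact sequence in both degrees $1$ and $2$, isolating the ACM obstruction on the divisor summands; after that, the adjunction and Serre duality calculation on each $D_i$ is routine.
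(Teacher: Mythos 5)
Your proof is correct, and in the non-line case it is organized a bit more efficiently than the paper's. The paper also works from the residue sequence $0\to \Oo_Q(-2,0)\oplus\Oo_Q(0,-2)\to \Omega_Q^1(\log \Dd)\to \oplus_i\Oo_{D_i}\to 0$, but in two steps: first the untwisted sequence forces $h^1(\Oo_{D_i})=0$ for every component, so each $D_i$ must be rational (a line or a curve of bidegree $(c,1)$ or $(1,c)$), and only then is the $(-1,-1)$ twist used, together with $h^2(\Oo_Q(-3,-1))=h^2(\Oo_Q(-1,-3))=0$, to exclude the residual $(c,1)$-curves via $h^1(\Oo_{D_i}(-1,-1))=h^0(\Oo_{\PP^1}(c-1))>0$. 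You bypass the genus step entirely: the single twist by $\Oo_Q(-1,-1)$, plus the vanishing of $H^1$ and $H^2$ of $\Omega_Q^1(-1,-1)=\Oo_Q(-3,-1)\oplus\Oo_Q(-1,-3)$, isolates $H^1(\Omega_Q^1(\log\Dd)(-1,-1))\cong\oplus_i H^1(\Oo_{D_i}(-1,-1))$, and your adjunction/Serre-duality computation $h^1(\Oo_{D_j}(-1,-1))=a_jb_j$ kills ACM-ness uniformly for every non-line component, whether rational or of positive genus; your line-bundle criterion $|s-t|\le 1$ in the line-only case reproduces the paper's remark following Proposition \ref{line}. The only point you should state explicitly is the (easy) fact underlying your dichotomy: a smooth irreducible curve on $Q$ of bidegree $(a,0)$ or $(0,b)$ is necessarily a line, since for $a\ge 2$ every member of $|\Oo_Q(a,0)|$ is a union of fibers; with that sentence added, your argument is complete and, if anything, slightly cleaner than the original.
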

\begin{proof}
If $\Dd:=\{D_1, \cdots, D_m\}$ consists of $m$ smooth curves, then it admits the sequence
\begin{equation}\label{seq4}
0\to \Oo_Q(-2,0)\oplus \Oo_Q(0,-2) \to \Omega_Q^1 (\log \Dd) \to \oplus_{i=1}^m \Oo_{D_i} \to 0.
\end{equation}
If $\Ee:=\Omega_Q^1(\log \Dd)$ is ACM, then equivalently we have $h^1(\Ee(t,t))=0$ for all $t\in \ZZ$. From the sequence (\ref{seq4}), we have $\sum h^1(\Oo_{D_i})=0$. It implies that each $D_i$ is a rational curve and so it is either a line in a ruling or a rational normal curve of bidegree $(c,1)$ (or $(1,c)$) with $c\geq 1$. We can exclude the latter case since we would have $h^1(\Oo_{D_i}(-1,-1))=h^0(\Oo_{\PP^1}(c-1))>0$, which is impossible from (\ref{seq4}) twisted by $\Oo_Q(-1,-1)$ and the fact that $h^2(\Oo_Q(-3,-1))=h^2(\Oo_Q(-1,-3))=0$.  Now the assertion follows from Proposition \ref{line}.
\end{proof}

\begin{proposition}\label{b2.00}
For $s\geq 5$, let $\Dd = D_1\cup \cdots \cup D_s$ be a normal crossings divisor with $\sum_{i=1}^4 D_i\in |\Oo_Q(2,2)|$. For a fixed normal crossings divisor $\Dd'$ with $\Omega _Q^1(\log \Dd )\cong \Omega _Q^1(\log \Dd ')$, let $E$ be any irreducible component of $\Dd'$ which is not a line. Then we have $E=D_i$ for some $5\leq i \leq s$.
\end{proposition}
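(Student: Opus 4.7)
The plan is to adapt the argument of Proposition \ref{b1}, with the role of an $r+2$ tame hyperplane configuration replaced by the single $(2,2)$-divisor $\Aa := D_1 + D_2 + D_3 + D_4$. A first observation is numerical: since each $D_i$ ($1\le i\le 4$) is an irreducible smooth divisor of some nonzero bidegree $(a_i,b_i)$ with $\sum a_i=\sum b_i=2$, and since $(k,0)$ or $(0,k)$ with $k\ge 2$ is reducible, each $D_i$ must be a line, with exactly two in each ruling of $Q\cong\PP^1\times\PP^1$. Proposition \ref{line} then applies to give $\Omega_Q^1(\log\Aa)\cong\Oo_Q^2$.

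Next I would dualize the residue sequence
$$0\to\Omega_Q^1(\log\Aa)\to\Omega_Q^1(\log\Dd)\to\bigoplus_{i=5}^s\epsilon_{i*}\Oo_{D_i}\to 0$$
to obtain
$$0\to\Omega_Q^1(\log\Dd)^\vee\to\Oo_Q^2\to\bigoplus_{i=5}^s N_{D_i/Q}\to 0.$$
Assume for contradiction that $E$ is a non-line component of $\Dd'$ with $E\ne D_i$ for all $5\le i\le s$; since the first four $D_i$ are lines, $E\ne D_i$ for every $i$. By Lemma \ref{f1} applied to $\Dd'$, together with the isomorphism $\Omega_Q^1(\log\Dd)\cong\Omega_Q^1(\log\Dd')$, we obtain $H^0(E,\Omega_Q^1(\log\Dd)^\vee_{\vert E})\ne 0$. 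Since $E\ne D_i$ for all $i$, the $\Tor_1$ vanishes and restricting the dual sequence to $E$ yields an injection $\Omega_Q^1(\log\Dd)^\vee_{\vert E}\hookrightarrow\Oo_E^2$, exhibiting the nonzero section as a nontrivial element of $\ker[\KK^2\to\bigoplus_{i=5}^sH^0(N_{D_i/Q}_{\vert E})]$.

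To turn this into a contradiction I would pass to the ambient $\PP^3$ by writing $\Aa=\Aa_1+\Aa_2$ with $\Aa_1:=D_1+D_3$ and $\Aa_2:=D_2+D_4$ both in $|\Oo_Q(1,1)|$; let $H_1,H_2\subset\PP^3$ be the planes cutting out $\Aa_1,\Aa_2$ and set $\Hh:=H_1\cup H_2$. Since $\Hh$ has at most nodal singularities in a neighborhood of $Q$ and $\Aa=\Hh\cap Q$ has simple normal crossings, Proposition 2.11 of \cite{D} gives
$$0\to\Omega_Q^1(\log\Aa)^\vee\to\Omega_{\PP^3}^1(\log\Hh)^\vee_{\vert Q}\to\Oo_Q(2)\to 0,$$
and Theorem \ref{DKthm} identifies $\Omega_{\PP^3}^1(\log\Hh)^\vee\cong\Oo_{\PP^3}\oplus\Oo_{\PP^3}(1)^{\oplus 2}$. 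Chaining, one obtains an inclusion $\Omega_Q^1(\log\Dd)^\vee_{\vert E}\hookrightarrow\Oo_E\oplus\Oo_E(1)^{\oplus 2}$, and the plan is to conclude via a Lemma \ref{a2}-type vanishing on a non-degenerate subcurve of $E\subset\PP^3$, which exists because a non-line irreducible curve on $Q$ is not contained in any line of $\PP^3$.

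The hardest part will be the last step: unlike in Proposition \ref{b1}, where the identification $\Omega_{\PP^r}^1(\log\Hh)\cong T\PP^r(-1)$ lets Lemma \ref{a2} kill every global section, the presence of the trivial summand $\Oo_{\PP^3}$ means that $\Omega_{\PP^3}^1(\log\Hh)^\vee_{\vert E}$ always has a nonzero section. The plan to handle this is to observe that the composite $\Oo_Q^2\hookrightarrow\Omega_{\PP^3}^1(\log\Hh)^\vee_{\vert Q}$ already accounts for the $\Oo$ factor via the two global sections of $\Oo_Q^2$; subtracting this off, any additional nonzero section of $\Omega_Q^1(\log\Dd)^\vee_{\vert E}$ must land in the image of $\Omega_{\PP^3}^1(1)_{\vert E}$, where Lemma \ref{a2} produces the contradiction. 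Making this projection argument rigorous (and carefully checking the case where $E$ spans only a $2$-plane in $\PP^3$, i.e. $E$ a smooth conic of bidegree $(1,1)$) is the chief technical obstacle.
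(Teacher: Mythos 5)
The first half of your plan is sound and coincides with how the paper begins: the four divisors $D_1,\dots,D_4$ are forced to be lines, two in each ruling, Proposition \ref{line} gives $\Omega_Q^1(\log \Aa)\cong \Oo_Q^{\oplus 2}$, and dualizing the residue sequence and restricting to $E$ converts the unstability of $E$ (Lemma \ref{f1}) into a nonzero constant pair $(\lambda,\mu)\in\KK^2$ killed by the maps to $H^0(\Oo_{D_i}(D_i)_{\vert D_i\cap E})$, $i\ge 5$. The second half, however, fails. The planes $H_1,H_2$ spanned by $D_1\cup D_3$ and $D_2\cup D_4$ are tangent planes of $Q$ (a reducible hyperplane section of $Q$ is cut out precisely by a tangent plane), so they are not transversal to $Q$ and Proposition 2.11 of \cite{D} does not apply; in fact the sequence you write down cannot exist: by Theorem \ref{DKthm} its middle term is $\Oo_Q\oplus\Oo_Q(1,1)^{\oplus 2}$, which has $c_2=2$, whereas any extension of $\Oo_Q(2,2)$ by $\Oo_Q^{\oplus 2}$ has $c_2=0$ (the discrepancy of length $2$ sits exactly at the two tangency points). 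Moreover, the difficulty you flag at the end is not a technicality to be engineered away but a genuine obstruction: since $\Omega_Q^1(\log \Aa)^\vee\cong\Oo_Q^{\oplus 2}$, the restriction $\Omega_Q^1(\log \Aa)^\vee_{\vert E}$ has nonzero sections for \emph{every} curve $E\subset Q$, so no argument using only the four lines (with or without the ambient $\PP^3$) can rule out any $E$; and the kernel element $(\lambda,\mu)$ only says that the corresponding global vector field on $Q$ tangent to the four lines is tangent to each $D_i$, $i\ge 5$, at the finitely many points of $D_i\cap E$, which yields nothing unless one has a degree inequality of the type $ad+bc>2ab$ as in Proposition \ref{b5} --- not available under the present hypotheses.

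The missing idea, which is the paper's actual route, is to enlarge the subarrangement by one \emph{non-line} component of $\Dd$: since $E$ is not a line, $\Dd$ cannot consist only of lines (by Proposition \ref{line} the bundle would then split, incompatible with $\Dd'$ containing $E$), so some $C\in\{D_5,\dots,D_s\}$ has bidegree $(a,b)$ with $a>0$, $b>0$. Setting $\Ff:=\Omega_Q^1(\log (D_1\cup\cdots\cup D_4\cup C))$, the residue sequence gives $0\to\Oo_Q^{\oplus 2}\to\Ff\to\Oo_C\to 0$, so $\Ff$ is globally generated outside $C$; restricting the inclusion $\Omega_Q^1(\log\Dd)^\vee\subset\Ff^\vee$ to $E$, the unstability of $E$ gives $h^0(\Ff^\vee_{\vert E})\ne 0$, and the contradiction is then drawn from the generic global generation of $\Ff_{\vert E}$ --- this fifth component is exactly what your configuration $\Aa$ lacks, and without it (or a numerical hypothesis as in Proposition \ref{b5}) the final contradiction cannot be produced. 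As it stands, then, your proposal has a genuine gap: the passage to $\PP^3$ is invalid, and the concluding step is left open and cannot be closed at the level of $\Aa$ alone.
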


\begin{proof}
Note that $\Dd$ cannot be a union of lines due to Proposition \ref{line} and the assumption that $E$ is not a line. So there is $C\in \{D_5,\dots ,D_s\}$, say $C\in |\Oo _Q(a,b)|$, with
$a>0$ and $b>0$. Set $\Hh := D_1\cup \cdots \cup D_4 \cup C$ and $\Ff := \Omega _Q^1(\log \Hh)$. By Proposition \ref{line} the vector bundle $\Ff$ fits into the exact sequence
\begin{equation}\label{eqbb1}
0 \to \Oo _Q^{\oplus 2} \to \Ff \to \Oo _C \to 0
\end{equation}
and in particular we have $h^0(\Ff )=3$. From (\ref{eqbb1}) we also get that $\Ff$ is globally generated outside $C$.

Assume $E\notin \{D_5,\dots ,D_s\}$. By Lemma \ref{f1} we have
$h^0(\Ff ^\vee _{\vert_E}) > 0$. Since $\Ff$ is globally generated outside $C$ and $C\cap E$ is finite, the sheaf $\Ff _{\vert E}$ is a locally free sheaf
whose global sections span it outside finitely many points. Hence $h^0(\Ff ^\vee _{\vert_E}) =0$, a contradiction.
\end{proof}

Let us recall that for a smooth curve $C\in |\Oo _Q(u,v)|$ the tangent bundle of $C$ has no non-trivial global section if and only if $C$ has genus $\ge 2$, i.e. $(u,v)\gneq(2,2)$.

\begin{proposition}\label{b5}
Let $\Dd$ and $\Dd '$ be two simple normal crossings divisors of $Q$ with $\Omega_Q^1(\log \Dd) \cong \Omega_Q^1 (\log \Dd ')$. Assume the existence of $C\in \Dd$, say $C\in |\Oo _Q(a,b)|$, and $E\in \Dd'$, say $E\in |\Oo _Q(c,d)|$ such that $(a,b)\gneq (2,2)$, $c>0$, $d>0$ and $ad+bc > 2ab$. Then we have $E\in \Dd$.
\end{proposition}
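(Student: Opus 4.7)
The plan is to argue by contradiction: assume $E\notin \Dd$. Since $E\in \Dd'$ and $\Omega_Q^1(\log \Dd)\cong \Omega_Q^1(\log \Dd')$, Lemma \ref{f1} gives $H^0(TQ(-\log \Dd)_{\vert E})\neq 0$. A first reduction identifies this with a constraint on the single-divisor bundle $TQ(-\log C)$: dualizing the inclusion $\Omega_Q^1(\log C)\hookrightarrow \Omega_Q^1(\log \Dd)$ (whose cokernel is the torsion sheaf $\bigoplus_{D\in \Dd\setminus\{C\}}\Oo_D$) yields
\[
0\to TQ(-\log \Dd)\to TQ(-\log C)\to \bigoplus_{D\in \Dd\setminus\{C\}}\Oo_D(D)\to 0.
\]
Restricting this to the smooth curve $E$ preserves injectivity, since any $\Tor_1$ contribution is torsion while $TQ(-\log \Dd)_{\vert E}$ is locally free. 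Hence $H^0(TQ(-\log C)_{\vert E})\neq 0$, and it suffices to contradict this.

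The hypothesis $(a,b)\gneq (2,2)$ forces $g(C)=(a-1)(b-1)\geq 2$, so $\Aut(C)$ is finite; as any element of $\Aut(Q,C)$ restricts to an automorphism of $C$ and this restriction is injective (because $C$ dominates each factor of $Q$), the group $\Aut(Q,C)$ is finite too, so $H^0(TQ(-\log C))=0$. The sequence
\[
0\to TQ(-\log C)(-E)\to TQ(-\log C)\to TQ(-\log C)_{\vert E}\to 0
\]
then gives an inclusion $H^0(TQ(-\log C)_{\vert E})\hookrightarrow H^1(TQ(-\log C)(-E))$, reducing the problem to showing $H^1(TQ(-\log C)(-E))=0$.

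To control this $H^1$, twist the dual residue sequence $0\to TQ(-\log C)\to TQ\to \Oo_C(C)\to 0$ by $\Oo_Q(-E)$ and read off
\[
H^0(\Oo_C(C-E))\to H^1(TQ(-\log C)(-E))\to H^1(TQ(-E))\to H^1(\Oo_C(C-E)).
\]
The hypothesis $ad+bc>2ab$ is precisely $\deg_C\Oo_C(C-E)=2ab-(ad+bc)<0$, so $H^0(\Oo_C(C-E))=0$ and $H^1(TQ(-\log C)(-E))$ embeds into $H^1(TQ(-E))$. By K\"unneth on $Q=\PP^1\times \PP^1$ we get $H^1(TQ(-E))=0$ as soon as $c,d\geq 3$, which settles the generic range at once.

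The hard part will be the boundary regime $c\leq 2$ or $d\leq 2$, where $H^1(TQ(-E))$ is typically nonzero and the connecting map $H^1(TQ(-E))\to H^1(\Oo_C(C-E))$ must be shown to be injective. Equivalently, by Serre duality on $Q$, the coboundary $H^0(\Oo_C(c-2,d-2))\to H^1(\Omega_Q^1(c-2,d-2))$ from the residue sequence twisted by $\Oo_Q(c-2,d-2)$ must be surjective. I expect this to require a careful case analysis exploiting the constraint $ad+bc>2ab$ (which, when one of $c,d$ is small, forces the other to be correspondingly large) together with explicit K\"unneth and Riemann--Roch computations on $Q$ and on $C$; the Chern-class identity $\sum_i [D_i]=\sum_j [E_j]$ coming from $\Omega_Q^1(\log \Dd)\cong \Omega_Q^1(\log \Dd')$ may also enter in ruling out recalcitrant configurations.
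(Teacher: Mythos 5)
Your opening moves are sound and agree with the paper's: the reduction from $TQ(-\log \Dd)_{\vert E}$ to a nonzero section of $TQ(-\log C)_{\vert E}$ is exactly how the paper starts, the vanishing $h^0(TQ(-\log C))=0$ is the same input (the paper gets it from $h^0(TC)=0$ for genus $\ge 2$; your finite-stabilizer argument is an acceptable substitute in characteristic $0$), and the degree count $\deg\Oo_C(C-E)=2ab-(ad+bc)<0$ is precisely how the hypothesis $ad+bc>2ab$ enters there too. The problem is that your proof is not finished, and the part you defer is not a marginal case: the hypotheses $a,b\ge 2$, $(a,b)\ne(2,2)$, $c,d>0$, $ad+bc>2ab$ do allow $\min(c,d)\le 2$, e.g. $(a,b)=(2,3)$ and $(c,d)=(4,1)$ give $ad+bc=14>12=2ab$, and there $TQ(-E)=\Oo_Q(-2,-1)\oplus\Oo_Q(-4,1)$ has $H^1\ne 0$, so your chain $H^0(TQ(-\log C)_{\vert E})\into H^1(TQ(-\log C)(-E))\into H^1(TQ(-E))$ concludes nothing. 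You propose to handle this regime by proving injectivity of $H^1(TQ(-E))\to H^1(\Oo_C(C-E))$, but you do not carry it out; moreover this is only a sufficient condition for what you actually need (namely $H^0(TQ(-\log C)_{\vert E})=0$), since your first coboundary embedding already discards information, and it is not evident that the injectivity holds in every admissible boundary configuration. As written, the argument covers only the range where $H^1(TQ(-E))=0$ (in particular $c,d\ge 3$), so there is a genuine gap.

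For comparison, the paper closes the argument by a lifting rather than an $H^1$-vanishing: it takes the nonzero $u\in H^0(TQ(-\log C)_{\vert E})\subset H^0(TQ_{\vert E})$, lifts it to $u'\in H^0(Q,TQ)$ (asserting surjectivity of the restriction $H^0(TQ)\to H^0(TQ_{\vert E})$ via a K\"unneth vanishing), pushes $u'$ to $w\in H^0(\Oo_C(a,b))$ by the residue sequence, and notes that $w$ vanishes on the length-$(ad+bc)$ scheme $C\cap E$ with $ad+bc>2ab=\deg\Oo_C(a,b)$, so $w=0$ and $u'$ is a nonzero section of $TQ(-\log C)$, a contradiction. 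The surjectivity invoked there is the same cohomological pressure point you ran into, so your caution does locate the delicate step of this proposition; but flagging it and sketching a hoped-for case analysis is not the same as resolving it, and until the regime $\min(c,d)\le 2$ is actually treated your proposal does not prove the statement.
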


\begin{proof}
Assume $E\notin \Dd$. Look at the exact sequence
\begin{equation}\label{eqb5.0}
0 \to TQ(-\log C) \to \Oo _Q(2,0)\oplus \Oo _Q(0,2) \to \Oo _C(a,b)\to 0
\end{equation}
Since $E\cap C$ is finite, we have $\mathrm{Tor} ^1_{\Oo _Q}(\Oo _C(a,b),\Oo _E) =0$ and so we get an exact sequence on $E$:
$$0 \to TQ(-\log C)_{\vert_E} \to \Oo _E(2,0)\oplus \Oo _E(0,2) \to \Oo _{C\cap E}(a,b)\to 0$$
which induces in cohomology a map
$$f: H^0(E,\Oo _E(2,0)\oplus \Oo _E(0,2)) \to H^0(E\cap C,\Oo _{C\cap E}(a,b)).$$
As in the proof of Proposition \ref{b1} we get $H^0(E,TQ(-\log C)_{\vert_E}) \ne 0$ and so we have $\ker (f) \ne 0$. Take $u\in \ker (f)$.  Since $H^0(TQ(-\log C)) =0$, the restriction map
$\rho : H^0(Q,TQ) \to H^0(C,TQ_{\vert_C})$ is injective. Since $H^0(C,TC)=0$, the normal bundle sequence of $C\subset Q$ gives
that the map $\rho ':  H^0(C,TQ_{\vert_C}) \to H^0(C,\Oo _C(a,b))$ is injective.

On the other hand we have $H^1(Q,TQ(-\log E)) =0$ by K\"{u}nneth formula and so the restriction
map $H^0(Q,TQ) \to H^0(Q,TQ_{\vert_E})$ is surjective. Thus there is $u'\in H^0(Q,TQ)$ such that $u'_{\vert_E} = u$. Let $w \in H^0(C,\Oo _C(a,b))$ be the image of $u'$ by
the map induced by (\ref{eqb5.0}). Since the image of $u$ vanishes on the set $C\cap E$, so $w$ vanishes on $C\cap E$ with degree $ad+bc$. Now by the assumption that $ad+bc > 2ab
= \deg (\Oo _C(a,b))$, we have $w=0$. Hence $u'$ comes from a non-zero section of $TQ(-\log C)$. Since $a\ge 2$ and $b\ge 2$ we have $h^0(TQ(-\log C))=0$, a contradiction.
\end{proof}

Now let us deal with the case when $\Dd$ consists of $m$ hyperplane sections on $Q$. By Corollary \ref{f3} and Corollary \ref{cor5}, we know that the map $\Phi_{(1,1)}^m$ is generically injective for $m\geq 6$ and $m=1$. So let us assume that $2\leq m \leq 5$.

As a special case of Proposition 2.11 in \cite{D}, we have the following:
\begin{lemma}\label{prop2}
Let $\Dd$ be an arrangement of hyperplane sections in $Q$ and $\Hh_{\Dd}$ be its corresponding hyperplane arrangement in $\PP^{n+1}$. Then we have
\begin{equation}\label{eqa1}
0\to \Oo_Q(-2) \to \Omega_{\PP^{n+1}}^1 (\log \Hh_{\Dd} )_{\vert _Q}\to \Omega_Q^1 (\log \Dd) \to 0.
\end{equation}
\end{lemma}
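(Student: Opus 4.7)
The plan is to derive the sequence from a snake-lemma comparison of the two residue sequences, one on $\PP^{n+1}$ and one on $Q$.

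First I would write down the residue sequences and the conormal sequence. Since $\Hh_{\Dd}$ has simple normal crossings in a neighborhood of $Q$ (each $H_i$ is transversal to $Q$ because $D_i$ is smooth), we have on $\PP^{n+1}$:
\begin{equation*}
0 \to \Omega^1_{\PP^{n+1}} \to \Omega^1_{\PP^{n+1}}(\log \Hh_{\Dd}) \to \bigoplus_i \Oo_{H_i} \to 0,
\end{equation*}
and on $Q$:
\begin{equation*}
0 \to \Omega^1_Q \to \Omega^1_Q(\log \Dd) \to \bigoplus_i \Oo_{D_i} \to 0.
\end{equation*}
Restricting the first to $Q$ (which preserves exactness since $\mathrm{Tor}_1(\Oo_{H_i},\Oo_Q)=0$ by transversality $H_i\pitchfork Q$), and using the conormal sequence $0\to\Oo_Q(-2)\to\Omega^1_{\PP^{n+1}}|_Q\to\Omega^1_Q\to 0$, I obtain a commutative diagram
\begin{equation*}
\begin{CD}
0 @>>> \Omega^1_{\PP^{n+1}}|_Q @>>> \Omega^1_{\PP^{n+1}}(\log \Hh_{\Dd})|_Q @>>> \bigoplus_i \Oo_{H_i}|_Q @>>> 0 \\
 @.    @VV\alpha V        @VV\beta V                    @VV\gamma V \\
0 @>>> \Omega^1_Q        @>>> \Omega^1_Q(\log \Dd)                     @>>> \bigoplus_i \Oo_{D_i}   @>>> 0
\end{CD}
\end{equation*}
whose vertical maps exist because the pullback of a form with logarithmic poles along $\Hh_{\Dd}$ has logarithmic poles along $\Dd = \Hh_{\Dd}\cap Q$, by the transversality; this is precisely the content of Proposition 2.11 in \cite{D}.

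Next I would identify the vertical kernels and cokernels. The left map $\alpha$ is surjective with $\ker\alpha = \Oo_Q(-2)$ by the conormal sequence. The right map $\gamma$ is an isomorphism: for each $i$ the transversality $H_i\cap Q=D_i$ gives $\Oo_{H_i}|_Q = \Oo_{H_i\cap Q}=\Oo_{D_i}$, and the map is the canonical identification. The snake lemma then yields
\begin{equation*}
0 \to \Oo_Q(-2) \to \ker\beta \to 0 \to 0 \to \op{coker}\beta \to 0,
\end{equation*}
so $\beta$ is surjective with kernel $\Oo_Q(-2)$, which is the claimed sequence (\ref{eqa1}).

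The only genuinely delicate point is the existence and compatibility of the vertical restriction map $\beta$; away from $\Hh_{\Dd}$ it is just the ordinary restriction of $1$-forms, but near a component $H_i$ one must check that the local logarithmic generators $\mathrm{d}f_i/f_i$ restrict to the logarithmic generators on $D_i$, which uses that $f_i$ remains a local equation of $D_i$ on $Q$ thanks to transversality. Once this is granted the rest is formal from the snake lemma, so I would simply invoke Proposition 2.11 of \cite{D} for the construction of $\beta$ and carry out the snake-lemma step explicitly as above.
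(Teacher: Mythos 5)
Your argument is correct, but it takes a different route from the paper: the paper offers no independent proof at all, stating the sequence as a special case of Proposition 2.11 in \cite{D} (the conormal sequence for logarithmic sheaves under transversal restriction), with the only input specific to the quadric being $\Nn^\vee_{Q/\PP^{n+1}}\cong\Oo_Q(-2)$. You instead rederive that special case by a snake-lemma comparison of the two residue sequences with the conormal sequence of $Q$, invoking \cite{D} only for the existence of the restriction map $\beta$ on logarithmic forms (which, as you note, can also be checked by the local computation with $\mathrm{d}f_i/f_i$). This buys a self-contained and purely local-near-$Q$ argument, which incidentally explains why the paper can later use the sequence even for non-tame configurations, where $\Omega^1_{\PP^{n+1}}(\log\Hh_{\Dd})$ fails to be locally free away from $Q$; the citation route is shorter and covers Dolgachev's more general transversality setting in one stroke. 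One small imprecision to fix: simple normal crossings of $\Hh_{\Dd}$ near $Q$ does not follow merely from each $H_i$ being transversal to $Q$; you need that $\Dd$ itself has simple normal crossings (which is the standing hypothesis under which both residue sequences you use are valid). At a point $p\in Q$ the differentials of the local equations of the $D_i$ through $p$ are independent in $T_pQ$, hence the corresponding linear forms are independent in $T_p\PP^{n+1}$, so $\Hh_{\Dd}$ has normal crossings at every point of $Q$; since the non-normal-crossing locus of a hyperplane arrangement is closed, this gives normal crossings on a neighborhood of $Q$, which is all your restriction step requires. With that justification supplied, the snake-lemma step and the Tor-vanishing from $Q\not\subset H_i$ are exactly as you state.
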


 By Theorem \ref{DKthm} and the sequence (\ref{eqa1}), we have the following exact sequences with $\Ee:=\Omega_Q^1 (\log \Dd)$ if $\Dd$ is tame:
\begin{align*}
(m=2)&:0\to \Oo_Q(-2) \to \Oo_Q(-1)^{\oplus 2} \oplus \Oo_Q \to \Ee\to 0\\
(m=3)&: 0\to \Oo_Q(-2) \to \Oo_Q(-1) \oplus \Oo_Q^{\oplus 2} \to \Ee \to 0\\
(m=4)&: 0\to \Oo_Q(-2) \to \Oo_Q^{\oplus 3} \to \Ee \to 0\\
(m=5)&:0\to \Oo_Q(-2) \to T\PP^3(-1)_{\vert _Q} \to \Ee \to 0
\end{align*}

When $\Dd$ is not tame, we have the same exact sequences for $m=2,3,5$, while $\Omega_Q^1 (\log \Dd)$ is not globally generated if $m=4$ and so does not admit the sequence above (see Proposition \ref{ut1}).

It is known in \cite{DK} that the logarithmic bundle $\Omega_{\PP^n}^1 (\log \Hh)$ of hyperplanes $\Hh=\{H_1, \cdots, H_m\}$ with $m\geq n+2$, admits the Steiner resolution
$$0\to \Oo_{\PP^n}(-1)^{\oplus (m-n-1)} \to \Oo_{\PP^n}^{\oplus (m-1)} \to \Omega_{\PP^n}^1 (\log \Hh) \to 0, $$
whose restriction to $Q$ with the sequence (\ref{eqa1}) enables us to have
\begin{equation}
0\to \Oo_Q(-2)\oplus \Oo_Q(-1)^{\oplus (m-4)} \to \Oo_Q^{\oplus (m-1)} \to \Omega_Q^1(\log \Dd) \to 0
\end{equation}
with $n=3$.

Let $S=\CC[x_0, \cdots, x_3]$ be the coordinate ring of $\PP^3$  and let us consider the Euler sequence over $\PP^3$:
$$0\to \Oo_{\PP^3} \to \Oo_{\PP^3}(1)^{\oplus 4} \to T\PP^3 \to 0,$$
where the first map is defined by $1\mapsto \sum x_i \frac{\partial}{\partial x_i}$. In the exact sequence for the normal bundle
$$0\to TQ \to T\PP^3_{\vert_Q} \to \Nn_{Q|\PP^3} \to 0,$$
the second map $T\PP^3_{\vert_Q}\to \Nn_{Q|\PP^3}\cong \Oo_Q(2)$ is defined from the map $\Oo_{\PP^3}(1)^{\oplus 4}_{\vert_Q} \to \Oo_Q(2)$ sending $(a_0, \cdots, a_3)$ to $\sum a_i \frac{\partial F}{\partial x_i}$, where $F$ is the defining equation of $Q$.
Note that $\sum x_i \frac{\partial F}{\partial x_i}=2F(x_0, \cdots, x_3)$ and so it vanishes over $Q$.

\begin{proposition}\label{2conics}
Let $\Dd=\{D_1, D_2\}$ be an arrangement of two smooth conics on $Q$.
\begin{enumerate}
\item The map $\Phi_{(1,1)}^2$ is not generically injective.
\item The zeros of the unique section in $H^0(\Omega_Q^1 (\log \Dd))$ are the singular points of the two singular conics in the pencil spanned by $D_1$ and $D_2$.
\end{enumerate}
\end{proposition}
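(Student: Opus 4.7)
The plan is to prove (ii) first, since (i) will follow from it. Applying Lemma \ref{prop2} together with Theorem \ref{DKthm} for two hyperplanes in $\PP^3$ gives the short exact sequence
\[
0 \to \Oo_Q(-2) \to \Oo_Q \oplus \Oo_Q(-1)^{\oplus 2} \to \Ee \to 0,
\]
where $\Ee := \Omega_Q^1(\log \Dd)$; one reads off $c_1(\Ee) = 0$ and $c_2(\Ee) = 2$. A K\"unneth calculation on $Q = \PP^1 \times \PP^1$ immediately gives $h^0(\Ee) = 1$. A concrete generator of $H^0(\Ee)$ is the logarithmic differential
\[
\omega := d\log(s_1/s_2) = \tfrac{ds_1}{s_1} - \tfrac{ds_2}{s_2},
\]
where $s_i \in H^0(\Oo_Q(1,1))$ cuts out $D_i$, and $f := s_1/s_2$ realises the pencil as a rational map $Q \to \PP^1$.

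For part (ii), I would observe that, away from $\Dd$, the form $\omega$ coincides with $df/f$ and hence vanishes exactly at the critical points of $f$. These critical points are by definition the singular points of the singular fibres of $f$, i.e.\ the singular points of the singular members of the pencil $\mathcal{P} := \langle D_1, D_2 \rangle$. Now the discriminant $\{\det = 0\} \subset |\Oo_Q(1,1)| = \PP^3$ is a quadric hypersurface, so a generic pencil line meets it in exactly two points, producing exactly two singular conics and their two singular points. Since $\omega$ has residues $\pm 1$ along $D_1$ and $D_2$, no zero lies on $\Dd$, and these two critical points exhaust the $c_2(\Ee) = 2$ zeros of the section, proving (ii).

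For part (i), the consequence of (ii) is that the zero scheme $Z = \{q_1, q_2\}$ is an invariant of the pencil $\mathcal{P}$, not of the particular pair $\{D_1, D_2\}$. The section exhibits $\Ee$ as an extension
\[
0 \to \Oo_Q \to \Ee \to \Ii_Z \to 0,
\]
and a local-to-global argument (using $\mathcal{E}xt^1(\Ii_Z, \Oo_Q) \cong \Oo_Z$ together with $h^1(\Oo_Q) = 0$) yields $\Ext^1(\Ii_Z, \Oo_Q) \cong \KK^2$, so isomorphism classes of such extensions are parametrised by $\PP\Ext^1(\Ii_Z, \Oo_Q) \cong \PP^1$. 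The restriction of $\Phi_{(1,1)}^2$ to pairs in a fixed pencil therefore factors through a map $\mathrm{Sym}^2(\mathcal{P}) \cong \PP^2 \to \PP^1$, whose fibres have dimension at least $1$ on dimension grounds. In particular a generic pair $(D_1, D_2)$ lies in a $\geq 1$-parameter family of pairs with isomorphic logarithmic bundle, so $\Phi_{(1,1)}^2$ is not generically injective. The main technical obstacle is a clean identification of this $\PP^2 \to \PP^1$ map: a Hessian computation of $\log f$ at $q_1, q_2$ should show it is $\{a,b\} \mapsto \chi(a)\chi(b)$ for a Moebius coordinate $\chi$ on $\mathcal{P}$ sending the two singular conics to $0$ and $\infty$, displaying explicit level curves $\{(D_a, D_{c/a}) : a \in \mathbb{G}_m\}$ of pairs with the same bundle.
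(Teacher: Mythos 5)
Your argument is correct, and it takes a genuinely different route from the paper's on both parts. For part (2), the paper never writes down the section explicitly as a logarithmic form: it chooses coordinates with $H_i=\{x_i=0\}$, computes $T\PP^3(-\log \Hh_{\Dd})\cong \Oo_{\PP^3}(1)^{\oplus 2}\oplus\Oo_{\PP^3}$ via the Euler sequence, dualizes to identify the map $\Oo_Q(-2)\to\Oo_Q(-1)^{\oplus 2}\oplus\Oo_Q$ with $1\mapsto(\partial F/\partial x_0,\partial F/\partial x_3,x_1\partial F/\partial x_1)$, and reads off $Z=\{\partial F/\partial x_0=\partial F/\partial x_3=0\}\cap Q$, which is then matched with the tangency condition for members of the pencil of hyperplanes. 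Your identification of the generator of $H^0(\Ee)$ as $d\log(s_1/s_2)$ and of its zeros as the critical points of the pencil map is a cleaner, coordinate-free version of the same fact; the residue argument correctly rules out zeros on $\Dd$ (note that under the normal crossings hypothesis the singular points of the singular members indeed lie off $\Dd$, so there is no clash), and the count of two singular members via the dual quadric matches $c_2(\Ee)=2$. For part (1), the paper does not use part (2) at all: it makes a global dimension count, comparing the $5$-dimensional family of extensions $0\to\Oo_Q\to\Ee\to\Ii_Z\to 0$ (four parameters for $Z$, one for $\PP\Ext^1(\Ii_Z,\Oo_Q)\cong\PP^1$) with the $6$-dimensional family of pairs of conics. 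Your version fixes the pencil, uses (2) to pin down $Z$, and compares $\dim\mathrm{Sym}^2(\mathcal{P})=2$ with $\dim\PP\Ext^1(\Ii_Z,\Oo_Q)=1$; this buys slightly more, namely that the failure of injectivity already occurs among pairs inside a single pencil, whereas the paper's count only shows failure somewhere. Two small points of care, at the same level of rigor as the paper's own argument: you implicitly use that every normal-crossings pair of smooth members of the pencil also has $h^0=1$ with finite zero scheme (same Künneth computation, so fine), and the ``dimension grounds'' step needs the pair-to-extension-class map to be algebraic/constructible, which is standard and no worse than the paper's appeal to a ``$5$-dimensional family of extensions''. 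The concluding Hessian identification of the explicit $\PP^2\to\PP^1$ fibration is not needed for the statement, so leaving it as a sketch is harmless.
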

\begin{proof}
Let $\Hh=\Hh_{\Dd}=\{H_1, H_2\}$ be the corresponding hyperplane arrangement on $\PP^3$ and then we have
$\Omega_{\PP^3}^1(\log \Hh) \cong \Oo_{\PP^3}(-1)^{\oplus 2} \oplus \Oo_{\PP^3}$. Thus we have
\begin{equation}\label{seq3}
0 \to \Oo_Q(-2) \to \Oo_Q(-1)^{\oplus 2} \oplus \Oo_Q \to \Ee \to 0
\end{equation}
where $\Ee:=\Omega_Q^1 (\log \Dd)$. In particular we have $h^0(\Ee)=1$ and $h^0(\Ee(-i,-j))=0$ for $(i,j)\in \{(1,0), (0,1), (1,1)\}$. Thus we have
\begin{equation}\label{seq2}
0\to \Oo_Q \to \Ee \to \Ii_Z \to 0
\end{equation}
where $Z$ is a $0$-dimensional subscheme of $Q$ with length $2$. Since the dimension of $\Ext^1 (\Ii_Z, \Oo_Q)$ is $h^1(\Ii_Z(-2,-2))=2$, so there is a $5$-dimensional family of extension of type (\ref{seq2}). So the first assertion follows since the dimension of the family of two conics on $Q$ is $6$.

For the second assertion, let us assume that $x_i=0$ is the defining equation of $H_i$ for $i=1,2$. Then $T\PP^3( -\log \Hh_{\Dd})\subset T\PP^3$ consists of the vectors with the form $\{ a_0\frac{\partial}{\partial x_0}+ a_1x_1\frac{\partial}{\partial x_1}+ a_2x_2\frac{\partial}{\partial x_2}+ a_3\frac{\partial}{\partial x_3}\}$, i.e. it is the sheaf of holomorphic vector fields that are tangent to each $H_i$.
Since $\sum x_i \frac{\partial }{\partial x_i}=0$ in $T\PP^3$, so each element of $T\PP^3(-\log \Hh_{\Dd})$ can be expressed as
$$(a_0-a_2x_0)\frac{\partial}{\partial x_0}+ (a_1-a_2)x_1\frac{\partial}{\partial x_1}+ (a_3-a_2x_3)\frac{\partial}{\partial x_3}.$$
This gives us an isomorphism $T\PP^3(-\log \Hh_{\Dd}) \cong \Oo_{\PP^3}(1)^{\oplus 2}\oplus \Oo_{\PP^3}$. So the map $T\PP^3(-\log \Hh_{\Dd}) \to \Nn_{Q|\PP^3}$ is simply given by:
$$\Oo_Q(1)^{\oplus 2}\oplus \Oo_Q \to \Oo_Q(2)$$
$$(a,b,c) \to a \frac{\partial F}{\partial x_0}+b\frac{\partial F}{\partial x_3}
+cx_1\frac{\partial F}{\partial x_1}.$$
Dually the map $\pi : \Oo_Q(-2) \to \Oo_Q(-1)^{\oplus 2}\oplus \Oo_Q$ in (\ref{seq3}) is defined by sending $1$ to $(\frac{\partial F}{\partial x_0}, \frac{\partial F}{\partial x_3}, x_1\frac{\partial F}{\partial x_1})$. From the diagram below
$$\begin{array}{ccccccc}
& & &0& &0 & \\
& & &\downarrow & &\downarrow &\\
&          & &\Oo_Q   &  =  &\Oo_Q & \\
&& &\downarrow & &\downarrow & \\
0 \to &\Oo_Q(-2) &\stackrel{\pi}{\to}& \Oo_Q(-1)^{\oplus 2} \oplus \Oo_Q &\rightarrow &\Omega_Q^1 (\log \Dd)& \to 0\\
&\|& &\downarrow & &\downarrow &\\
0 \to &\Oo_Q(-2)& \to& \Oo_Q(-1)^{\oplus 2}&\rightarrow & \Ii_Z &  \to 0\\
&& &\downarrow & & \downarrow& \\

& &          &0& & 0,& \\
\end{array}$$
the $0$-dimensional subscheme $Z$ is the common zeros $\frac{\partial F}{\partial x_0}=\frac{\partial F}{\partial x_3}=0$ on $Q$. Note that the tangent plane at point $a=(a_0, \cdots, a_3)\in Q$ is given by $\sum \frac{\partial F}{\partial x_i}(a_0, \cdots, a_3)x_i=0$. For a conic in the pencil $\{ c_1x_1+c_2x_2\}$ to be a singular conic, i.e. its corresponding hyperplane is tangent to $Q$, we should have $\frac{\partial F}{\partial x_i}(a_0, \cdots, a_3)= 0$, $i=0,3$ for some $(a_0, \cdots, a_3)\in Q$ and so the second assertion follows.
\end{proof}

%%%%%%%%%%%%%%%%%%%%%%%%%%%%

\section{multiprojective spaces}

Let $X=\PP^{n_1} \times \cdots \times \PP^{n_s}$ with $n_i>0$ be the product of $s$ projective spaces. Take multi-homogeneous coordinates $x_{10}, \cdots, x_{1n_1},x_{20},\ldots, x_{sn_s}$ and write $\Oo_X(a_1, \ldots, a_s)$ for the line bundle of multi-degree $(a_1, \ldots, a_s)$. Then we have
$$H^0(\Oo_X(a_1, \ldots, a_s))=\KK[x_{10},\ldots,  x_{sn_s}]_{a_1, \ldots ,a_s}$$
the vector space of all multi-homogeneous polynomials of multi-degree $\mathbf{a}=(a_1, \ldots, a_s)$. Simply denote it by $\KK[\mathbf{x}_1, \ldots, \mathbf{x}_s]_{\mathbf{a}}$ without confusion.

 Fixing $f\in \KK[\mathbf{x}_1, \ldots, \mathbf{x}_s]_{\mathbf{a}}$, let us define $J_i(f)\subseteq \KK[\mathbf{x}_1, \ldots, \mathbf{x}_s]_{\mathbf{a}-\mathbf{i}}$ to be the linear span of the polynomials $\frac{\partial}{\partial x_{ij}}(f)$, $0\le j \le n_i$. Here $\mathbf{i}=(0,\ldots, 0,1,0,\ldots, 0)$ is the unit vector with $1$ in $i$-th position.
We call $J_i(f)$ the {\it $i$-th Jacobian space} of $f$ or the type $\mathbf{a}-\mathbf{i}$ part of the {\it $i$-th Jacobian ideal}
of $f$.

Choose $f\in \KK[\mathbf{x}_1, \ldots, \mathbf{x}_s]_{\mathbf{a}}$ with $a_i>0$ and let us define a divisor $D= V(f)$. Assume that $D$ is smooth. We start with the exact sequence
\begin{equation}\label{equ1}
0 \to TX(-\log D) \to TX \to \Oo _D(\mathbf{a})\to 0.
\end{equation}
Let $E\subset X$ be any divisor of type $\mathbf{a}-\mathbf{i}$. Since $D$ is smooth and $E$ has type $\mathbf{a}-\mathbf{i}$, the scheme $E\cap D$ has codimension two in $X$. Thus
the equations of $D$ and $E$ form an exact sequence, so by tensoring with $\Oo _E$ the twist of (\ref{equ1}) by $\Oo_X(-\mathbf{i})$ we get the following exact sequence
\begin{equation}\label{equ2}
0 \to TX(-\log D)(-\mathbf{i})_{\vert_E} \to TX(-\mathbf{i})_{\vert_E} \to \Oo _{D\cap E}(\mathbf{a}-\mathbf{i})\to 0.
\end{equation}

\begin{proposition}\label{u1}
Assume that $h^1(TX(-\mathbf{a}))=0$. For two smooth divisors $D_1=V(f_1)$ and $D_2=V(f_2)$ with $f_i \in |\Oo_X(\mathbf{a})|$, $\mathbf{a}\gneq (1,\ldots, 1)$, we have
$$TX(-\log D_1) \not\cong TX(-\log D_2)$$
if  $J_i(f_1) \ne J_i(f_2)$ for some $1\leq i \leq s$.
\end{proposition}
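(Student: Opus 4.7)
The plan is to realize the $i$-th Jacobian space $J_i(f)$ as the kernel of a canonical cohomological map intrinsic to the bundle $\Ff:=TX(-\log D)$. Once this is achieved, any abstract isomorphism $TX(-\log D_1)\cong TX(-\log D_2)$ will force $J_i(f_1)=J_i(f_2)$, and the contrapositive is the claim.

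First I would collect the cohomology that is needed. Using K\"{u}nneth together with the Euler sequence on each factor $\PP^{n_i}$, and the standing hypothesis $\mathbf{a}\gneq(1,\dots,1)$ with $s\ge 2$, one obtains $h^0(\Oo_X(-\mathbf{i}))=h^1(\Oo_X(-\mathbf{i}))=0$, $h^0(TX(-\mathbf{a}))=0$, $h^1(TX(-\mathbf{i}))=0$, and $H^0(TX(-\mathbf{i}))=\KK\langle \partial/\partial x_{i0},\dots,\partial/\partial x_{in_i}\rangle$; a similar K\"{u}nneth argument gives $h^1(\Oo_X(-\mathbf{a}))=0$ so that $D$ is connected. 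Combining these with the hypothesis $h^1(TX(-\mathbf{a}))=0$, the long exact sequences of (\ref{equ1}) twisted by $\Oo_X(-\mathbf{a})$ and by $\Oo_X(-\mathbf{i})$ give $H^1(\Ff(-\mathbf{a}))\cong \KK$ and, after identifying $H^0(\Oo_D(\mathbf{a}-\mathbf{i}))\cong H^0(\Oo_X(\mathbf{a}-\mathbf{i}))$ via restriction, a short exact sequence
\[
0\to J_i(f)\to H^0(\Oo_X(\mathbf{a}-\mathbf{i}))\stackrel{\psi}{\longrightarrow}H^1(\Ff(-\mathbf{i}))\to 0,
\]
where the identification of the image of $H^0(TX(-\mathbf{i}))$ with $J_i(f)$ comes from $\partial/\partial x_{ij}\mapsto \partial f/\partial x_{ij}$.

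The central technical step, and the one I expect to be the main obstacle, is to show that $\psi$ agrees, up to a nonzero scalar, with the cup-product pairing
\[
H^0(\Oo_X(\mathbf{a}-\mathbf{i}))\otimes H^1(\Ff(-\mathbf{a}))\stackrel{\cup}{\longrightarrow}H^1(\Ff(-\mathbf{i}))
\]
evaluated against any generator of the one-dimensional $H^1(\Ff(-\mathbf{a}))$. This is a naturality assertion: the sequence $0\to \Ff(-\mathbf{i})\to TX(-\mathbf{i})\to \Oo_D(\mathbf{a}-\mathbf{i})\to 0$ is the tensor product of $0\to \Ff(-\mathbf{a})\to TX(-\mathbf{a})\to \Oo_D\to 0$ with $\Oo_X(\mathbf{a}-\mathbf{i})$, so compatibility of connecting maps with twisting by a line bundle yields $\psi(g)=g\cup \partial_1(1)$ for $g\in H^0(\Oo_X(\mathbf{a}-\mathbf{i}))$, where $\partial_1(1)\in H^1(\Ff(-\mathbf{a}))$ is the image of $1\in H^0(\Oo_D)$. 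One has to carry this verification out carefully so that the resulting map depends only on the abstract bundle $\Ff$ (up to the scalar indeterminacy in choosing a generator of the one-dimensional space $H^1(\Ff(-\mathbf{a}))$), rather than on the particular inclusion $\Ff\hookrightarrow TX$ used to build it.

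With that in place the conclusion is immediate. An isomorphism $\Ff:=TX(-\log D_1)\cong TX(-\log D_2)$ produces two elements $c_k\in H^1(\Ff(-\mathbf{a}))$, $k=1,2$, each a generator of the one-dimensional line, hence proportional. The corresponding maps $\psi_k=\cup\, c_k$ therefore share the same kernel, so $J_i(f_1)=\ker\psi_1=\ker\psi_2=J_i(f_2)$ for every $i$, contradicting the hypothesis $J_i(f_1)\neq J_i(f_2)$.
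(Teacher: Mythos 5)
Your proof is correct, and it is a genuinely different packaging of the argument from the paper's. The paper fixes the twist by $-\mathbf{i}$ and varies the divisor $E\in|\Oo_X(\mathbf{a}-\mathbf{i})|$: using the hypothesis $h^1(TX(-\mathbf{a}))=0$ it observes that $h^0(TX(-\mathbf{i})_{\vert_E})$ is independent of $E$, calls $E$ \emph{jumping} when the rank of $\phi_E$ in (\ref{equ2}) drops, i.e.\ when $h^0(TX(-\log D)(-\mathbf{i})_{\vert_E})$ exceeds its generic value (an invariant of the abstract bundle), and asserts that the jumping $E$ are exactly those whose equations lie in $J_i(f)$; so $J_i(f)$ is detected as a jumping locus inside the linear system. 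You instead recover $J_i(f)$ in one stroke as the kernel of cup product with the canonical generator of the one-dimensional space $H^1(TX(-\log D)(-\mathbf{a}))$, namely the image of $1\in H^0(\Oo_D)$ under the connecting map. The two mechanisms are close cousins: for $E=V(e)$, the exact sequence obtained from multiplication by $e$ on $TX(-\log D)(-\mathbf{a})$ shows that the jump of $h^0(TX(-\log D)(-\mathbf{i})_{\vert_E})$ occurs precisely when $e$ cups to zero against that same generator. What your version buys is that the bundle-intrinsic nature of the recovered subspace is completely explicit (any nonzero class in the one-dimensional $H^1$ has the same cup-product kernel, so the dependence on the particular inclusion into $TX$ disappears), and a single linear map replaces a rank-jump statement over a whole family of divisors; the cost is the additional vanishings $h^0(TX(-\mathbf{a}))=0$, $h^0(\Oo_X(-\mathbf{i}))=h^1(\Oo_X(-\mathbf{i}))=0$ and $h^0(\Oo_D)=1$, which are immediate by K\"unneth since $s\geq 2$ and $\mathbf{a}\geq(1,\dots,1)$, so nothing is lost in the paper's range of application. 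Finally, the step you single out as the main obstacle, $\psi(g)=g\cup\partial_1(1)$, is indeed just naturality of the connecting homomorphism under multiplication by the global section $g$ of $\Oo_X(\mathbf{a}-\mathbf{i})$, so your argument closes.
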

\begin{proof}
Let $E\subset X$ be any divisor of type $\mathbf{a}-\mathbf{i}$. From the assumption and the standard exact sequence, the number $h^0(E, TX(-\mathbf{i})_{\vert_E})$ is constant over all such $E$. Let us consider a map from the sequence (\ref{equ2});
$$\phi_E : H^0(X, TX(-\mathbf{i})_{\vert_E}) \to H^0(E, \Oo_{D\cap E}(\mathbf{a}-\mathbf{i})).$$
We say that $E$ {\it jumps up} for $D$ or for $f$ if the rank of $\phi_E$ is smaller than the general rank. It is clear from the sequence (\ref{equ2}) that the equation of jumping $E$ for $D$ is in $J_i(f)$. So the assertion follows.
\end{proof}

\begin{remark}
By the Bott formula and the K\"unneth theorem, it is easily checked that the assumption $h^1(TX(-\mathbf{a}))=0$ is satisfied for all $\mathbf{a}\in \ZZ^{\oplus s}$ if $n_i\geq 3$ for all $i$. Similarly when $n_i\geq 2$ for all $i$, the assumption is satisfied if each $a_i$ is at least $3$.
\end{remark}

Now let us consider the case $s=2$ with each $n_i=1$, i.e. $X =Q= Q_2$.

The obvious pairs of $f$ of Sebastiani-Thom type \cite{uy} are the ones for
which there are coordinates $[x_0,x_1;y_0,y_1]$ such that $f = ux_0^ay_0^b + vx_1^ay_1^b$. The curve $D=V(f)$ is smooth if and only if
$uv\ne 0$. For smooth $D=V(f)$ and $D'=V(g)$ in $|\Oo_Q(a,b)|$, we have $TX(-\log D)\cong TX(-\log D')$ if $f= ux_0^ay_0^b + vx_1^ay_1^b$ and $g= u'x_0^ay_0^b + v'x_1^ay_1^b$ with non-zero $u,v,u',v'$. Indeed, the bundle $TQ(-\log D)$ is the sheafification of
$$D_0(- \log f)=\{\delta \in \mathrm{Der}_A ~|~ \delta f=0\}$$
where $A=K[x_0,x_1;y_0,y_1]$. If we denote $\delta$ by $\sum_i u_{xi}\partial / \partial x_i + u_{yi}\partial / \partial y_i$, then we have
$$\delta f=u(u_{x0}\frac{\partial}{\partial x_0}+u_{y_0}\frac{\partial}{\partial y_0})(x_0^ay_0^b)+v(u_{x1}\frac{\partial}{\partial x_1}+u_{y_1}\frac{\partial}{\partial y_1})(x_1^ay_1^b)$$
and so $\delta f=0$ is independent of choices of $(u,v)\in (K^{\times})^2$. In particular the map $D \mapsto TX(-\log D)$ is not Torelli.
Then we may ask if, up to linear change of coordinates, this is the only way
to have the same logarithmic bundles.

\begin{theorem}\label{mq}
Let $D=V(f)$ and $D'=V(g)$ be smooth elements in $|\Oo_Q(a,b)|$ with $(a,b)\gneq (2,2)$. Then we have
$\Omega _Q^1 (\log D) \cong \Omega _Q^1(\log D')$ if and only if $f = x_0^ay_0^b +x_1^ay_1^b$ and $g =  x_0^ay_0^b +\lambda x_1^ay_1^b$
with $\lambda \ne 0$ for some coordinates $[x_0, x_1;y_0,y_1]$ of $Q$.
\end{theorem}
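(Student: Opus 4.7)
The ``if'' direction is the observation made in the paragraph preceding the theorem: when $f,g$ are Sebastiani--Thom polynomials sharing the exponent pattern $ux_0^ay_0^b+vx_1^ay_1^b$ (all coefficients non-zero), the sheaf of logarithmic derivations $D_0(-\log f)$ is manifestly independent of the coefficients, so $TQ(-\log D)\cong TQ(-\log D')$ and dually $\Omega_Q^1(\log D)\cong\Omega_Q^1(\log D')$. For the converse the plan is to invoke Proposition~\ref{u1} to reduce the assumption to the coincidence of Jacobian spaces $J_i(f)=J_i(g)$ for $i=1,2$, and then to extract the Sebastiani--Thom form via Euler's identity and an eigenvalue analysis.

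Assume $\Omega_Q^1(\log D)\cong\Omega_Q^1(\log D')$. Proposition~\ref{u1} yields matrices $A,A'\in GL_2(\KK)$ with $\partial g/\partial x_i=\sum_j A_{ij}\,\partial f/\partial x_j$ and $\partial g/\partial y_i=\sum_j A'_{ij}\,\partial f/\partial y_j$; smoothness of $D$ ensures $f_{x_0},f_{x_1}$ are linearly independent (otherwise Euler's identity would exhibit a linear factor of $f$), so $A$ lies in $GL_2$, and similarly for $A'$. The key lemma is: for an eigenvector $v=(v_0,v_1)$ of $A$ with eigenvalue $\lambda$, the directional derivative $\partial_v:=v_0\partial_{x_0}+v_1\partial_{x_1}$ annihilates $g-\lambda f$. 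Since $\ker\partial_v\cap\KK[x_0,x_1]_a=\KK\cdot L^a$ with $L:=v_1x_0-v_0x_1$, and the coefficients of $g-\lambda f$ with respect to $x$-monomials lie in $\KK[y_0,y_1]_b$, we conclude
$$g-\lambda f=L^a\,p(y)\quad\text{for some }p\in\KK[y_0,y_1]_b,$$
and analogously for eigenvectors of $A'$ on the $y$-side.

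The Jordan form of $A$ over $\KK$ splits into three cases. If $A=cI$ is scalar, then $g_{x_i}=cf_{x_i}$ and Euler forces $g=cf$, so $D=D'$. If $A$ is a non-trivial Jordan block, combining the above with Euler produces $g-\lambda f=\tfrac{1}{a}x_0f_{x_1}$ in a suitable basis, which must equal a scalar multiple of $x_1^ap(y)$; this forces $f_{x_1}=0$ and hence $f=x_0^ah(y)$, making $V(f)$ non-reduced and contradicting smoothness. Thus $A$ has distinct eigenvalues $\lambda_1\ne\lambda_2$, and eliminating $g$ from the two equations $g-\lambda_if=L_i^ap_i(y)$ (with $L_1,L_2$ linearly independent linear forms in $x_0,x_1$) gives $f=\alpha L_1^ap_1(y)+\beta L_2^ap_2(y)$. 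A parallel analysis for $A'$ yields $f=\gamma M_1^bq_1(x)+\delta M_2^bq_2(x)$ with $M_1,M_2$ linearly independent linear forms in $y_0,y_1$. Choosing coordinates so that $L_i=x_i$ on the first factor and $M_j=y_j$ on the second (these changes commute since they act on disjoint factors of $Q$), the polynomial $f$ becomes pure in both sets of variables: $f=\sum_{i,j\in\{0,1\}}\alpha_{ij}x_i^ay_j^b$, and similarly $g=\sum_{i,j}\beta_{ij}x_i^ay_j^b$.

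In these coordinates the compatibility $\beta_{ij}=c_i\alpha_{ij}=d_j\alpha_{ij}$, with $c_0\ne c_1$ and $d_0\ne d_1$ being the scaling factors determined by the two pairs of eigenvalues, reads $(c_i-d_j)\alpha_{ij}=0$. Since $c_0\ne c_1$ and $d_0\ne d_1$, the non-zero $\alpha_{ij}$ must lie along a ``bijection'' pattern, i.e., either $\{(0,0),(1,1)\}$ (when $c_0=d_0,c_1=d_1$) or $\{(0,1),(1,0)\}$ (when $c_0=d_1,c_1=d_0$); fewer than two non-zero $\alpha_{ij}$ would make $f$ a single monomial $x_i^ay_j^b$, whose divisor is non-reduced. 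After swapping $y_0\leftrightarrow y_1$ if needed, we reach the diagonal case, and rescaling each of $x_i,y_j$ independently (which preserves the pure form) brings $f$ to $x_0^ay_0^b+x_1^ay_1^b$ and $g$ to $x_0^ay_0^b+\lambda x_1^ay_1^b$ with $\lambda=c_1/c_0\ne 0$. The main obstacle I anticipate is verifying the cohomological hypothesis $h^1(TQ(-a,-b))=0$ required by Proposition~\ref{u1}: it holds by K\"unneth for $(a,b)\ge(3,3)$, but the summand $\Oo_Q(2-a,-b)$ (or $\Oo_Q(-a,2-b)$) of $TQ(-a,-b)$ contributes positively to $H^1$ in the boundary bi-degrees $a=2$ or $b=2$, so extracting the coincidence $J_i(f)=J_i(g)$ in those ranges needs a more delicate jumping-divisor analysis that tracks the dimensions of $H^0(E,TX(-\mathbf{i})_{\vert E})$ on the special divisors $E$.
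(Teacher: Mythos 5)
Your reduction to $J_1(f)=J_1(g)$, $J_2(f)=J_2(g)$ via Proposition \ref{u1} is also the paper's first step, but after that you genuinely diverge: the paper picks a singular member $w=uf+vg$ of the pencil spanned by $D$ and $D'$, uses its singular point together with the Euler relation to show $J_i(w)\subsetneq J_i(f)$ for some $i$, normalizes $w=x_0^ay_0^b$ by coordinate changes, and reads the split form off $h=f-w$; you instead encode the equality of Jacobian spaces in transition matrices $A,A'\in GL_2(\KK)$ and extract the split form from their eigenstructure. In the case of distinct eigenvalues your argument is correct (up to the harmless bookkeeping point that the annihilating directions are eigenvectors of $A^{T}$, i.e.\ left eigenvectors of $A$), and the endgame --- purity of $f,g$ in both sets of variables, the matching condition $(c_i-d_j)\alpha_{ij}=0$, and the rescaling --- is clean; note also that the scalar case $A=cI$ gives $D=D'$, which you, like the paper (whose pencil only exists when $D\neq D'$), must simply exclude by hypothesis.

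However, your disposal of the Jordan-block case fails as written, and this is a concrete gap. If the Euler relation gives $g-\lambda f=\tfrac1a x_0f_{x_1}$, then in that same normalization one has $g_{x_1}=\lambda f_{x_1}$, so the direction annihilating $g-\lambda f$ is $\partial_{x_1}$ and your key lemma yields $g-\lambda f=x_0^a p(y)$, not a multiple of $x_1^ap(y)$; the two expressions are compatible (together they just say $f_{x_1}=ax_0^{a-1}p(y)$), so no contradiction arises and $f_{x_1}=0$ is not forced --- the clash you invoke comes from pairing the Euler term with the wrong invariant line. The case can still be excluded, but by a different argument: $f_{x_1}=ax_0^{a-1}p(y)$ integrates to $f=x_0^{a-1}\bigl(ax_1p(y)+x_0r(y)\bigr)$, so $x_0$ divides $f$; since $a\ge 2$, $D$ is then either non-reduced or the union of the ruling line $\{x_0=0\}$ with a curve of bidegree $(1,b)$ meeting it in $b\ge 2$ points, contradicting smoothness. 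The same patched analysis must also be carried out for $A'$. Finally, your closing worry about the bidegrees with $a=2$ or $b=2$ is factually correct ($h^1(TQ(-a,-b))\neq 0$ there), but it does not put you behind the paper: the published proof likewise just cites Proposition \ref{u1} for all $(a,b)\gneq(2,2)$ without comment, so either argument needs a separate verification of $J_i(f)=J_i(g)$ in those boundary bidegrees, or the theorem should in practice be used (as it is in Theorem \ref{u2}) only for $(a,b)\ge(3,3)$.
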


\begin{proof}
The ``~if ~'' part is obtained from the argument above.

Conversely let us start with some system of coordinates and then by Proposition \ref{u1} we have $J_1(f) =J_1(g)$ and $J_2(f) =J_2(g)$. In the pencil of $|\Oo _Q(a,b)|$ spanned by $D$ and $D'$
there is at least one singular curve, say $D''=V(w)$, because $\Oo _Q(a,b)$ is very ample. Writing $w=uf+vg$ with $u,v$ scalars, we have $u\ne 0$ and $v\ne 0$ since $D$ and $D'$ are smooth. Note that $J_1(w) \subseteq J_1(f)$ and $J_2(w)\subseteq J_2(f)$.

Let $P$ be a singular point of $D''$ and then all the partial derivatives of $w$ vanishes at $P$. Assume $J_i(w) =J_i(f)$ for all $i\in \{1,2\}$. It implies that all the partial derivatives of $f$ also vanish at $P$. Since $f$ is bihomogeneous, the Euler relation gives $P\in D$. Thus $P$ is a singular point of $D$, a contradiction. Hence there is an integer $i\in \{1,2\}$ such that $J_i(w) \ne J_i(f)$. Take for instance $i=1$ and it means
that $\partial w/\partial x_0$ and $\partial w/\partial x_1$ are not linearly independent. Up to a linear change of the coordinates $[x_0, x_1]$
we may assume $\partial w/\partial x_1 =0$ and so we have $w = x_0^ar(y_0,y_1)$ for some nonzero $r\in K[y_0,y_1]_b$. Since $J_2(w)$ is spanned by polynomials $x_0^a\partial r/\partial y_i$, $i=0,1$, not containing $x_1$, we have $J_2(w)\subsetneq J_2(f)$. As above we see
that, up to a linear change of the coordinates $[y_0, y_1]$ we may take $w= x_0^ay_0^b$.

Set $h:= f-w$. Since $J_1(w) \subset J_1(f)$, there are $e, e' \in K$
such that $ax_0^{a-1}y_0^b = e\partial f/\partial x_0 + e'\partial f/\partial x_1$. Since $D$ is smooth, $f$ is not divisible by $x_0$. Since $a\ge 2$, we get $e\ne 0$ and so there is a linear change of coordinates
$$x_0 \mapsto x_0 ~,~x_1 \mapsto e_1x_0+e'_1x_1$$
which does not change the formula for $w$. In these coordinates we have $\partial f/\partial x_0 = \partial w/\partial x_0$,
i.e. $\partial h/\partial x_0 =0$. Thus we may assume that $h=x_1^ad(y_0, y_1)$, up to multiplication by $x_1$. Since $u\ne 0$ and $v\ne 0$, we get the existence of $\lambda \in K\setminus \{0\}$ such that $f= x_0^ay_0^b + h$ and $g = x_0^ay_0^b + \lambda h$, up to the multiplication
of $g$ by a non-zero scalar. Since $J_2(w) \subset J_2(f)$, there are constants $s, s'$ such that $\partial w/\partial y_0 = s\partial f/\partial y_0 +s'\partial f/\partial y_1$.
Since $D$ is smooth, $f$ is not divisible by $y_0$. Since $b\ge 2$, we get $s \ne 0$ and so there is a linear change of coordinates
$$y_0\mapsto y_0~,~y_1\mapsto s_1y_0+s_1'y_1$$
which does not change the formula for $w$. Thus we have $\partial w/\partial y_0 = \partial f/\partial y_0$,
i.e. $\partial h/\partial y_0=0$ and so we have $d(y_0,y_1) = cy_1^b$ for some $c\in K$. Since $D$ is smooth, we have $c\ne 0$. Taking $c_1y_1$ instead of $y_1$ for $c_1\in K$ such that $c_1^b =c$, we get $f = x_0^ay_0^b +x_1^ay_1^b$ and $g =  x_0^ay_0^b +\lambda x_1^ay_1^b$
\end{proof}

\begin{remark}
We call equations of the form $ux_0^ay_0^b+vx_1^ay_1^b$ for some $u,v\in K$ {\it equations of split type}.
\end{remark}

\begin{theorem}\label{u2}
For $X=\PP^{n_1}\times \cdots \times \PP^{n_s}$ with $s\geq 2$, the map
$$D \mapsto TX(-\log D)$$
is generically injective for $D\in |\Oo_X(\mathbf{a})|$ with $\mathbf{a}\geq (3,\ldots, 3)$.
\end{theorem}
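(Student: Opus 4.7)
The plan is to invoke Proposition \ref{u1} to reduce the bundle isomorphism to equality of Jacobian spaces, and then argue that for generic $f$ the tuple $(J_1(f),\ldots,J_s(f))$ determines $f$ up to scalar. First I verify the cohomological hypothesis $h^1(TX(-\mathbf{a}))=0$ needed by Proposition \ref{u1}. Using the decomposition $TX=\bigoplus_{i=1}^s p_i^\ast T\PP^{n_i}$ together with the K\"unneth formula applied to each summand $p_i^\ast T\PP^{n_i}(-a_i)\otimes\bigotimes_{j\ne i}p_j^\ast\Oo_{\PP^{n_j}}(-a_j)$, Bott's vanishing shows that no degree-one K\"unneth term can be non-zero once $a_i\ge 2$ for all $i$: the only potential contributions require either $h^0(T\PP^{n_i}(-a_i))$ or $h^0(\Oo_{\PP^{n_j}}(-a_j))$ to be non-zero, and both vanish under our hypothesis $\mathbf{a}\ge(3,\ldots,3)$. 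This extends the remark after Proposition \ref{u1} to the full generality of the statement.

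With Proposition \ref{u1} at hand, suppose $D_1=V(f_1)$ and $D_2=V(f_2)$ are smooth divisors with $TX(-\log D_1)\cong TX(-\log D_2)$; then $J_i(f_1)=J_i(f_2)$ for all $1\le i\le s$. I adapt the pencil argument of Theorem \ref{mq}: consider $\{uf_1+vf_2:[u:v]\in\PP^1\}$; since $\Oo_X(\mathbf{a})$ is very ample, the pencil contains at least one singular member $h=u_0f_1+v_0f_2$. Because $J_i(h)\subseteq J_i(f_1)$ for all $i$, if the equality $J_i(h)=J_i(f_1)$ held for every $i$, then any singular point $P$ of $V(h)$ would force every $\partial f_1/\partial x_{ij}$ to vanish at $P$, and the Euler relation $a_if_1=\sum_j x_{ij}\partial f_1/\partial x_{ij}$ would then give $f_1(P)=0$, contradicting the smoothness of $V(f_1)$. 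Therefore $J_i(h)\subsetneq J_i(f_1)$ for some $i$; up to a linear change of the $\mathbf{x}_i$-variables this means that $h$ does not involve one coordinate $x_{in_i}$, so $h$ is pulled back from the projection forgetting that variable.

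The main obstacle is the final step: showing that for generic $f_1$, the pencil spanned by $f_1$ and any non-proportional $f_2$ with $J_i(f_1)=J_i(f_2)$ for all $i$ cannot contain such a degenerate member. I would iterate the above argument: the drop of $J_i(h)$ can be analyzed for each $i$ (possibly at different singular members of the pencil), and a careful bookkeeping using $J_i(f_1)=J_i(f_2)$ forces $f_1$ into a Sebastiani--Thom-like split form such as $\sum_k\lambda_k\prod_{i=1}^s x_{i\,j_i(k)}^{a_i}$, which cuts out a proper closed subvariety of $|\Oo_X(\mathbf{a})|$; hence a generic $f_1$ admits no such partner $f_2$. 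An alternative and perhaps cleaner route is to set up the incidence variety $\Gamma=\{(f,g)\in|\Oo_X(\mathbf{a})|^2 : J_i(f)=J_i(g)\text{ for all }i\}$ and to show via a tangent-space computation that $\Gamma$ coincides with the scaling diagonal $\{(f,cf)\}$ near a generic point of the true diagonal. Either way, the hypothesis $\mathbf{a}\ge(3,\ldots,3)$ is used to guarantee enough rigidity of the Jacobian spaces to close the argument.
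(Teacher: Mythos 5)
Your first two steps are sound and agree with the paper: the K\"unneth/Bott verification that $h^1(TX(-\mathbf{a}))=0$ when $\mathbf{a}\geq (3,\ldots,3)$ is correct (be careful only with your parenthetical claim that $a_i\geq 2$ suffices: if some $n_i=1$ then $H^0(T\PP^1(-2))\otimes H^1(\Oo_{\PP^1}(-a_k))$ can survive, e.g. $h^1(T(\PP^1\times\PP^1)(-2,-2))\neq 0$; with $a_i\geq 3$ all terms die, which is what you need), and Proposition \ref{u1} then gives $J_i(f_1)=J_i(f_2)$ for all $i$. The Euler-relation argument producing a singular pencil member $h$ with $J_i(h)\subsetneq J_i(f_1)$ for some $i$, hence (after a linear change in the $i$-th block) $\partial h/\partial x_{in_i}=0$, is also fine, since smoothness of $D_1$ with $a_i\geq 2$ forces $\dim J_i(f_1)=n_i+1$.

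The genuine gap is exactly where you flag it: the final step is not a proof but two unexecuted sketches. Knowing that one singular member $h$ of the pencil omits a variable in one block only tells you $h$ is a cone in that block; the bookkeeping that in the paper's Theorem \ref{mq} pins $h$ down to the monomial $x_0^ay_0^b$ and then successively normalizes $f$ and $g$ depends essentially on each block having just two variables, and no mechanism is given to iterate it when $n_i\geq 2$ or $s\geq 3$. Moreover, even if you could force $f_1$ into a split form, you would still need two further arguments you do not supply: that the split locus is a proper closed subvariety (so generic $f_1$ avoids it), and that for $f_1$ outside it the equalities $J_i(f_1)=J_i(f_2)$ imply $V(f_1)=V(f_2)$. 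The incidence-variety/tangent-space alternative is likewise only named. The paper closes this gap by a different and much shorter device that your proposal never uses: restriction to fibers. When some $n_i\geq 2$, it restricts $f$ and $g$ to a general slice $\PP^{n_1}\times\{P\}$, notes $J(f_P)=J(g_P)$ (a consequence of $J_1(f)=J_1(g)$), and invokes the single-projective-space result of \cite{uy} after a dimension count showing that a general polynomial of degree $d\geq 3$ is not of Sebastiani--Thom type; when all $n_i=1$, it restricts to slices $\PP^1\times\PP^1\times\{b\}$ and applies Theorem \ref{mq}, which is precisely the only case where the pencil argument you are trying to generalize has been carried out. Your steps 1--2 plus this fiberwise reduction would complete the proof; as written, the argument is incomplete at its essential step.
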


\begin{proof}
First let us assume that there exists $i$ such that $n_i \geq 2$, say $n_1$.
For a fixed generic divisor $D = V(f)$, let us assume the existence of a smooth $D' = V(g)$ with $TX (-\log D) \cong TX (-\log D')$ and then we have $J_i(f) =J_i(g)$ for each $i$ by Proposition \ref{u1}.

For a general point $P \in \PP^{n_2} \times \cdots \times \PP^{n_s}$, let us define $X_P:= \PP^{n_1}\times \{P\}$. Since our base field has characteristic zero, the restrictions of the projection $X\to \PP^{n_1}$ to $D$ and $D'$ give us that $D_P:= D\cap X_P$ and $D'_P:= D'\cap X_P$ are smooth due to the general smoothness.
Note that they have homogeneous equations $f_P(x_{10},\dots ,x_{1n_1}):= f(x_{10},\dots ,x_{1n_1},P)$ and $g_P(x_{10},\dots ,x_{1n_1}):=g(x_{10},\dots ,x_{1n_1},P)$ and that $J(f_P)= J(g_P)$ since $J_1(f)=J_1(g)$. From the generality of $f$ and $g$ together with the point $P$, we obtain that $f_P$ and $g_P$ are general polynomials of degree $d$ at least $3$. Note that a general polynomial is not of Sebastiani-Thom type. Indeed, by separating $n+1$ variables $x_0, \cdots, x_n$ divided into two parts, say $x_0, \ldots, x_l$ and $x_{l+1}, \ldots, x_n$, the dimension of Sebastiani-Thom type polynomials of degree $d\geq 3$ can be computed to be
$$\dim \mathrm{PGL}_{n+1}+ {l+d \choose d} + {n-l-1+d \choose d}.$$
This number is less than ${n+d\choose d}$, the dimension of homogeneous polynomials of degree $d$. Thus we have $V(f_P)=V(g_P)$ and so we have $D=D'$ since $P$ is general.

Now assume that each $n_i=1$. The case $s=2$ is derived from Theorem \ref{mq}.
Assuming $s\geq 3$, let us denote $Y=\PP^{n_3}\times \cdots \times \PP^{n_s}$. For a divisor $D=V(f)$ of $X$ and a general point $b\in Y$, the divisor $D_b$ of $\PP^1 \times \PP^1$ is defined to be the intersection of $D$ with $\PP^1 \times \PP^1 \times \{b\}$, i.e. we have $D_b=V(f_b)$ where $f_b$ is the bihomogeneous equation, evaluated at $b$. If we have $TX(-\log D)\cong TX(-\log D')$ for another general divisor $D'$ of $X$ with $D'=V(f')$, we have $J_i(f_b)=J_i(f'_b)$. Again by Theorem \ref{mq} it contradicts to the generality of $D,D'$.
\end{proof}

\providecommand{\bysame}{\leavevmode\hbox to3em{\hrulefill}\thinspace}
\providecommand{\MR}{\relax\ifhmode\unskip\space\fi MR }
% \MRhref is called by the amsart/book/proc definition of \MR.
\providecommand{\MRhref}[2]{%
  \href{http://www.ams.org/mathscinet-getitem?mr=#1}{#2}
}
\providecommand{\href}[2]{#2}

\end{document}